
\documentclass{daj}

\dajAUTHORdetails{%
  title = {A Geometric Simulation Theorem on Direct Products of Finitely Generated Groups}, %
  author = {Sebasti\'an Barbieri},
  plaintextauthor = {Sebastian Barbieri},
  keywords = {symbolic dynamics, effectively closed dynamical systems, simulation theorem, strongly aperiodic SFTs, branch groups.},
}   %

\dajEDITORdetails{%
   year={2019},
   number={9},
   received={30 July 2018},   %
   published={11 June 2019},  %
   doi={10.19086/da.8820},       %
}   %

\usepackage{graphicx,xcolor}
\usepackage{amsmath,amsthm}
\usepackage{amssymb}
\usepackage{float}
\usepackage[english]{babel}
\usepackage{cleveref}
\usepackage{pgfplots}

\theoremstyle{plain}
\newtheorem{theorem}{Theorem}[section]
\newtheorem{lemma}[theorem]{Lemma}
\newtheorem{corollary}[theorem]{Corollary}
\newtheorem{proposition}[theorem]{Proposition}
\newtheorem{definition}[theorem]{Definition}

\newtheorem{claim}[theorem]{Claim}

\theoremstyle{definition}
\newtheorem{remark}[theorem]{Remark}

\def\NN{\mathbb{N}}
\def\ZZ{\mathbb{Z}}

\def\ag{\mathcal{A}}

\def\CC{\mathcal{C}}

\def\FF{\mathcal{F}}
\def\Orb{\text{Orb}}
\def\Aut{\text{Aut}}

\def\supp{\operatorname{supp}}

\def\Topz{\texttt{Top}_{\texttt{1D}}}
\def\Grid{\texttt{Grid}}
\def\Topzz{\texttt{Top}_{\texttt{2D}}}

\def\Final{\texttt{Final}}

\newcommand{\define}[1]{\emph{#1}}
\newcommand{\isdef}{\triangleq}	

\usepackage{tikz}
\usetikzlibrary{patterns,positioning,arrows,decorations.markings,calc,decorations.pathmorphing,decorations.pathreplacing}
\usetikzlibrary{calc,intersections,through,backgrounds}
\usetikzlibrary{lindenmayersystems}
\usetikzlibrary{arrows}
\usepackage{tikz-3dplot}

\tikzstyle directedgreen=[postaction={decorate,decoration={markings,
		mark=at position .65 with {\arrow[scale=1]{latex}}}}]
\tikzstyle directedred=[postaction={decorate,decoration={markings,
		mark=at position .55 with {\arrow[scale=1]{latex}}}}]

\tdplotsetmaincoords{85}{97}

\definecolor{rouge}{RGB}{255,77,77}
\definecolor{vert}{RGB}{0,178,102}
\definecolor{jaune}{RGB}{255,255,0}
\definecolor{violet}{RGB}{208,32,144}
\definecolor{orange}{RGB}{255,140,0}
\definecolor{bleu}{RGB}{0,0,205}

\begin{document}

\begin{frontmatter}[classification=text]
\author[pgom]{Sebasti\'an Barbieri\thanks{Supported by the ANR project CoCoGro (ANR-16-CE40-0005)}}

\begin{abstract}
We show that every effectively closed action of a finitely generated group $G$ on a closed subset of $\{0,1\}^{\NN}$ can be obtained as a topological factor of the $G$-subaction of a $(G \times H_1 \times H_2)$-subshift of finite type (SFT) for any choice of infinite and finitely generated groups $H_1,H_2$. As a consequence, we obtain that every group of the form $G_1 \times G_2 \times G_3$ admits a non-empty strongly aperiodic SFT subject to the condition that each $G_i$ is finitely generated and has decidable word problem. As a corollary of this last result we prove the existence of non-empty strongly aperiodic SFT in a large class of branch groups, notably including the Grigorchuk group.
\end{abstract}
\end{frontmatter}

\section{Introduction}
An interesting approach when studying dynamical systems under the scope of computer science is to restrict to classes that can be described using a finite amount of information. Under this scope, a reasonably large and natural class is that of effectively closed systems. Informally speaking, effectively closed systems are those where both the configurations in the system and the group action can be described completely by a Turing machine. Even though these systems admit finite presentations, they remain quite complicated.

A natural question is whether an effectively closed system can be obtained as a subaction of another system which admits a simpler description. This question is motivated by the following: consider the class of subshifts of finite type (SFT), that is, the sets of colourings of a group which respect a finite number of local constraints --in the form of a finite list of forbidden patterns-- and are equipped with the shift action. It can be easily shown that any system obtained as a restriction of the shift action to a subgroup is not necessarily an SFT, but under weak assumptions, such as the groups being recursively presented, we obtain that the subaction is still an effectively closed dynamical system, see~\cite{Hochman2009b} for the $\ZZ^d$ case.

The previous situation has an analogue in the case of groups. It is known that any subgroup of a finitely presented group is recursively presented. A non-trivial theorem by Higman~\cite{Highman1961} shows that every recursively presented group can be embedded into a finitely presented group. This raises the question if it is possible to somehow ``embed'' an effectively closed dynamical system into a simpler class of systems, such as SFTs or sofic subshifts. We shall refer to such an embedding result as a \emph{simulation theorem}.

For the class of $\ZZ^d$-SFTs there is still no characterization of which dynamical systems can arise as their subactions, there are in fact some effective $\ZZ$-dynamical systems that cannot appear as a subaction of a $\ZZ^d$-SFT, or more generally, of any shift space, see Hochman~\cite{Hochman2009b}. Nevertheless, in that same article, it is proven that every effectively closed $\ZZ^d$-action on a closed subset $X \subset \{0,1\}^{\NN}$ admits an almost trivial isometric extension which can be realized as the subaction of a $\ZZ^{d+2}$-SFT. This simulation theorem has subsequently been improved for the expansive case independently in \cite{AubrunSablik2010} and \cite{DurandRomashchenkoShen2010} showing that every effectively closed $\ZZ$-subshift can in fact be obtained as the projective subdynamics of a sofic $\ZZ^2$-subshift. 

Simulation theorems are powerful tools to prove properties about the original systems. A notorious example is the characterization of the set of entropies of $\ZZ^2$-SFTs as the set of right recursively enumerable numbers~\cite{HochmanMeyerovitch2010}. More recently, in an article of Sablik and the author~\cite{BarbieriSablik2017}, Hochman's simulation theorem was extended to groups which are of the form $G = \ZZ^d \rtimes H$ for $d > 1$. More specifically, it was shown that for every finitely generated group $H$, homomorphism $\varphi\colon H \to \Aut(\ZZ^d)$ and effectively closed $H$-dynamical system $(X,T)$ one can construct a $(\ZZ^d \rtimes_{\varphi}  H)$-SFT whose $H$-subaction is an extension of $(X,T)$. As a consequence of this result, we showed that groups of the form $\ZZ^d \rtimes_{\varphi} H$ admit strongly aperiodic SFTs whenever the word problem of $H$ is decidable.

All of the previous simulation theorems have as common denominator the employment of a $\ZZ$ or $\ZZ^2$ component to simulate space-time diagrams of Turing machines. A natural question would be to ask whether it is possible to obtain a simulation theorem which involves exclusively periodic groups, that is, groups for which $\ZZ$ does not embed.

The purpose of this article is to prove a simulation theorem which does not necessarily involve a $\ZZ$ component. More precisely,

{
	\renewcommand{\thetheorem}{\ref{simulationTHEOREM}}
	\begin{theorem}
		Let $G$ be a finitely generated group and $(X,T)$ an effectively closed $G$-dynamical system. For every pair of infinite and finitely generated groups $H_1,H_2$ there exists a $(G \times H_1 \times H_2)$-SFT whose $G$-subaction is an extension of $(X,T)$.
	\end{theorem}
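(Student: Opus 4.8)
The plan is to realize the target $(G\times H_1\times H_2)$-SFT as a superposition of several layers, each responsible for one task, following the philosophy of the grid and Turing-machine simulations of Hochman and of Durand--Romashchenko--Shen, but replacing the ambient $\ZZ^2$ by the pair of infinite groups $H_1,H_2$. First I would recall the standard reduction: since $(X,T)$ is effectively closed, there is a Turing machine that, reading the $G$-indexed $\{0,1\}$-content of a configuration, semi-decides the complement of $X$, i.e.\ it halts and rejects exactly when it detects a pattern forbidden in $X$ (or an inconsistency with the computable action $T$). The whole SFT will then superimpose a layer carrying a candidate point of $\{0,1\}^{\NN}$ together with the combinatorial data of the $G$-action, and a computation substrate that runs this machine and produces a local contradiction whenever it rejects. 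Surviving configurations then project equivariantly and surjectively onto $X$, giving the claimed extension; producing the substrate purely from $H_1$ and $H_2$ is the crux, since these groups may be periodic and contain no copy of $\ZZ$ along which to lay a space-time diagram.

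The first substrate layer $\Topz$ I would build on a single infinite finitely generated group, say $H_1$. Using $\ingoing$/$\outgoing$ markings on the generators, the local rules force every vertex of the Cayley graph onto a directed path: each cell points to a unique successor and admits a controlled set of predecessors, with immediate inconsistencies ruled out as forbidden patterns. The essential use of infiniteness of $H_1$ is that this subshift is non-empty and, moreover, admits configurations containing genuinely unbounded directed rays, so that a ``time axis'' is available even though no explicit $\ZZ$-subgroup is. I would run the same construction on $H_2$ to obtain a transversal line structure, and then let the $\Grid$ layer combine the two one-dimensional structures across the product $H_1\times H_2$: the $H_1$- and $H_2$-rays are forced to cross coherently, carving out of each sheet $\{g\}\times H_1\times H_2$ a two-dimensional grid on which discrete coordinates can be read locally.

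On this grid the layers $\Topzz$ and $\Tope$ encode the space-time diagram of the verifying machine exactly as over $\ZZ^2$: one grid direction plays the role of the tape and the other that of successive computation steps, the transition function being written as local rules. The $G$-direction feeds the machine its input: a synchronization layer forces the $\{0,1\}$-content to be constant along $H_1\times H_2$ and to vary along $G$ in accordance with $T$, so that the computation carried out on every sheet reads one and the same point of $X$ together with its $G$-translates. The SFT $\Final$ is the intersection of all these layers with the single extra rule forbidding a rejecting halting state. Because $X$ is only co-recursively-enumerable I would never require the machine to halt on points of $X$; the SFT merely forbids it from entering a rejecting state, and this asymmetry is what makes the reject-on-forbidden-pattern semantics compatible with a finite rule set.

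The main obstacle is the construction and control of the grid from groups that contain no $\ZZ$-direction. In a periodic group, following the directed edges of $\Topz$ can close up into finite cycles, so the grids carved out in individual sheets may be finite and of bounded size, leaving large forbidden patterns unchecked. Overcoming this requires two things at once: using the infiniteness of $H_1$ and $H_2$ to guarantee that admissible grids of unbounded size do appear, and arranging the verification so that every candidate forbidden pattern is eventually tested on some sufficiently large grid --- presumably the role of the completed layer $\Tope$, which must repair the behaviour at the boundaries of finite grids and organize the checks across scales. One must then establish that the scheme is both \emph{sound} and \emph{complete}: no spurious local configuration may fake an accepting run, and whenever the $G$-content leaves $X$ some grid genuinely detects a forbidden pattern and triggers the contradiction. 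Granting this, the final verification is routine: reading off the $\{0,1\}$-layer yields a continuous, $G$-equivariant, surjective factor map from the $G$-subaction of $\Final$ onto $(X,T)$.
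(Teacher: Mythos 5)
There is a genuine gap, and it sits exactly where you yourself place the crux. Your plan runs the space-time diagram of the verifying Turing machine directly on the grids simulated inside $H_1\times H_2$, and you concede that the pointer layers may close up into finite cycles, so that the carved-out grids can be finite tori on which a locally consistent, never-rejecting ``computation'' can sit without ever checking anything; the repair you invoke (a layer $\Tope$ that fixes the boundaries of finite grids and ``organizes the checks across scales'') is never constructed, and soundness and completeness of the whole scheme are precisely what you end up assuming (``Granting this\dots''). The paper avoids this problem rather than solving it: no machine is ever run on the grids. All computation is black-boxed inside the Aubrun--Sablik / Durand--Romashchenko--Shen simulation theorem, which converts the effectively closed $\ZZ$-subshift $\Topz(X,T)$ (a Toeplitz encoding carrying, for each generator $s\in S$, both $x$ and $T^s x$) into a sofic $\ZZ^2$-subshift $\Topzz(X,T)$; one then takes a \emph{nearest neighbour} SFT extension $Y$ of it and embeds $Y$ into the grid layer. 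Because $Y$ is nearest neighbour, the configuration read off along \emph{any} simulated grid --- free or cyclic --- avoids all forbidden patterns of $Y$ and is therefore a genuine point of $Y$, hence codes a genuine point of $X$. Soundness is thus automatic even on degenerate grids, and freeness is only needed for non-emptiness and surjectivity of the factor map. Your approach would need an analogous device (for instance, forcing freeness of every grid by also embedding a strongly aperiodic nearest neighbour $\ZZ^2$-SFT such as a recoded Robinson tiling); it does not come for free.

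A second, related gap: you assert that infiniteness of $H_1$ by itself guarantees that the pointer subshift is non-empty \emph{and} admits configurations with genuinely unbounded directed rays. For a fixed generating set this is unjustified (finite cycles always give non-emptiness, but unbounded orbits are another matter), and what the construction actually requires is stronger: a configuration whose induced $\ZZ$-action is \emph{free}, i.e.\ a partition of the Cayley graph into bi-infinite paths, since surjectivity onto $X$ needs every point of $X$ to be writable on a genuine copy of $\ZZ^2$. This is exactly where the paper invokes Seward's theorem on translation-like $\ZZ$-actions, and crucially the generating sets $S_1,S_2$ are \emph{chosen} afterwards, large enough to contain all displacements of the translation-like action, so that it becomes a nearest-neighbour pointer configuration. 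Finally, your synchronization layer ``forcing the $\{0,1\}$-content to be constant along $H_1\times H_2$'' cannot be imposed by local rules once the coded point is spread over a grid at all scales; the paper deliberately does not impose it (different grids in one coset may code different orbit points, which is harmless for an extension result), and the consistency ``along $G$ in accordance with $T$'' is instead reduced to a finite local condition by comparing Toeplitz components across cosets, which is the role of its third family of forbidden patterns.
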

	\addtocounter{theorem}{-1}
}

This result is obtained through the combination of two different techniques already present in the literature. On the one hand, we use Toeplitz configurations to encode the dynamical system $(X,T)$ in an effectively closed $\ZZ$-subshift and then extend this object to a $\ZZ^2$-sofic subshift through the theorems of~\cite{AubrunSablik2010,DurandRomashchenkoShen2010}. This is the main idea employed in~\cite{BarbieriSablik2017}. On the other hand, we make use of a technique by Jeandel~\cite{Jeandel2015_translation} to force grid structures through local rules. Namely, a theorem by Seward~\cite{Seward2014} shows that a geometric analogue of Burnside's conjecture holds, namely, that every infinite and finitely generated group admits a translation-like action by $\ZZ$. From a graph theoretical perspective, this means that the group admits a set of generators such that its associated Cayley graph can be covered by disjoint bi-infinite paths. We use that theorem and Jeandel's technique to geometrically embed a two-dimensional grid into $H_1 \times H_2$ and create the necessary structure to prove our main result.

In the case where the $G$-dynamical system is expansive, we can give a stronger result.

{
	\renewcommand{\thetheorem}{\ref{theorem_expansive_case}}
	\begin{theorem}
		Let $G$ be a recursively presented and finitely generated group and $Y$ an effectively closed $G$-subshift. For every pair of infinite and finitely generated groups $H_1,H_2$ there exists a sofic $(G \times H_1 \times H_2)$-subshift $X$ such that
		\begin{itemize}
			\item the $G$-subaction of $X$ is conjugate to $Y$.
			\item the $G$-projective subdynamics of $X$ is $Y$.
			\item The shift action $\sigma$ restricted to $H_1 \times H_2$ is trivial on $X$.
		\end{itemize}
	\end{theorem}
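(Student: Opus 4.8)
The plan is to derive Theorem \ref{theorem_expansive_case} as a refinement of the main simulation theorem (Theorem \ref{simulationTHEOREM}), exploiting the extra hypotheses — that $G$ is recursively presented and that the target system $Y$ is an effectively closed \emph{subshift} rather than an arbitrary effectively closed action. The first observation I would make is that when $Y$ is a subshift, it already lives inside a full shift $A^G$ over a finite alphabet $A$, so the configurations are \emph{expansive} and the $G$-action is literally the shift. This means that instead of merely producing an \emph{extension} of $(X,T)$ (where the fibre may collapse dynamical information in a lossy, isometric way), I should be able to realize $Y$ itself on the nose as a subaction. The guiding idea is that the main theorem's general construction produces a factor map $\pi$ from the $G$-subaction onto $Y$; the goal here is to arrange that this factor map becomes a conjugacy by building the realizing SFT $X$ so that every configuration carries, in a local and shift-compatible way, an exact copy of the $Y$-configuration it projects to.

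\medskip
\noindent\textbf{Step 1: Encode $Y$ expansively along $G$.} I would take the sofic $(G\times H_1\times H_2)$-subshift produced by the main construction and add a component, over the alphabet $A$ of $Y$, whose value at each vertex records the intended symbol of the simulated $Y$-configuration at the corresponding $G$-coordinate. The local SFT rules should (i) force this $A$-labelled layer to be constant along the $H_1\times H_2$-directions — this is exactly the third bullet, that the $H_1\times H_2$-shift acts trivially — and (ii) force it to agree with the data being fed into the Toeplitz/effective-subshift machinery that checks membership in $Y$. Because $Y$ is effectively closed and $G$ is recursively presented, the verification that a $G$-indexed configuration lies in $Y$ is semi-decidable, which is precisely what the grid-simulation apparatus (via \cite{AubrunSablik2010,DurandRomashchenkoShen2010} and Jeandel's grid-forcing through the translation-like $\ZZ$-actions from \cite{Seward2014}) is designed to enforce through local rules.

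\medskip
\noindent\textbf{Step 2: Upgrade factor to conjugacy and identify the projective subdynamics.} With the $A$-layer constant in the $H_1,H_2$ directions and constrained to lie in $Y$, the $G$-subaction of $X$ maps onto $Y$ by reading off this layer; conversely, since every $y \in Y$ passes the local checks, one can complete it to a valid configuration of $X$, giving surjectivity, and the constancy plus expansivity make the map injective on the $G$-subaction, yielding the conjugacy claimed in the first bullet. The second bullet — that the $G$-projective subdynamics of $X$ is exactly $Y$ — then follows because projecting any configuration of $X$ to its $G$-coordinates and reading the $A$-layer lands in $Y$, and every element of $Y$ is so obtained; one must check that no \emph{additional} configurations sneak into the projective subdynamics, which is guaranteed by the local rules tying the $A$-layer rigidly to the $G$-direction.

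\medskip
\noindent\textbf{Main obstacle.} I expect the crux to be Step 1's compatibility requirement: reconciling the $H_1\times H_2$-constancy of the $A$-layer with the grid structure that the simulation uses to run the Turing-machine verification. The grid is embedded geometrically into $H_1\times H_2$ via translation-like actions, and the computation sweeps across this grid, so I must ensure the information being verified (the $Y$-content along $G$) is available uniformly at every grid site without the constancy constraint destroying the ability to run the check, or conversely without the computation overwriting the constant layer. Threading this needle — making the expansive $G$-data simultaneously rigid along $H_1\times H_2$ and legible to the distributed verifier — is where the real work lies, though the sofic (rather than SFT) conclusion affords the freedom of an extra hidden layer that should make the bookkeeping feasible.
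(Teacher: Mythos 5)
Your high-level strategy---attach an extra $A$-valued layer recording the simulated $Y$-symbol, force it constant along the $H_1\times H_2$ directions, tie it by local rules to the simulation machinery, and rely on soficity to hide the machinery---is essentially the paper's construction (realized there by the extra forbidden patterns $\FF_4$ and the read-off map $\hat{\varphi}$). However, two steps have genuine gaps. First, you invoke \Cref{simulationTHEOREM} as if it applied to $Y$ directly, but that theorem takes as input an effectively closed \emph{action on a closed subset of} $\{0,1\}^{\NN}$, not a subshift over a finite alphabet indexed by $G$. The paper must first conjugate $(Y,\sigma)$ to a system $(Z,T)$ with $Z\subset\{0,1\}^{\NN}$, by concatenating binary codings of the symbols $y(w)$ along a recursive enumeration of $S^*$, and then prove that $(Z,T)$ is an effectively closed action; this is exactly where recursive presentability of $G$ does real work (one must semi-decide when two words represent the same group element in order to reject inconsistent codings, and convert pattern codings of $\FF_Y$ into constraints on binary prefixes). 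Your proposal never performs this reduction, so the main theorem cannot even be applied.

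Second, your Step 2 is incorrect for the object you define. If $X$ contains the simulation machinery (the grid layer $\omega$, the Toeplitz layers, and the hidden layers of the sofic-to-SFT lift), then the map reading off the $A$-layer has enormous fibers---the same $y\in Y$ is compatible with many grid structures and many alignments of the coded configurations---so ``constancy plus expansivity'' cannot make it injective, and moreover $\sigma$ restricted to $H_1\times H_2$ is \emph{not} trivial on such an $X$, since it moves the machinery. Both the conjugacy bullet and the triviality bullet hold only for the \emph{image} of the read-off map, after the machinery is projected away; this is precisely why the paper defines $X \isdef \hat{\varphi}(\widehat{\Final}(Z,T))$ and why $X$ is sofic rather than of finite type. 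You gesture at this with the ``hidden layer'' remark, but your explicit injectivity argument is attached to the wrong object, and expansivity is not what repairs it. Finally, the ``main obstacle'' you leave open is resolved by a point you do not exploit: by the Toeplitz encoding, the single symbol $y(1_G)$ is determined by the first $\kappa$ coded bits, and each decoding function $\gamma_m$, $m<\kappa$, is computable from a bounded window of any simulated grid; hence \emph{finitely many} forbidden patterns ($\FF_4$) suffice to synchronize exactly this one symbol across all grids of a coset, the rest of $y$ being recovered from the other $G$-cosets by equivariance. Synchronizing only finitely many coordinates is what keeps the constraint of finite type, and it is the precise point where expansivity of $Y$ (as opposed to a general effectively closed action) is used.
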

	\addtocounter{theorem}{-1}
}

It is known that every non-empty $\ZZ$-SFT contains a periodic configuration~\cite{lind1995introduction}. More generally, any SFT defined over a finitely generated free group also has a periodic configuration~\cite{Piantadosi2008}. However, it was shown by Berger~\cite{Berger1966} that there are $\ZZ^2$-SFTs which are strongly aperiodic, that is, such that the shift acts freely on the set of configurations. This result has been proven several times with different techniques~\cite{Robinson1971,Kari1996259,JeandelRao11Wang} giving a variety of constructions. However, the problem of determining which is the class of finitely generated groups which admit strongly aperiodic SFTs remains open. Amongst the groups that do admit strongly aperiodic SFTs are: $\ZZ^d$ for $d > 1$, hyperbolic surface groups~\cite{CohenGoodmanS2015} and more generally one-ended word-hyperbolic groups~\cite{CohenGoodmanRieck2017}, Osin and Ivanov monster groups~\cite{Jeandel2015}, the direct product $G \times \ZZ$ for a particular class of groups $G$ which includes Thompson's group $T$ and $\text{PSL}(2,\ZZ)$~\cite{Jeandel2015} and groups of the form $\ZZ^d \rtimes G$ where $d >1$ and $G$ is a finitely generated group with decidable word problem~\cite{BarbieriSablik2017}. It is also known that no group with two or more ends can support strongly aperiodic SFTs~\cite{Cohen2014} and that recursively presented groups which admit strongly aperiodic SFTs must have decidable word problem~\cite{Jeandel2015}. 

As an application of \Cref{theorem_expansive_case} we present a new class of groups which admit strongly aperiodic SFTs. 

{
	\renewcommand{\thetheorem}{\ref{corollaryaperioci}}
	\begin{theorem}
		For any triple of infinite and finitely generated groups $G_1,G_2$ and $G_3$ with decidable word problem, their direct product $G_1 \times G_2 \times G_3$ admits a non-empty strongly aperiodic subshift of finite type.
	\end{theorem}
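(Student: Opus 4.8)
The plan is to realize the three factors asymmetrically: the group $G_1$ will carry a \emph{dynamical} aperiodicity transported by the simulation machinery, while the pair $G_2 \times G_3$ will carry a \emph{geometric}, Robinson-type aperiodicity supported on the two-dimensional grid that the construction forces inside $H_1 \times H_2$. Concretely, I would set $G := G_1$, $H_1 := G_2$ and $H_2 := G_3$; since each factor has decidable word problem, $G_1$ is in particular recursively presented and finitely generated, and $G_2, G_3$ are infinite finitely generated groups, so the hypotheses of \Cref{theorem_expansive_case} and of the construction behind it are met. Writing $\Gamma = G_1 \times G_2 \times G_3$, strong aperiodicity of the $\Gamma$-SFT to be produced amounts to showing that the shift action of $\Gamma$ is free, and I would prove this by splitting a hypothetical stabilizing element $(g_1,g_2,g_3)$ into the case $g_1 \neq e$ and the case $g_1 = e$, $(g_2,g_3) \neq (e,e)$.

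First I would fix a non-empty effectively closed strongly aperiodic $G_1$-subshift $Y$. Such a subshift exists precisely because $G_1$ has decidable word problem: the combinatorial conditions cutting out a free subshift then become decidable, so a free subshift admits an effective presentation, exactly the type of input used in \cite{BarbieriSablik2017}. Feeding $Y$ into the construction underlying \Cref{theorem_expansive_case} produces a $\Gamma$-subshift whose $G_1$-subaction factors onto $Y$. Because factor maps and subactions are $G_1$-equivariant, any configuration fixed by $(g_1,e,e)$ projects to a $g_1$-periodic point of $Y$; since $G_1$ acts freely on $Y$ this forces $g_1 = e$, which disposes of the first case.

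For the remaining case I would depart from the inert choice made in \Cref{theorem_expansive_case}, where $H_1 \times H_2$ acts trivially, and instead load the forced grid with geometric aperiodicity. Seward's translation-like $\ZZ$-actions on $G_2$ and on $G_3$ \cite{Seward2014}, combined with Jeandel's forcing technique \cite{Jeandel2015_translation}, embed a two-dimensional grid into $G_2 \times G_3$ by local rules; on this grid I would superimpose, through additional local rules, a strongly aperiodic two-dimensional SFT such as the Robinson tiling \cite{Robinson1971}. Since the grid rules, the Robinson rules and the simulation layer encoding $Y$ are all local, the common refinement is a single subshift of finite type over $\Gamma$ (if the simulation layer is only sofic one passes to its SFT cover, noting that freeness lifts through a covering factor map and that surjectivity preserves non-emptiness). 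Non-emptiness follows from non-emptiness of $Y$, of the Robinson SFT, and of the grid configurations guaranteed by Seward's theorem. The decidability of the word problems of $G_2$ and $G_3$ is what makes the grid rigid enough to carry this aperiodic layer, and it is in any case necessary: by Jeandel's obstruction \cite{Jeandel2015} a recursively presented group admitting a strongly aperiodic SFT has decidable word problem, and $\Gamma$ has decidable word problem if and only if all three $G_i$ do.

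The main obstacle is precisely the freeness argument in the $G_2, G_3$ directions. The grid is embedded through translation-like actions rather than through a genuine $\ZZ^2$-subgroup, so a nontrivial left translation by $h \in G_2$ or $h \in G_3$ does not act as a clean shift of the simulated $\ZZ^2$ tiling; I would have to show that the forced coordinate structure is rigid enough that any $h$ fixing a configuration nonetheless induces a nontrivial translation symmetry of the embedded Robinson tiling, contradicting its strong aperiodicity. Transferring a hypothetical period in the ambient group to a genuine period of the $\ZZ^2$ tiling, using the freeness of the translation-like actions to pin down how group translations move along the bi-infinite paths, is the crux of the argument. Once this rigidity is established, combining the two cases shows that $\Gamma$ acts freely, so the constructed subshift of finite type is non-empty and strongly aperiodic, which is \Cref{corollaryaperioci}.
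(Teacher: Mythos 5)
Your first case (a stabilizing element of the form $(g_1,e,e)$) is fine and matches the paper's mechanism, but your second case contains a genuine gap that cannot be repaired in the form you state it. The claim that any $(e,g_2,g_3)\neq(e,e,e)$ fixing a configuration ``induces a nontrivial translation symmetry of the embedded Robinson tiling'' is false. An element stabilizing a configuration of $\Grid$ together with a Robinson layer conjugates the induced $\ZZ^2$-action to itself and therefore permutes the simulated $\ZZ^2$-sheets, but it need not preserve any single sheet; when it moves every sheet to a different sheet, no translation of any individual Robinson configuration is induced and no contradiction arises. Concretely, take $H_1 = \ZZ^2$, $H_2 = \ZZ$, let $\omega$ be the constant grid configuration whose $[\omega_1]$-orbits are the horizontal lines of $\ZZ^2$, and place one and the same Robinson configuration $c$ on every sheet, i.e.\ $y\bigl((a,n),b\bigr) = c(a,b)$. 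This satisfies all the local rules (grid rules plus Robinson rules along the simulated edges), yet it is fixed by the nontrivial element $\bigl((0,1),0\bigr)$, which merely shuffles the sheets. So the SFT you describe on $G_2\times G_3$ is not strongly aperiodic, only weakly so --- which is exactly why Jeandel's use of this technique in \cite{Jeandel2015_translation} yields weak and not strong aperiodicity. A further warning sign: in your construction the decidability of the word problems of $G_2$ and $G_3$ is never actually used, so if your case-2 argument worked it would show that every product of two infinite finitely generated groups admits a strongly aperiodic SFT, a statement the paper explicitly flags as open.

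The paper avoids this trap by treating the three factors symmetrically and sourcing the aperiodicity in each direction from the factor itself rather than from the simulated grid. For each $i$ it invokes \Cref{Lemma_ABT} (this is where decidable word problem of \emph{every} $G_i$ enters) to get a non-empty effectively closed strongly aperiodic $G_i$-subshift $Y_i$, then applies \Cref{theorem_expansive_case} with $G = G_i$ and $H_1\times H_2 = G_j\times G_k$ to obtain a sofic $(G_1\times G_2\times G_3)$-subshift $X_i$ whose $G_i$-subaction is conjugate to $Y_i$ and on which $G_j\times G_k$ acts trivially; the triviality of the action of the other two factors (enforced by the extra forbidden patterns $\FF_4$, which synchronize all simulated grids within a coset) is precisely what substitutes for the sheet-preservation property your argument lacks. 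The product $X_1\times X_2\times X_3$ is then strongly aperiodic, since a stabilizing $(g_1,g_2,g_3)$ acts on each $X_i$ only through $g_i$, and one concludes by passing to an SFT extension of this sofic subshift, using (as you correctly note) that freeness pulls back under factor maps and surjectivity preserves non-emptiness.
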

	\addtocounter{theorem}{-1}
}

A result by Carroll and Penland~\cite{CarrollPenland} shows that having a non-empty strongly aperiodic subshift of finite type is a commensurability invariant of groups. Putting this together with \Cref{corollaryaperioci} we deduce that any finitely generated group with decidable word problem which is commensurable to its square also has the property. In particular, as the Grigorchuk group is commensurable to its square, this yields the existence of non-empty strongly aperiodic subshifts of finite type in the Grigorchuk group.

{
	\renewcommand{\thetheorem}{\ref{cor_grigo}}
	\begin{corollary}
		There exists a non-empty strongly aperiodic subshift of finite type defined over the Grigorchuk group.
	\end{corollary}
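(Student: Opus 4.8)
The plan is to deduce this directly from \Cref{corollaryaperioci} together with the fact that admitting a non-empty strongly aperiodic SFT is a commensurability invariant, using as the only structural input that the Grigorchuk group is commensurable with its own square. Write $\Gamma$ for the Grigorchuk group, and recall that $\Gamma$ is infinite, finitely generated, and has decidable word problem.

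First I would isolate and prove abstractly the following general principle: if $G$ is a finitely generated group with decidable word problem which is commensurable with $G \times G$, then $G$ admits a non-empty strongly aperiodic SFT. The key observation is that commensurability is an equivalence relation preserved under taking direct products with a fixed group, that is, $A \sim B$ implies $A \times C \sim B \times C$; indeed, if $A' \le A$ and $B' \le B$ are isomorphic finite-index subgroups, then $A' \times C$ and $B' \times C$ are isomorphic finite-index subgroups of $A \times C$ and $B \times C$. Applying this to the hypothesis $G \sim G \times G$ with $C = G$ gives $G \times G \sim (G \times G) \times G = G^{3}$, and transitivity yields $G \sim G^{3}$.

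Next I would observe that $G^{3} = G \times G \times G$ is a direct product of three copies of $G$, each infinite, finitely generated, and with decidable word problem. Hence \Cref{corollaryaperioci} applies and produces a non-empty strongly aperiodic SFT on $G^{3}$. Since this property is a commensurability invariant by the result of Carroll and Penland~\cite{CarrollPenland}, and $G \sim G^{3}$, the group $G$ itself admits a non-empty strongly aperiodic SFT, establishing the principle.

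Finally I would apply the principle to $\Gamma$. The only nonformal ingredient is the classical fact that $\Gamma$ is commensurable with $\Gamma \times \Gamma$: the stabilizer of the first level has index $2$ in $\Gamma$ and embeds, via the canonical section map $\psi$, as a finite-index subgroup of $\Gamma \times \Gamma$. Together with the decidability of the word problem of $\Gamma$, the principle applies verbatim and gives the desired SFT. The main obstacle is therefore not in the symbolic-dynamical argument, which is entirely formal once \Cref{corollaryaperioci} is available, but in this structural commensurability property; this is, however, a well-known feature of the branch structure of $\Gamma$ and requires no new work here.
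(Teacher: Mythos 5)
Your proposal is correct and follows essentially the same route as the paper: the paper's Lemma~\ref{theorema_bonito} is exactly your abstract principle (its proof of $G \sim G \times G \times G$ via $G \times H_1 \cong G \times H_2$ is your ``commensurability is preserved by products with a fixed factor'' observation with $C = G$), and it is then combined with \Cref{corollaryaperioci}, the Carroll--Penland invariance, and the cited fact that the Grigorchuk group is commensurable to its square. The only cosmetic difference is that the paper also disposes of the (here irrelevant) finite case inside the lemma, whereas you implicitly assume $G$ infinite, which is harmless since you verify infiniteness for the Grigorchuk group before applying the principle.
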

	\addtocounter{theorem}{-1}
}

This strengthens Jeandel's result from~\cite{Jeandel2015_translation} where the Grigorchuk group was shown to admit a weakly aperiodic SFT, that is, a subshift such that the orbit of every configuration under the shift action is infinite. More generally, we show that the same result holds for any finitely generated branch group with decidable word problem and in fact characterizes recursively presented branch groups with decidable word problem.

{
	\renewcommand{\thetheorem}{\ref{teorema_branch}}
	\begin{theorem}
		Let $G$ be a finitely generated and recursively presented branch group. Then $G$ has decidable word problem if and only if there exists a non-empty strongly aperiodic $G$-SFT.
	\end{theorem}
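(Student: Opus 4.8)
The plan is to prove the two implications separately, the backward implication being essentially a citation and the forward implication carrying all the work. For the direction ``(strongly aperiodic SFT) $\Rightarrow$ (decidable word problem)'', I would simply invoke the result of Jeandel~\cite{Jeandel2015} quoted in the introduction: a recursively presented group admitting a non-empty strongly aperiodic SFT necessarily has decidable word problem. As $G$ is assumed recursively presented, this direction is immediate and makes no use of the branch hypothesis.

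For the converse, suppose $G$ has decidable word problem. The strategy is to exhibit a finite-index subgroup of $G$ which is a direct product of three infinite, finitely generated groups with decidable word problem, apply \Cref{corollaryaperioci} to that subgroup, and then transport the conclusion back to $G$ via commensurability. I would use the defining structure of a branch group acting on a spherically homogeneous rooted tree $T$: for every level $n$ the rigid level stabilizer $\operatorname{Rist}_G(n)$ has finite index in $G$, the action is level-transitive, and $\operatorname{Rist}_G(n) = \prod_{v \in L_n} \operatorname{Rist}_G(v)$ is the internal direct product of the rigid vertex stabilizers over the vertices $v$ in the level set $L_n$. Since the branching is at least $2$ at each level we have $|L_n| \to \infty$, so I would fix $n$ large enough that $|L_n| \geq 3$.

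Next I would verify the three properties needed to apply \Cref{corollaryaperioci}. First, $\operatorname{Rist}_G(n)$ has finite index in the finitely generated group $G$, hence is itself finitely generated; a direct product is finitely generated only if each factor is, so every $\operatorname{Rist}_G(v)$ is finitely generated. Second, level-transitivity conjugates the $\operatorname{Rist}_G(v)$ into one another, so they are pairwise isomorphic; as their product $\operatorname{Rist}_G(n)$ is infinite (a finite-index subgroup of the infinite group $G$), a finite product of finite groups being finite forces each factor to be infinite. Third, each $\operatorname{Rist}_G(v)$ is a subgroup of $G$, so fixing a finite generating set expressed as words in the generators of $G$, a word is trivial in $\operatorname{Rist}_G(v)$ if and only if it is trivial in $G$; since $G$ has decidable word problem, so does each $\operatorname{Rist}_G(v)$. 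Grouping the factors as $G_1 = \operatorname{Rist}_G(v_1)$, $G_2 = \operatorname{Rist}_G(v_2)$ and $G_3 = \prod_{i \geq 3} \operatorname{Rist}_G(v_i)$ yields a decomposition $\operatorname{Rist}_G(n) \cong G_1 \times G_2 \times G_3$ into three infinite, finitely generated groups with decidable word problem.

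Finally, \Cref{corollaryaperioci} produces a non-empty strongly aperiodic SFT on $\operatorname{Rist}_G(n)$. Since $\operatorname{Rist}_G(n)$ is a finite-index subgroup of $G$, the two groups are commensurable, and by the result of Carroll and Penland~\cite{CarrollPenland} the existence of a non-empty strongly aperiodic SFT is a commensurability invariant; hence $G$ itself admits one. I expect the main obstacle to lie not in the symbolic-dynamical machinery but in the bookkeeping around the branch structure: one must check carefully that the rigid vertex stabilizers are genuinely finitely generated and carry a decidable word problem (both resting on the finite index of $\operatorname{Rist}_G(n)$ and on decidability descending to subgroups), and that a decomposition into three \emph{infinite} factors is available, i.e. that the chosen level has at least three vertices and that the factors are infinite rather than merely finite. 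Once these structural facts are pinned down, the proof is a direct assembly of \Cref{corollaryaperioci}, the commensurability invariance, and Jeandel's converse.
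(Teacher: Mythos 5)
Your proof is correct and, in substance, it is the paper's proof: the forward implication is Jeandel's theorem (via \Cref{Lemma_ABT}, which applies since an SFT is effectively closed), and the converse locates a finite-index subgroup of $G$ that is a direct product of at least three infinite, finitely generated groups with decidable word problem, applies \Cref{corollaryaperioci} to it, and pulls the resulting strongly aperiodic SFT back to $G$ through \Cref{teorema_carolpenland}. The one genuine difference is where the product decomposition comes from. You invoke the geometric definition of branch groups --- a faithful, level-transitive action on a spherically homogeneous rooted tree, with $\operatorname{Rist}_G(n) = \prod_{v} \operatorname{Rist}_G(v)$ of finite index --- and then need level-transitivity to see that the factors are pairwise conjugate, hence isomorphic, hence all infinite. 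The paper instead works with the algebraic definition it states (the sequences $(L_i)$, $(H_i)$, $(k_i)$), under which the decomposition is immediate: $H_2 = L_2^{(1)} \times \dots \times L_2^{(k_2)}$ with $k_2 \geq 4$ because $k_0 = 1$ and each $k_i$ properly divides $k_{i+1}$, and the factors are isomorphic by hypothesis (the paper even notes it only uses conditions 2, 3 and 5 of its definition). This matters because the algebraic definition is the more general one: a group that is branch in the algebraic sense need not carry a faithful tree action, so, read strictly against the paper's definition, your appeal to $\operatorname{Rist}_G(n)$ assumes structure that is not given. The fix is cosmetic --- replace $\operatorname{Rist}_G(n)$ by $H_2$ and the $\operatorname{Rist}_G(v_i)$ by the $L_2^{(i)}$; every subsequent step of your argument (finite index implies finitely generated, factors of finitely generated products are finitely generated, isomorphic factors of an infinite finite product are infinite, finitely generated subgroups inherit decidable word problem, grouping the factors into three) is exactly what the paper does.
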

	\addtocounter{theorem}{-1}
}

\section{Preliminaries}

Consider a group $G$ and a compact topological space $X$. The tuple $(X,T)$ where $T\colon G \curvearrowright X$ is a left $G$-action by homeomorphisms $(T^g)_{g \in G}$ is called a \define{$G$-dynamical system}. Let $(X,T)$ and $(Y,S)$ be two $G$-dynamical systems. We say $\phi\colon X \to Y$ is a \define{topological morphism} if it is continuous and $G$-equivariant, that is, $\phi \circ T^g = S^g \circ \phi$ for all $g \in G$. A surjective topological morphism $\phi \colon X \twoheadrightarrow Y$ is a \define{topological factor} and we say that  $(Y,S)$ is a \define{factor} of $(X,T)$ and that $(X,T)$ is an \define{extension} of $(Y,S)$. When $\phi$ is a bijection and its inverse is continuous we say it is a  \define{topological conjugacy} and that $(X,T)$ is \define{conjugated} to $(Y,S)$.

In what follows, we consider the space $X$ to be a closed subset of $\{0,1\}^{\NN}$ equipped with the product topology and $G$ to be a finitely generated group with identity $1_G$. For a word $w = w_0w_1\dots w_n \in \{0,1\}^* \isdef \bigcup_{k \in \NN}\{0,1\}^k$ we denote by $[w]$ the set of all $x \in \{0,1\}^{\NN}$ such that $x_i = w_i$ for $i \leq n$. That is, $[w]$ is the set of all infinite binary words which start with $w$. 

\begin{definition}
	A closed subset $X \subset \{0,1\}^{\NN}$ is \define{effectively closed} if there exists a Turing machine which on entry $w \in \{0,1\}^*$ accepts if and only if $[w] \cap X = \emptyset$.
\end{definition}

In other words, an effectively closed set $X$ is one for which there exists a Turing machine which enumerates its complement as an effective open set. This gives a sequence of upper approximations of $X$ whose intersection is $X$.

\begin{definition}
	Let $S$ be a fixed finite set of generators of $G$. A group action $T\colon G \curvearrowright X$ is \define{effectively closed} if there exists a Turing machine which on entry $s \in S$ and $v,w \in \{0,1\}^*$ accepts $(s,v,w)$ if and only if $[v] \cap T^s([w] \cap X)  = \emptyset$.
\end{definition}

An effectively closed action can also be understood in the following manner. There is a Turing machine which on entry $s,w$ enumerates a sequence of words $(v_j)_{j \in J}$ such that $T^{s}([w] \cap X) = \{0,1\}^{\NN} \setminus \bigcup_{j \in J}[v_j]$. In other words, it yields a sequence of upper approximations of $T^{s}([w]\cap X)$ whose intersection is $T^{s}([w]\cap X)$.

\begin{remark} It is not hard to show that whenever $T\colon G \curvearrowright X$ is effectively closed, one can construct a Turing machine which on entry $s_1\dots s_n \in S^*$ and $v,w \in \{0,1\}^*$ accepts if and only if $[v] \cap T^{s_1\dots s_n}([w] \cap X) = \emptyset$. In particular, this implies that the definition of effectively closed action does not depend upon the choice of generators $S$.
\end{remark}

\begin{remark} If $T\colon G \curvearrowright X$ is effectively closed then $X$ is also effectively closed. Indeed, it suffices to choose $w = \epsilon$ to be the empty word and fix $s \in S$. Let $v \in \{0,1\}^*$, by definition, the machine accepts $(s,v,\epsilon)$ if and only if $[v] \cap X = \emptyset$. 
\end{remark}

\begin{remark}
	The definition of effectively closed action does not make any assumption on the recursive properties of the acting group. For instance, the trivial action over $\{0,1\}^{\NN}$ is effectively closed for any finitely generated group.
\end{remark}

\begin{definition}
	Let $S$ be a fixed finite set of generators of $G$. We say a $G$-dynamical system $(X,T)$ is \define{effectively closed} if $T \colon G \curvearrowright X$ is an effectively closed action.
\end{definition}

The pointwise idea behind this definition is that there is an algorithm which given a word $s_1\dots s_k \in S^*$ representing an element $g \in G$ and the first $n$ coordinates of $x \in X \subset \{0,1\}^{\NN}$ returns an approximation of $T^g(x)$. In other words, as we increase $n$ and let the machine run for longer periods of time, we get better approximations of $T^g(x)$.

Let $\ag$ be a finite alphabet and $G$ a finitely generated group. The set $\ag^G = \{ x\colon G \to \ag\}$ equipped with the left group action $\sigma\colon G \times \ag^G \to \ag^G$ given by: 
$(\sigma^{h}(x))(g) \isdef x(h^{-1}g)$ is the \textit{full $G$-shift}. The elements $a \in \ag$ and $x \in \ag^G$ are called \define{symbols} and \define{configurations} respectively. We endow $\ag^G$ with the product topology, therefore obtaining a compact metric space. The topology is generated by the metric $\displaystyle{d(x,y) = 2^{-\inf\{|g|\; \mid\; g \in G:\; x(g) \neq y(g)\}}}$ where $|g|$ is the length of the smallest expression of $g$ as the product of some fixed set of generators of $G$. This topology is also generated by the clopen subbase given by the \define{cylinders} $[a]_g = \{x \in \ag^G~|~x(g) = a\in \ag\}$. A \emph{support} is a finite subset $F \subset G$. Given a support $F$, a \emph{pattern with support $F$} is an element $p \in \ag^F$, i.e. a finite configuration and we write $\operatorname{supp}(p) = F$. Analogously to words, we denote the cylinder generated by $p$ centered in $g$ as $[p]_g = \bigcap_{h \in F}[p(h)]_{gh}$. If $x \in [p]_g$ for some $g \in G$ we write $p \sqsubset x$ to say that $p$ \emph{appears} in $x$.

A subset $X \subset \ag^G$ is a \define{$G$-subshift} if and only if it is $\sigma$-invariant (for each $g \in G$, $\sigma^g(X) \subset X$) and closed in the product topology. Equivalently, $X$ is a $G$-subshift if and only if there exists a set of forbidden patterns $\FF$ such that
$$X=X_\FF \isdef  {\ag^G \setminus \bigcup_{p \in \FF, g \in G} [p]_g}.$$

Said otherwise, the $G$-subshift $X_{\FF}$ is the set of all configurations $x \in \ag^G$ such that no $p \in \FF$ appears in $x$.

If the context is clear enough, we will drop the group $G$ from the notation and simply refer to a subshift. We shall also use the notation $\ag_X$ to denote the alphabet of $X$ and $\FF_X$ to denote some set of forbidden patterns such that $X = X_{\FF_X}$. A subshift $X\subseteq \ag^G$ is \define{of finite type} -- SFT for short -- if there exists a finite set of forbidden patterns $\FF$ such that $X=X_\FF$. A subshift $X\subseteq \ag^G$ is \define{sofic} if it is the factor of an SFT. Finally, a subshift is \define{effectively closed} if there exists a recursively enumerable coding of a set of forbidden patterns $\FF$ such that $X=X_\FF$. More details can be found in~\cite{ABS2017}. In the case of $\ZZ$-subshifts, we say $X$ is effectively closed if and only if there exists a recursively enumerable set of forbidden words $\FF$ such that $X = X_{\FF}$.

We say a $\ZZ^2$-subshift is \emph{nearest neighbour} if there exists a set of forbidden patterns $\FF$ defining it such that each $p \in \FF$ has support $\{(0,0), (1,0)\}$ or $\{(0,0), (0,1)\}$. While there are $\ZZ^2$-SFTs which are not nearest neighbour, each $\ZZ^2$-SFT is topologically conjugate to a nearest neighbour $\ZZ^2$-subshift through a higher block recoding, see for instance~\cite{lind1995introduction} for the 1-dimensional case. 

\begin{remark}\label{remark_nn_and_1_block}
	For every sofic $\ZZ^2$-subshift $Y$ we can jointly extract a nearest neighbour $\ZZ^2$-SFT extension $\widehat{X}$ and a $1$-block topological factor $\widehat{\phi}\colon \widehat{X} \twoheadrightarrow Y$, that is, a topological factor such that there exists a local recoding of the alphabet $\widehat{\Phi} \colon \ag_{\widehat{X}} \to \ag_{Y}$ such that for each $x \in \widehat{X}$ and $z \in \ZZ^2$ we have $(\widehat{\phi}(x))(z) = \widehat{\Phi}(x(z))$. This fact follows from the Curtis--Lyndon--Hedlund theorem and a higher-block recoding. For a proof in general finitely generated groups, see Proposition 1.3 and Proposition 1.6 of~\cite{BarbieriThesis}.
\end{remark}

Let $H \leq G$ be a subgroup and $(X,T)$ a $G$-dynamical system. The \emph{$H$-subaction} of $(X,T)$ is $(X,T_H)$ where $T_H \colon H \curvearrowright X$ is the restriction of $T$ to $H$, that is, for each $h \in H$, then $(T_H)^h(x) = T^h(x)$. In the case of a subshift $X \subset \ag^G$ there is also the different notion of projective subdynamics. The \emph{$H$-projective subdynamics} of $X$ is the set $\pi_H(X) = \{ y \in \ag^H \mid \exists x \in X, \forall h \in H, y(h) = x(h) \}$. It is important to remark that subactions do not preserve expansivity, so in particular a subaction of a subshift is not necessarily a subshift. On the contrary, the projective subdynamics of a subshift $\pi_H(X)$ is always an $H$-subshift.

\section{Simulation without an embedded copy of $\ZZ$}
\label{section.generalities}

The purpose of this section is to prove the following result.

\begin{theorem}\label{simulationTHEOREM}
	Let $G$ be a finitely generated group and $(X,T)$ an effectively closed $G$-dynamical system. For every pair of infinite and finitely generated groups $H_1,H_2$ there exists a $(G \times H_1 \times H_2)$-SFT whose $G$-subaction is an extension of $(X,T)$.
\end{theorem}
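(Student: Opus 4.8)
The plan is to combine two ingredients already available in the literature: the realization of effectively closed one-dimensional systems as sofic $\ZZ^2$-subshifts, and a geometric embedding of a $\ZZ^2$-grid into $H_1 \times H_2$ that can be enforced by finitely many local rules. The overall strategy is to first compress the $G$-dynamical system $(X,T)$ into an auxiliary effectively closed $\ZZ$-subshift, then realize that $\ZZ$-subshift as a sofic $\ZZ^2$-subshift, and finally transport a nearest-neighbour cover of it onto a grid structure carried by the $H_1 \times H_2$ coordinates, while the $G$-coordinate carries the action $T$.

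First I would encode $(X,T)$ using Toeplitz configurations, following Barbieri--Sablik~\cite{BarbieriSablik2017}. Since the action is effectively closed, the predicates $[v] \cap T^s([w]\cap X) = \emptyset$ are recursively enumerable. Using a Toeplitz coding, I would spread the binary point $x \in \{0,1\}^{\NN}$ and the bookkeeping of its images under the generators $s \in S$ along an auxiliary direction, obtaining an effectively closed $\ZZ$-subshift $Y$ in which any finite window constrains only finitely many bits, so that all the effective constraints of $T$ appear as a recursively enumerable family of forbidden patterns. The coding would be arranged so that the $G$-coordinate of the eventual product subshift acts on this data through $T$, and so that reading off the encoded point gives a continuous, $G$-equivariant surjection onto $(X,T)$.

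Next I would manufacture a $\ZZ^2$-grid inside $H_1 \times H_2$. By Seward's theorem~\cite{Seward2014}, each infinite finitely generated group $H_i$ admits a translation-like action by $\ZZ$; equivalently, for a suitable generating set the Cayley graph of $H_i$ is partitioned into disjoint bi-infinite paths. Using Jeandel's technique~\cite{Jeandel2015_translation}, I would add to the alphabet the data specifying, at each vertex, which neighbour is its successor and which is its predecessor along such a path, together with finitely many local rules forcing these choices to assemble into consistent bi-infinite lines. Performing this simultaneously in the $H_1$ and the $H_2$ coordinates equips each point of $H_1 \times H_2$ with two transverse line directions; the product of the two path structures covers the Cayley graph of $H_1 \times H_2$ by embedded grids in which the two directions play the roles of the axes of $\ZZ^2$.

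Finally I would assemble the SFT. By the simulation theorems of Aubrun--Sablik~\cite{AubrunSablik2010} and Durand--Romashchenko--Shen~\cite{DurandRomashchenkoShen2010}, the effectively closed $\ZZ$-subshift $Y$ is the projective subdynamics of a sofic $\ZZ^2$-subshift, and by \Cref{remark_nn_and_1_block} I may take a nearest-neighbour $\ZZ^2$-SFT cover with a $1$-block factor map. I would transport this cover onto the grid embedded in $H_1 \times H_2$, letting its horizontal and vertical rules act along the two transverse line families, while the rules coupling the data at $g$ and $gs$ are enforced along the $G$-coordinate. Superimposing the grid-forcing layer, the Toeplitz/effective layer and the simulation layer yields a single $(G \times H_1 \times H_2)$-SFT $Z$, and decoding along the $G$-direction exhibits its $G$-subaction as an extension of $(X,T)$. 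The main obstacle I anticipate is that the grids embedded by Seward's paths need not be genuine copies of $\ZZ^2$: the step maps along the two directions need not commute and the lines need not close up, so the local rules must synchronize the one-dimensional data of $Y$ across one family of lines while the computation verifying the effective constraints runs along the transverse family, uniformly over all of $H_1 \times H_2$ and purely through finite local rules. Guaranteeing this synchronization independently of the irregular geometry of the paths is the technical crux.
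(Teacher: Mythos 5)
Your plan follows the paper's route step for step: the Toeplitz encoding of $(X,T)$ into an effectively closed $\ZZ$-subshift, the Aubrun--Sablik/Durand--Romashchenko--Shen theorem to lift it to a sofic $\ZZ^2$-subshift with a nearest-neighbour cover, and Seward's theorem plus Jeandel's successor/predecessor labels to carry that cover on $H_1\times H_2$. However, the point you flag as the unresolved ``technical crux'' is a genuine gap, and it is precisely where the paper's construction does concrete work that your sketch omits. The resolution is to \emph{not} let the two path structures vary freely over $H_1\times H_2$: the grid layer is a product $\overline{\Grid}_1\times\overline{\Grid}_2$, where the $H_1$-successor labels are forced by local rules (comparing each element with its translates by generators of $H_2$) to be constant along the $H_2$-direction, and symmetrically for the $H_2$-labels. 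With this constancy the two step maps act on disjoint coordinates and therefore commute automatically, so \emph{every} configuration of the grid layer induces a genuine bounded $\ZZ^2$-action; nothing about the irregular geometry of the paths needs to be synchronized. Seward's theorem is used only to show that at least one configuration induces a \emph{free} action, which is what surjectivity of the factor map needs; non-free configurations (orbits closing into finite cycles) are harmless, because reading the $Y$-symbols along any induced orbit still yields a point of $Y$, as no forbidden pattern can appear along grid neighbours.

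The second missing device concerns your demand that the decoded data be synchronized ``uniformly over all of $H_1\times H_2$''. This is not needed, and in fact the paper's SFT does not achieve it: different grids inside one coset may code unrelated points (this is exactly the ``undesirable perk'' discussed at the start of the paper's Section 4.1). What makes the proof work instead is (i) the Toeplitz encoding makes the decoding map $\gamma$ of \Cref{lemma_allforone} invariant under shifts within a grid, so the factor map can be defined by reading only the single grid through the identity of $H_1\times H_2$; and (ii) a local rule (the set $\FF_2$ in the paper) forcing all $(H_1\times H_2)$-cosets indexed by $G$ to carry the \emph{same} grid configuration $\omega$, so that the coupling rules between the cosets of $1_G$ and $s^{-1}$ compare aligned positions and hence transport the relation $y = T^s(x)$ grid-by-grid. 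Without (ii), your coupling rules along the $G$-coordinate compare positions that need not correspond under the two grids and $G$-equivariance of the factor map fails; with (i) and (ii) the synchronization problem you raise simply does not arise.
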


The general scheme of the proof is the following: first, in \Cref{subsectop} we construct a $\ZZ$-subshift $\Topz(X,T)$ which encodes an arbitrary action $T \colon G \curvearrowright X$ of a finitely generated group through the use of Toeplitz sequences. We show that whenever $T \colon G \curvearrowright X$ is effectively closed, the $\ZZ$-subshift $\Topz(X,T)$ is effectively closed as well. Subsequently, we extend $\Topz(X,T)$ to a $\ZZ^2$-subshift by repeating its rows periodically in the vertical direction. Using a known simulation theorem~\cite{AubrunSablik2010,DurandRomashchenkoShen2010} we conclude that this two-dimensional subshift, which we denote by $\Topzz(X,T)$, is a sofic $\ZZ^2$-subshift from which we extract a nearest neighbour SFT extension. 

The next step is presented in \Cref{subsecgrid} where we construct an $(H_1 \times H_2)$-SFT $\Grid$ with the property that each $\omega \in \Grid$ induces a bounded $\ZZ^2$-action on $H_1 \times H_2$. We use a result by Seward~\cite{Seward2014} to guarantee that for a specific choice of generators of $H_1$ and $H_2$ there is at least one $\omega \in \Grid$ inducing a free action. We will use these $\omega$ as replacements of two-dimensional grids and to embed in them configurations of a nearest neighbour $\ZZ^2$-subshift.

Finally, in \Cref{subsecproof} we use the simulated grids in $H_1 \times H_2$ to encode a nearest neighbour extension of $\Topzz(X,T)$. This yields an $(H_1 \times H_2)$-SFT which factors onto a sofic $(H_1 \times H_2)$-subshift where every grid codes an element $x \in X$ and its image under $T$ by the generators of $G$. We then proceed to extend this object to a $(G \times H_1 \times H_2)$-SFT $\Final(X,T)$ by forcing every $(H_1 \times H_2)$-coset to have exactly the same grid structure and by linking them through local rules which mimic the action $T \colon G \curvearrowright X$ along every generator of $G$. We finish this last step by defining the topological factor and showing that it satisfies the required properties.

\subsection{Encoding an effectively closed dynamical system using Toeplitz configurations}\label{subsectop}

\newcommand{\cuadrito}{
	\begin{tikzpicture}
	\draw [fill = black!50] (0,0) rectangle +(0.25,0.25);
	\end{tikzpicture}}

Let $T \colon G \curvearrowright X$ be an action of a finitely generated group. Here we show how to encode $T$ into a $\ZZ$-subshift. The ideas presented in here are very similar as those of Section 3.2 of~\cite{BarbieriSablik2017}, although here we shall only treat a special simplified case which suffices for our purposes.

A configuration $\tau \in \ag^{\ZZ}$ is said to be \emph{Toeplitz} if for every $m \in \ZZ$ there is $p > 0$ such that $\tau(m) = \tau(m+kp)$ for each $k \in \ZZ$. These configurations were initially defined by Jacobs and Keane~\cite{JacobsKeane1969} for one-sided dynamical systems and are quite useful to encode information in a recurrent way. Indeed, consider the function $\Psi \colon \{0,1\}^{\NN} \to \{0,1,\cuadrito\}^{\ZZ}$ given by:

$$ \Psi(x)(j) \isdef \begin{cases}
x_n \mbox{\ \ \  if } j = 3^n \mod{3^{n+1}} \\ \cuadrito  \mbox{ \ \ \ \ in the contrary case. }
\end{cases} $$

For instance, if $x = x_0x_1x_2x_3\dots$ we obtain that $\Psi(x)$ is

\begin{center}
	\begin{tikzpicture}[scale = 0.4]
	\draw [<->] (-4,1) -- (26,1);
	\draw [<->] (-4,0) -- (26,0);
	\foreach \i in {-4,...,25}{
		\node at (\i+0.5,-0.5) {\scalebox{0.6}{$\i$}};
	}
	\draw [fill = black!50] (18,0) rectangle +(1,1);
	\draw [fill = white] (9,0) rectangle +(1,1);
	\node at (9.5,0.5) {$x_{2}$};
	\draw [fill = black!50] (-3,0) rectangle +(1,1);
	\draw [fill = black!50] (0,0) rectangle +(1,1);
	
	\foreach \i in {0,...,2}{
		\draw [fill = black!50] (9*\i+6,0) rectangle +(1,1);
		\draw [fill = white] (9*\i+3,0) rectangle +(1,1);
		\node at (9*\i+3.5,0.5) {$x_{1}$};
	}
	\foreach \i in {-1,...,7}{
		\draw [fill = black!50] (3*\i+2,0) rectangle +(1,1);
		\draw [fill = white] (3*\i+1,0) rectangle +(1,1);
		\node at (3*\i+1.5,0.5) {$x_{0}$};
	}
	\end{tikzpicture}
\end{center}

Technically speaking, $\Psi(x)$ is not Toeplitz as $m = 0$ fails to satisfy the requirement, however, every other $m \in \ZZ \setminus \{0\}$ does. For $x = (x_i)_{i \in \NN} \in \{0,1\}^{\NN}$ let $\sigma(x) \in \{0,1\}^{\NN}$ be the one-sided shift defined by $\sigma(x)_i=x_{i+1}$ and note that for every $j \in \ZZ$ we have that:
$$\Psi(x)(3j) = \Psi(\sigma(x))(j), \ \ \ \Psi(x)(3j+1) = x_0, \ \mbox{and} \ \Psi(x)(3j+2) = \cuadrito.$$ 
Let $\Orb(\Psi(x))$ be the two-sided orbit of $\Psi(x)$. The important property of $\Psi(x)$ is that $x$ can be recognized locally from any configuration $y \in \overline{\Orb(\Psi(x))}$. Indeed, each subword of length $3$ in $\Psi(x)$ is a cyclic permutation of a word of the form $ax_0\cuadrito$ where $a \in \{0,1,\cuadrito\}$, therefore $x_0$ can be recognized as it is the only non-$\cuadrito$ symbol which is followed by a $\cuadrito$. Similarly, any word of length $9$ can be used to decode $x_0,x_1$ and generally, a word of length $3^n$ is sufficient to decode $x_0,x_1,\dots x_{n-1}$. As any $y \in \overline{\Orb(\Psi(x))}$ must coincide in arbitrarily large blocks with a shift of $\Psi(x)$ we have that this property holds for every configuration in $\overline{\Orb(\Psi(x))}$.

Let $(X,T)$ be a $G$-dynamical system. We use the encoding $\Psi$ defined above to construct an effectively closed $\ZZ$-subshift $\Topz(X,T)$ which encodes the configurations of $X$ and their images under the action of $T$ along the generators $S$. Formally, let $S \subset G$ be a finite and symmetric $(S^{-1} \subset S)$ set of generators of $G$ which contains the identity $1_G$. 

We define $\Topz(X,T)$ as the $\ZZ$-subshift over the alphabet $\{0,1,\cuadrito\}^{S}$ given by the set of forbidden words $\FF_{\Topz(X,T)}$, where $\FF_{\Topz(X,T)} = \bigcup_{n \in \NN}\FF_n$ and $\FF_n$ is the set of words $w$ of length $3^{n+1}$ over the alphabet $\{0,1,\cuadrito\}^S$ which are accepted by the following procedure.

\textbf{Procedure:} given $w \in (\{0,1,\cuadrito\}^S)^{3^{n+1}}$ and $s \in S$ denote by $w(s) \in \{0,1,\cuadrito\}^{3^{n+1}}$ the $s$th component of $w$. For each $s \in S$ do the following: fix $j \isdef 0$, $v \isdef w(s)$ and check whether there exists $b \in \{0,1\}$ and $(a_i)_{1 \leq i \leq 3^{n-j}}$ with $a_i \in \{0,1,\cuadrito\}$ such that $v$ is a cyclic permutation of the concatenation of all $a_ib\ \cuadrito$. That is \begin{align*}
v \in \{ & a_1b\ \cuadrito\ a_2 b\ \cuadrito\ \dots a_{3^{n-j}}b\  \cuadrito\ , \\
& \cuadrito\ a_1b\ \cuadrito\ a_2 b\ \cuadrito\ \dots a_{3^{n-j}}b\ ,\\
& b\ \cuadrito\ a_1b\ \cuadrito\ a_2 b\ \cuadrito\ \dots a_{3^{n-j}}\}.
\end{align*}

If no such $b$ exists, declare $w \in \FF_n$, otherwise, define $u(s)_j \isdef b$, increase the counter $j \isdef j+1$ and repeat the procedure with $v \isdef a_1a_2 \dots a_{3^{n-j}}$. Repeat this procedure until $j \isdef n+1$. If at some iteration of this procedure no such $b$ exists, declare $w \in \FF_n$. Otherwise, we obtained a symbol $u(s)_j$ for each $j \in \{0,\dots,n\}$. We illustrate this procedure in~\Cref{figure_algoritmo}.
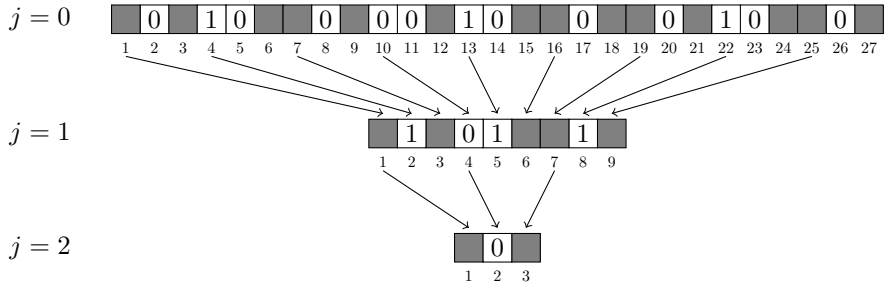
\begin{figure}[h!]
	\begin{tikzpicture}[scale = 0.38]
	\foreach \i in {1,...,27}{
		\node at (\i-0.5,-0.5) {\scalebox{0.6}{$\i$}};
	}
	\foreach \i in {0,...,8}{
		\draw [fill = black!50] (3*\i+2,0) rectangle +(1,1);
		\draw [fill = white] (3*\i+1,0) rectangle +(1,1);
		\node at (3*\i+1.5,0.5) {$0$};
		\draw [->] (3*\i+0.5,-0.8) -- (9.5+\i,-2.8);
	}
	\foreach \i in {0,...,2}{
		\draw [fill = black!50] (9*\i+6,0) rectangle +(1,1);
		\draw [fill = white] (9*\i+3,0) rectangle +(1,1);
		\node at (9*\i+3.5,0.5) {$1$};
	}
	\draw [fill = black!50] (0,0) rectangle +(1,1);
	\draw [fill = black!50] (18,0) rectangle +(1,1);
	\draw [fill = white] (9,0) rectangle +(1,1);
	\node at (9.5,0.5) {$0$};
	\node at (-2.5,0.5) {$j = 0$};
	\node at (-2.5,-3.5) {$j = 1$};
	\node at (-2.5,-7.5) {$j = 2$};
	\begin{scope}[shift = {(9,-4)}]
	\foreach \i in {1,...,9}{
		\node at (\i-0.5,-0.5) {\scalebox{0.6}{$\i$}};
	}
	\foreach \i in {0,...,2}{
		\draw [fill = black!50] (3*\i+2,0) rectangle +(1,1);
		\draw [fill = white] (3*\i+1,0) rectangle +(1,1);
		\node at (3*\i+1.5,0.5) {$1$};
		\draw [->] (3*\i+0.5,-0.8) -- (3.5+\i,-2.8);
	}
	\draw [fill = black!50] (0,0) rectangle +(1,1);
	\draw [fill = black!50] (6,0) rectangle +(1,1);
	\draw [fill = white] (3,0) rectangle +(1,1);
	\node at (3.5,0.5) {$0$};
	
	\end{scope}
	\begin{scope}[shift = {(12,-8)}]
	\foreach \i in {1,...,3}{
		\node at (\i-0.5,-0.5) {\scalebox{0.6}{$\i$}};
	}
	\draw [fill = black!50] (0,0) rectangle +(1,1);
	\draw [fill = black!50] (2,0) rectangle +(1,1);
	\draw [fill = white] (1,0) rectangle +(1,1);
	\node at (1.5,0.5) {$0$};
	\end{scope}
	\end{tikzpicture}
	\caption{The procedure applied to a word $w(s)$ of length $27 = 3^{2+1}$. In this case, the word is not rejected and $u(s) = 010$ is decoded.}
	\label{figure_algoritmo}
\end{figure}

If this stage of the procedure is reached, we have for each $s \in S$ a word $u(s) \isdef u(s)_0u(s)_1\dots u(s)_n \in \{0,1\}^{n+1}$. We declare $w$ to be in $\FF_n$ if for some $s \in S$ we have that either $[u(s)] \cap  X = \emptyset$ or $[u(s)] \cap T^s([u(1_G)] \cap X) = \emptyset$.

\begin{remark}
	There is at most one way to cyclically partition a word $w(s) \in \{0,1,\cuadrito\}^{3^{n+1}}$ in segments of the form $a_i b\ \cuadrito$ as described before. Thus if the procedure produces words $(u(s))_{s \in S}$ these are unique.
\end{remark}

\begin{proposition}\label{propo_effective_subshift}
	If $(X,T)$ is an effectively closed $G$-dynamical system, then $\Topz(X,T)$ is an effectively closed $\ZZ$-subshift.
\end{proposition}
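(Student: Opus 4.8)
The plan is to prove the statement by showing directly that the set of forbidden words $\FF_{\Topz(X,T)} = \bigcup_{n \in \NN}\FF_n$ is recursively enumerable, since this is precisely what it means for the $\ZZ$-subshift $\Topz(X,T)$ to be effectively closed. The first thing I would record is that each $\FF_n$ consists only of words of length $3^{n+1}$ over the alphabet $\{0,1,\cuadrito\}^{S}$. Consequently the lengths $3^{n+1}$ are pairwise distinct, every word lies in at most one $\FF_n$, and from the length of a candidate word $w$ one can decide whether $|w| = 3^{n+1}$ for some $n$ and, if so, recover that unique $n$. Thus I never have to guess against which $\FF_n$ to test $w$.

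Next I would isolate the purely computable part of the membership test. The decoding procedure described before the proposition is a deterministic algorithm that halts on every finite input: for each $s \in S$ it performs at most $n+1$ rounds, each of which scans the component $w(s)$ for the (unique, by the preceding remark) cyclic partition into blocks of the form $a_i b\ \cuadrito$ with $b \in \{0,1\}$. Hence the predicate ``the procedure rejects $w$ already at the decoding stage, because at some round no admissible $b$ exists'' is \emph{decidable}, and whenever the decoding succeeds the procedure outputs a computable tuple $(u(s))_{s \in S}$ with each $u(s) \in \{0,1\}^{n+1}$.

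The only non-computable ingredient is the final emptiness test, and this is exactly where the hypothesis enters. Since $(X,T)$ is effectively closed, there is a Turing machine that on input $v$ accepts precisely when $[v] \cap X = \emptyset$, and a Turing machine that on input $(s,v,w')$ accepts precisely when $[v] \cap T^s([w'] \cap X) = \emptyset$. Therefore, for a word $w$ whose decoding succeeds, the two conditions $[u(s)] \cap X = \emptyset$ and $[u(s)] \cap T^s([u(1_G)] \cap X) = \emptyset$ are each semi-decidable: the relevant machine halts and accepts exactly when the condition holds. I would then assemble the pieces by dovetailing over all pairs $(w,t) \in (\{0,1,\cuadrito\}^{S})^* \times \NN$: for each pair compute $n$ from $|w|$ and discard $w$ unless $|w| = 3^{n+1}$; run the decoding and emit $w$ at once if it rejects at the decoding stage; otherwise run, for $t$ steps, the two effectively-closed machines on the inputs $u(s)$ and $(s,u(s),u(1_G))$ for each $s \in S$, emitting $w$ if any of them accepts within the time budget. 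Letting $t \to \infty$ enumerates exactly $\FF_{\Topz(X,T)}$.

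The step I expect to require the most care is this last assembly. Because the emptiness tests are only semi-decidable, the individual sets $\FF_n$ need not themselves be decidable, so one cannot simply run a single decision procedure on each candidate word. The real content is to combine a decidable rejection condition (decoding failure) with a semi-decidable acceptance condition (one of the two emptiness tests) into one recursively enumerable set, which is precisely what the dovetailing over the time parameter $t$ achieves; the remaining verifications (halting of the decoder, uniqueness of the decoded $u(s)$, and well-definedness of the inputs fed to the effectively-closed machines) are routine.
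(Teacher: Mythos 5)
Your proof is correct and follows essentially the same route as the paper's: the paper likewise observes that the decoding stage of the procedure is computable and that the final emptiness tests are semi-decidable by the definition of an effectively closed action, running the relevant machines ``in parallel'' (your dovetailing over the time budget $t$) to conclude that $\FF_{\Topz(X,T)} = \bigcup_{n\in\NN}\FF_n$ is recursively enumerable. Your write-up simply makes explicit the details (length check to recover $n$, decidability of the decoding stage, the semi-decidable versus decidable distinction) that the paper leaves implicit.
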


\begin{proof}
	The first part of the procedure can easily be implemented by a Turing machine. The existence of an algorithm which accepts the list of words $(u(s))_{s \in S}$ if and only if for some $s \in S$ either $[u(s)] \cap  X = \emptyset$ or $[u(s)] \cap T^s([u(1_G)] \cap X) = \emptyset$ is given by the definition of effectively closed action. It suffices to run that algorithm in parallel for every $s \in S$. This shows that $\FF_{\Topz(X,T)} = \bigcup_{n \in \NN} \FF_n$ is a recursively enumerable language and hence $\Topz(X,T)$ is effectively closed.
\end{proof}

Let $y \in \Topz(X,T)$ and denote its $s$th component by $y(s)$. By definition, we have that for each $n > 0$ none of the words of length $3^{n}$ appearing in $y$ belong to $\FF_{\Topz(X,T)}$ and thus any $w(s) \sqsubset y(s)$ of length $3^{n}$ defines a unique word $u_w(s) \in \{0,1\}^n$ by the procedure. Moreover, one can easily verify with the definition of $\FF_{\Topz(X,T)}$ by using the words of length $3^{n+1}$ that $u_w(s)$ does not depend on the specific choice of $w(s) \sqsubset y(s)$ but only on $y(s)$, and thus we  can define a family of functions $\gamma_n \colon S \times \Topz(X,T) \to \{0,1\}$ by $$\gamma_n(s,y) \isdef (u_{y|_{\{0,\dots 3^{n+1}-1\}}}(s))_n.$$

Said otherwise, $\gamma_n$ recovers the $n$th symbol coded by the $s$th component of $y \in \Topz(X,T)$. Furthermore, we can use the $\gamma_n$ to construct a function $\gamma \colon S \times \Topz(X,T) \to \{0,1\}^{\NN}$ defined by $\gamma(s,y)_n \isdef \gamma_n(s,y).$ By definition of $\FF_{\Topz(X,T)}$ we have that for each $n \in \NN$ then:
$$[\gamma(s,y)_0,\dots,\gamma(s,y)_n] \cap X \neq \emptyset,$$
$$[\gamma(s,y)_0,\dots,\gamma(s,y)_n] \cap T^s([\gamma(1_G,y)_0,\dots,\gamma(1_G,y)_n] \cap X) \neq \emptyset.$$

From the fact that $X$ is compact and $T$ continuous, we deduce that $\gamma(s,y) \in X$ and $T^s(\gamma(1_G,y)) = \gamma(s,y)$. Also, if for $x \in X$ we define $\widetilde{\Psi}(x)$ as the configuration in $(\{0,1,\cuadrito\}^{S})^{\ZZ}$ such that $(\widetilde{\Psi}(x)(j))(s) \isdef \Psi(T^s(x))(n)$ we can verify that $\widetilde{\Psi}(x) \in \Topz(X,T)$ and $\gamma(s,\widetilde{\Psi}(x)) = T^s(x)$. Therefore $\gamma\colon S \times \Topz(X,T) \to X$ is onto. Finally, from the fact that each $\gamma_n$ only depends on an arbitrary subword of length $3^n$ of $y$, we obtain that $\gamma(s,y) = \gamma(s,\sigma^m(y))$ for each $m \in \ZZ$ and that $\gamma$ is continuous.

Next we will make use of a known simulation theorem to lift our effectively closed $\ZZ$-subshift up to a sofic $\ZZ^2$-subshift.

\begin{theorem}[\cite{AubrunSablik2010,DurandRomashchenkoShen2010}]\label{theorem_simulacion_natalie}
	If $X$ is an effectively closed $\ZZ$-subshift, then the $\ZZ^2$-subshift $Y$ for which every $y \in Y$ satisfies $\sigma^{(0,1)}(y) = y$ and $\pi_{(\ZZ,0)}(Y) = X$ is sofic.
\end{theorem}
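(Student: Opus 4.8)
The plan is to realize the vertically-constant stacking $Y = \{y \in \ag^{\ZZ^2} : \exists x \in X,\ y(i,j) = x(i) \text{ for all } (i,j)\}$ as a factor of a $\ZZ^2$-SFT. First I would build a $\ZZ^2$-SFT $Z$ over an alphabet carrying several superimposed layers, together with a $1$-block factor map $\phi$ onto $\ag$ that simply reads off a distinguished \emph{data} layer. That data layer carries symbols of $\ag$ and is forced by nearest-neighbour rules to be constant in the vertical direction, so that $\phi(z)$ is automatically vertically invariant and determined by its bottom row; the remaining layers carry an auxiliary structure whose sole purpose is to certify that this bottom row lies in $X$. The theorem then reduces to two inclusions: every admissible data row belongs to $X$ (soundness), and every $x \in X$ extends to a configuration of $Z$ (completeness). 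Once both hold, $\phi(Z) = Y$ and $Y$ is sofic.

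Since $X$ is effectively closed, fix a Turing machine $M$ enumerating a set $\FF$ of forbidden words with $X = X_\FF$. The difficulty is that an SFT can only inspect bounded neighbourhoods, whereas forbidding words of unbounded length requires unbounded computation. I would resolve this in the standard way by forcing, through finite-type rules, a self-similar hierarchical structure that partitions the plane into nested square regions of geometrically growing sizes $N_1 < N_2 < \cdots$, thereby producing \emph{computation zones} of every scale. Concretely I would use either a Robinson-type aperiodic skeleton (as in Aubrun--Sablik) or a self-simulating fixed-point tileset (as in Durand--Romashchenko--Shen); the latter is cleaner, since a single macro-tile construction makes each zone simulate both the skeleton one level up and a bounded segment of a Turing computation. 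Inside a zone of size $N$ I would run $M$ for a time budget growing with $N$ and read the length-$O(N)$ segment of the data row crossing that zone, creating a local contradiction (hence no valid completion) whenever a forbidden word enumerated within the budget occurs as a factor of that segment. Transmitting the relevant row segment into each zone, so that the machine responsible for a zone of scale $N$ can actually read its window, is handled by dedicated wire layers that broadcast the data symbols horizontally into the computation.

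Soundness follows because any $w \in \FF$ is enumerated by $M$ after some finite time $t$; the hierarchy contains zones of arbitrarily large size, and a zone whose scale is large enough both to grant a time budget exceeding $t$ and to contain an occurrence of $w$ in its window would detect it and forbid the configuration. Hence no admissible data row contains any word of $\FF$, so the bottom row lies in $X$ and $\phi(Z) \subseteq Y$. For completeness I would verify that, for each $x \in X$, the hierarchical skeleton admits a globally valid configuration (the self-similar structure is always completable), that $x$ can be placed on the data layer, and that because $x$ avoids every word of $\FF$ none of the zone computations ever reject; this produces $z \in Z$ with $\phi(z)$ equal to the vertical stacking of $x$, giving $Y \subseteq \phi(Z)$. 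Together these yield $\phi(Z) = Y$, so $Y$ is sofic, satisfies $\sigma^{(0,1)}(y) = y$, and has $\pi_{(\ZZ,0)}(Y) = X$.

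The main obstacle is the simultaneous requirement on the hierarchical layer: it must be rigid enough that finite-type rules force computation zones of unbounded size at definite positions (otherwise long forbidden words are never tested and soundness fails), yet flexible enough that a valid global configuration exists over \emph{every} $x \in X$ regardless of the data (otherwise completeness fails). Reconciling these is exactly the technical heart of the Aubrun--Sablik and Durand--Romashchenko--Shen constructions, and I would import their self-similar tileset essentially wholesale, adapting only the verification machine to the decision "is $w$ a forbidden word of $X$" supplied by the effective closedness of $X$.
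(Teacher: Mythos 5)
The paper does not prove this statement at all: it is imported verbatim as a known simulation theorem from the cited works of Aubrun--Sablik and Durand--Romashchenko--Shen, and used as a black box. Your sketch is a faithful outline of exactly those cited constructions (vertically-constant data layer, hierarchical/fixed-point computation zones running the enumerating machine, soundness/completeness of the resulting factor map), and it correctly identifies the rigidity-versus-completability tension as the technical heart that those references resolve, so it takes essentially the same approach as the source the paper relies on.
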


Using \Cref{theorem_simulacion_natalie} we obtain a sofic $\ZZ^2$-subshift which we call $\Topzz(X,T)$. As every row in a configuration in $\Topzz(X,T)$ is the same, we can naturally extend the definition of $\gamma$ to this subshift by restricting to $\ZZ \times \{0\}$. We summarize the important points of all that has been constructed in this subsection in the following lemma.

\begin{lemma}\label{lemma_allforone}
	There exists a sofic $\ZZ^2$-subshift $\Topzz(X,T)$ and a continuous function $\gamma \colon S \times \Topzz(X,T) \to X$ with the following properties:
	\begin{enumerate}
		\item For each $s \in S$, $y \mapsto \gamma(s,y)$ is onto.
		\item For any  $z \in \ZZ^2$, $\gamma(s,y) = \gamma(s, \sigma^z(y)).$
		\item $\gamma(s,y)= T^s(\gamma(1_G,y))$.
	\end{enumerate}
\end{lemma}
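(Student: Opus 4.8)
The plan is to assemble this lemma directly from the material already developed in this subsection, since it is essentially a packaging of the objects constructed above. First I would take $\Topzz(X,T)$ to be precisely the sofic $\ZZ^2$-subshift furnished by \Cref{theorem_simulacion_natalie} applied to the effectively closed $\ZZ$-subshift $\Topz(X,T)$, whose effectiveness is guaranteed by \Cref{propo_effective_subshift}. By that theorem, every $y \in \Topzz(X,T)$ satisfies $\sigma^{(0,1)}(y) = y$, so all of its rows coincide, and $\pi_{(\ZZ,0)}(\Topzz(X,T)) = \Topz(X,T)$.

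Next I would define $\gamma$ on $\Topzz(X,T)$ by reduction to the one-dimensional case. Given $y \in \Topzz(X,T)$, its restriction $y|_{\ZZ \times \{0\}}$ to the horizontal axis is an element of $\Topz(X,T)$, and I would set $\gamma(s,y)$ to be the value of the already-constructed function $\gamma \colon S \times \Topz(X,T) \to X$ on this restriction. Concretely $\gamma(s,y)_n \isdef \gamma_n(s, y|_{\ZZ \times \{0\}})$, so that $\gamma(s,y)$ is read off from finite horizontal windows exactly as before.

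The three claimed properties then transfer from the one-dimensional setting. Continuity and surjectivity are inherited: each coordinate $\gamma_n$ depends only on a finite window on the horizontal axis, giving continuity, and since the horizontal projection maps $\Topzz(X,T)$ onto $\Topz(X,T)$ while the latter already surjects onto $X$ under $\gamma$, the composition is onto. Property (2), $\gamma(s,y) = T^s(\gamma(1_G,y))$, is exactly the identity established above for $\Topz(X,T)$, applied to the restriction $y|_{\ZZ \times \{0\}}$. For property (1) I would split the shift action: invariance under horizontal shifts $\sigma^{(m,0)}$ follows from the one-dimensional fact that $\gamma(s,\cdot)$ is unchanged under $\sigma^m$, since each $\gamma_n$ only depends on some length-$3^n$ subword, while invariance under the vertical shift $\sigma^{(0,1)}$ is immediate because every configuration is vertically periodic, i.e. $\sigma^{(0,1)}(y) = y$. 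Combining the two directions yields invariance under all of $\ZZ^2$.

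I do not anticipate a genuine obstacle here, as the substantive work, namely the Toeplitz encoding, the effectiveness of $\Topz(X,T)$, and the sofic lift via \Cref{theorem_simulacion_natalie}, is already complete. The only point requiring care is to verify the full $\ZZ^2$-invariance in property (1) by treating the horizontal and vertical directions separately, the former through the recurrence of the Toeplitz structure and the latter through the vertical periodicity of $\Topzz(X,T)$.
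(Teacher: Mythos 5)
Your proposal is correct and follows essentially the same route as the paper: the paper likewise obtains $\Topzz(X,T)$ by applying \Cref{theorem_simulacion_natalie} to the effectively closed subshift $\Topz(X,T)$ and extends $\gamma$ by restricting configurations to the row $\ZZ \times \{0\}$, with all properties (continuity, surjectivity, shift-invariance via vertical periodicity plus the $3^n$-window argument, and the intertwining identity) inherited from the one-dimensional construction. The only difference is that the paper states this packaging in two sentences, whereas you spell out the verifications explicitly.
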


\subsection{Finding a grid in $H_1 \times H_2$}\label{subsecgrid}

Let $H_1,H_2$ be infinite and finitely generated groups. The aim of this section is to construct an $(H_1 \times H_2)$-SFT which emulates copies of $\ZZ^2$. If both $H_1$ and $H_2$ had a non-torsion element this would be an easy task, however, this is not true in general. We bypass this restriction by using the notion of translation-like action introduced by Whyte~\cite{Whyte1999}. Instead of having the rigidity of a proper translation, translation-like actions only ask for the action to be free and such that each element of the acting group moves elements of the set a uniformly bounded distance away. Formally,

\begin{definition}
	A left action of a group $G$ over a metric space $(X,d)$ is translation-like if and only if it satisfies:
	\begin{enumerate}
		\item $G \curvearrowright X$ is free, that is, $gx = x$ implies $g = 1_G$.
		\item For every $g \in G$ the set $\{d(gx,x) \mid x \in X\}$ is bounded.
	\end{enumerate}
\end{definition}

This notion gives a proper setting to define geometric analogues of classical disproved conjectures in group theory concerning subgroup containments. For instance, the Burnside conjecture and the Von Neumann conjecture can be reinterpreted geometrically as the question of whether every infinite and finitely generated group admits a translation-like action by $\ZZ$ or by a non-abelian free group respectively.

In what concerns our study, we are only going to make use of the following result from Seward~\cite{Seward2014} which is the positive answer to the geometric version of the Burnside conjecture:

\begin{theorem}[\cite{Seward2014},~Theorem~1.4]\label{teorema_seward}
	Every finitely generated infinite group admits a translation-like action of $\ZZ$.
\end{theorem}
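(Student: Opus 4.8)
The plan is to reformulate the statement combinatorially and then split into an algebraic easy case and a genuinely geometric hard case. A translation-like action of $\ZZ$ on $G$ (with its left-invariant word metric $d$) is the same data as a single bijection $f \colon G \to G$ playing the role of ``$+1$'', subject to two conditions: $f$ has \emph{bounded displacement}, meaning $R \isdef \sup_{g \in G} d(f(g),g) < \infty$, and $f$ is \emph{aperiodic}, meaning $f^n(g) = g$ forces $n = 0$. Indeed, given such an $f$ one sets $n \cdot g \isdef f^n(g)$; freeness is exactly aperiodicity, and $d(n \cdot g, g) \le |n| R$ by the triangle inequality, which is bounded for each fixed $n$. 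Conversely, the generator of any translation-like $\ZZ$-action is such an $f$. Geometrically, an aperiodic bounded-displacement bijection is precisely a partition of the vertex set $G$ of the Cayley graph into bi-infinite paths whose consecutive vertices lie within distance $R$; equivalently, a spanning subgraph of the graph $\Gamma_R$ (with $g \sim h$ iff $0 < d(g,h) \le R$) in which every vertex has degree exactly $2$ and no connected component is a finite cycle.

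First I would dispose of the case where $G$ contains an element $a$ of infinite order. Here one takes $f(g) \isdef ga$. The displacement $d(ga,g) = |a^{-1}| = |a|$ is constant by left invariance, and the orbits $\{ga^n : n \in \ZZ\}$ are all infinite because $a$ has infinite order, so $f$ is aperiodic. Thus $n \cdot g \isdef ga^n$ is a translation-like action of $\ZZ$ and we are done. Hence the whole difficulty is concentrated in the torsion case, where $G$ is an infinite finitely generated periodic group (such as a Burnside group or the Grigorchuk group); here no cyclic subgroup is available and the action must be built by hand on the Cayley graph.

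For the torsion case I would produce the required spanning subgraph of $\Gamma_R$ by a compactness argument in the compact space $\{0,1\}^{E(\Gamma_R)}$ of edge sets (this is compact by Tychonoff, as $\Gamma_R$ is locally finite and $G$ is countable). The target set of configurations --- those in which every vertex has degree exactly $2$ and no finite cycle occurs --- is an intersection of closed conditions (``$\deg(v) = 2$'' and ``no cycle of length $\le n$ through $v$'' are each clopen), so by the finite intersection property it suffices to build, on each finite piece of $G$, a partial solution that is coherent and extendable, and then patch these together. Concretely I would exhaust $G$ by an increasing sequence of finite sets and grow a \emph{linear forest} (a disjoint union of finite and bi-infinite paths), at each stage absorbing newly encountered vertices by attaching them to the dangling endpoints of existing paths or starting new arcs, while maintaining the invariant that every path endpoint can still be pushed off toward infinity. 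The reason such extensions always exist is that $\Gamma_R$ is infinite, connected and locally finite, so by K\"onig's lemma every vertex is the origin of a one-way infinite ray, and removing any finite set leaves an infinite component (otherwise, since only finitely many components meet the finite removed set, the whole graph would be finite). This lets us always route an endpoint outward rather than being forced to close a finite cycle.

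The step I expect to be the main obstacle is exactly this last piece of bookkeeping: guaranteeing that the greedy extension never paints itself into a corner, i.e. is never forced either to leave a vertex permanently of degree $1$ or to close up a finite cycle. Controlling this requires a careful choice of $R$ and of the order in which vertices are absorbed, together with a use of the end structure of $G$ to ensure that enough \emph{pairwise disjoint} escape rays are simultaneously available for all the currently dangling endpoints. Making these escapes disjoint --- so that two growing paths cannot collide and be forced to merge into a finite cycle --- is the delicate point, and it is here that the infiniteness, connectedness and local finiteness of the Cayley graph must be exploited most carefully.
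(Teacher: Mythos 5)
This statement is never proved in the paper: it is Seward's theorem, quoted from \cite{Seward2014} and used as a black box (its only role here is in the proof of \Cref{proposition_ofseward}, to produce a generating set and a configuration of $\Grid_i$ inducing a free action). So your attempt can only be measured against Seward's own argument. Your reformulation is correct: a translation-like $\ZZ$-action on $G$ is the same thing as a bijection $f$ of $G$ with bounded displacement and no periodic points, equivalently a partition of the vertices of some bounded power $\Gamma_R$ of the Cayley graph into bi-infinite paths; and the infinite-order case via $f(g)=ga$ is fine. The problem is that for torsion groups --- the only case with any content, and exactly the case this paper needs for the Grigorchuk group --- you stop at a plan: you yourself flag that the ``bookkeeping'' guaranteeing the greedy construction never gets stuck, and the existence of pairwise disjoint escape rays for all dangling endpoints, is the main obstacle, and you do not resolve it. That unresolved step \emph{is} the theorem, so as written the proposal is not a proof.

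The gap is real, but it is also partly an artifact of how you set up the limiting argument: you conflate compactness with a sequential construction whose stages must be coherent and extendable. If you use the finite intersection property honestly in $\{0,1\}^{E(\Gamma_R)}$, then for each finite $F \subset G$ you only need to exhibit \emph{some} edge configuration in which every vertex of $F$ has degree exactly $2$ and no cycle meets $F$; solutions for different $F$ need not be consistent with one another, so nothing can paint itself into a corner and no disjoint-ray analysis is required. Such a partial solution exists already with $R=3$ by a classical finite lemma (Sekanina; also Karaganis): the cube of any finite connected graph is Hamiltonian-connected. Concretely, fix a spanning tree $T$ of the Cayley graph, let $T_F$ be the finite subtree of $T$ spanned by $F$ together with two vertices $w_1,w_2 \notin F$, and take a Hamiltonian path of $T_F^3$ from $w_1$ to $w_2$: it covers $F$, all its interior vertices have degree $2$, it contains no cycle, and its endpoints lie outside $F$. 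Compactness then yields a $2$-regular, cycle-free spanning subgraph of $\Gamma_3$, i.e.\ a partition of $G$ into bi-infinite paths, and orienting each path gives the desired action. This spanning-tree-plus-cube mechanism, not end-structure bookkeeping, is the engine behind Seward's proof (as the title of \cite{Seward2014} hints); it is precisely the idea missing from your sketch, and without it the torsion case remains open.
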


Let us remark that~\Cref{teorema_seward} has already been used by Jeandel in~\cite{Jeandel2015_translation} to show that the Domino problem --that is, the problem of deciding whether a finite set of forbidden patterns defines a non-empty subshift-- is undecidable for groups of the form $H_1 \times H_2$ where both $H_i$ are infinite and finitely generated. He also showed that groups containing such a product as a subgroup have undecidable Domino problem and admit weakly aperiodic SFTs, that is, an SFT such that every configuration has an infinite orbit. Here we make use of the same technique to prove our result.

Before proceeding formally, let us first explain how we plan on using~\Cref{teorema_seward} to construct a grid-like structure on an arbitrary product of two infinite and finitely generated groups $H_1 \times H_2$. Fix an arbitrary finite set of generators $S_1$ for $H_1$ and consider the full $H_1$-shift on alphabet $S_1 \times S_1$. One may regard a configuration on $(S_1 \times S_1)^{H_1}$ as a list of labels on every element of $H_1$ indicating a left and right neighbour. Furthermore, restrict the set of allowable configurations by imposing the constraint that following the right neighbour and then, on the element just reached, following the left neighbour, one must end up in the initial element. We can interpret each configuration which satisfies this constraint as a coding of an action $\ZZ \curvearrowright H_1$. 

An action coded by a configuration as previously explained is bounded but not necessarily free. In fact, the orbit of each $h \in H_1$ may be either a finite cycle or an infinite copy of $\ZZ$. A priori, there may not exist any configurations such that each orbit is infinite. We shall use~\Cref{teorema_seward} to obtain that there exists at least one set of generators for which there exists a configuration which codes an action on which each orbit is free. From a geometric point of view, this means that there exists a finite set of generators $S_1$ for which the associated Cayley graph of $H_1$ can be partitioned in disjoint bi-infinite paths.

Proceed analogously on $H_2$. We have just obtained that there are configurations on $H_1$ and $H_2$ which represent partitions of the respective Cayley graphs on disjoint bi-infinite paths, see Figure~\ref{figure_grilla}. If we take the product of these two SFTs we obtain an $(H_1 \times H_2)$-SFT $\Grid$ for which each configuration represents an action $\ZZ^2 \curvearrowright H_1 \times H_2$ and there exists at least one of them which is free. In other words, the Cayley graph of $H_1 \times H_2$ is partitioned by copies of $\ZZ^2$.

\begin{figure}[h!]
	\centering
	\begin{tikzpicture}[scale=0.5]
	\begin{scope}[shift={(0,0)}]
	\node at (0,-4) {$H_1$};
	\clip[draw, decorate,decoration={random steps, segment length=2pt, amplitude=1pt}] (0,0) circle (3);
	\draw plot [smooth, tension=0.4] coordinates {(-3,4) (-0.5,2.5) (2,5)};
	\draw plot [smooth, tension=0.4] coordinates {(-3,3) (-0.5,2) (2,4)};
	\draw plot [smooth, tension=0.4] coordinates {(-3,2) (-1.5,2) (-0.4,1.5) (4,5)};
	\draw plot [smooth, tension=0.4] coordinates {(-3,1) (-2,1.5) (0.5,-0.5) (-0.4,1.2) (4,4)};
	\draw plot [smooth, tension=0.4] coordinates {(-3,0) (-2,1)(1,-1) (0,1) (4,4)};
	\draw  plot [smooth, tension=0.4] coordinates {(-3,-1) (-1.5,0)(1.4,-1.3) (0.5,0.5) (4,3)};
	\draw plot [smooth, tension=0.4] coordinates {(-3,-2) (-1.5,-0.7) (-1.3,-1.5) (-2,-1.8) (-1.8, -2.2) (-0.8,-2) (-0.5,-0.8) (1.6,-1.8) (2,-1.6) (1.5,0) (4,2)};
	\draw [very thick, black] plot [smooth, tension=0.4] coordinates {(-3,-3)  (-0.4,-2.2) (-0,-1.6) (1.6,-2.2) (2.2,-1.8) (1.9,-0.5) (4,1)};
	\draw plot [smooth, tension=0.4] coordinates {(-3,-3.5) (-0.4,-2.5) (2,-3)};
	\end{scope}
	\begin{scope}[shift={(8,0)}]
	\node at (0,-4) {$H_2$};
	\clip[draw, decorate,decoration={random steps, segment length=2pt, amplitude=1pt}] (0,0) circle (3);
	\draw plot [smooth, tension=0.4] coordinates {(2.5,-3) (2.5,-0.5) (3.5,3)};
	\draw plot [smooth, tension=0.4] coordinates {(2,-3) (2,-1) (3,3)};
	\draw plot [smooth, tension=0.4] coordinates {(1.5,-3) (1.6,0) (2.5,3)};
	\draw plot [smooth, tension=0.4] coordinates {(1,-3) (1,-0.5) (1.4,0.8) (2,3)};
	\draw plot [smooth, tension=0.4] coordinates {(0.5,-3) (0.4,-0.5) (1,0.5) (1,3)};
	\draw [very thick, black] plot [smooth, tension=0.4] coordinates {(0,-3) (-0.2,-0.5) (0.2,0.5) (0,3)};
	\draw plot [smooth, tension=0.4] coordinates {(-0.5,-3) (-2,-0.6) (-1.4,-0.4) (-0.4,-2.5) (-0.7,0) (-0.2,1) (-0.5,3)};
	\draw plot [smooth, tension=0.4] coordinates {(-1,-3) (-2.6,-0) (-1.5,0.4) (-0.6,1) (-1,3)};
	\draw plot [smooth, tension=0.4] coordinates {(-1.5,-3) (-3,0.5) (-2,0.8) (-1.2,1.2) (-2,3)};
	\draw plot [smooth, tension=0.4] coordinates {(-3,1.5) (-2,1.5) (-3,3)};
	\end{scope}
	\begin{scope}[shift={(15,-0.5)}, scale = 0.6]
	\draw [name path=l1,<->,thick, shift={(-0.1,0)}] plot [smooth, tension=0.4] coordinates {(-3,-3)  (-0.4,-2.2) (-0,-1.6) (1.6,-2.2) (2.2,-1.8) (1.9,-0.5) (4,1)};
	\draw [name path=l2,<->,thick, shift={(-0.1,2)}] plot [smooth, tension=0.4] coordinates {(-3,-3)  (-0.4,-2.2) (-0,-1.6) (1.6,-2.2) (2.2,-1.8) (1.9,-0.5) (4,1)};
	\draw [name path=l3,<->,thick, shift={(-0.1,4)}] plot [smooth, tension=0.4] coordinates {(-3,-3)  (-0.4,-2.2) (-0,-1.6) (1.6,-2.2) (2.2,-1.8) (1.9,-0.5) (4,1)};
	\draw [name path=la,<->,thick, shift={(-2,0)}] plot [smooth, tension=0.4] coordinates {(0,-3.5) (-0.2,-0.5) (0.2,0.5) (0,3) (0,5)};
	\draw [name path=lb,<->,thick, shift={(-0.5,0)}] plot [smooth, tension=0.4] coordinates {(0,-3.5) (-0.2,-0.5) (0.2,0.5) (0,3) (0,5)};
	\draw [name path=lc,<->,thick, shift={(1,0)}] plot [smooth, tension=0.4] coordinates {(0,-3.5) (-0.2,-0.5) (0.2,0.5) (0,3) (0,5)};
	\draw [name path=ld,<->,thick, shift={(2.5,0)}] plot [smooth, tension=0.4] coordinates {(0,-3.5) (-0.2,-0.5) (0.2,0.5) (0,3) (0,5)};
	\draw[name intersections={of=l1 and la,total=\t}, thick, fill = black!50]
	{(intersection-1) circle (5pt)};
	\draw[name intersections={of=l1 and lb,total=\t}, thick, fill = black!50]
	{(intersection-1) circle (5pt)};
	\draw[name intersections={of=l1 and lc,total=\t}, thick, fill = black!50]
	{(intersection-1) circle (5pt)};
	\draw[name intersections={of=l1 and ld,total=\t}, thick, fill = black!50]
	{(intersection-1) circle (5pt)};
	
	\draw[name intersections={of=l2 and la,total=\t}, thick, fill = black!50]
	{(intersection-1) circle (5pt)};
	\draw[name intersections={of=l2 and lb,total=\t}, thick, fill = black!50]
	{(intersection-1) circle (5pt)};
	\draw[name intersections={of=l2 and lc,total=\t}, thick, fill = black!50]
	{(intersection-1) circle (5pt)};
	\draw[name intersections={of=l2 and ld,total=\t}, thick, fill = black!50]
	{(intersection-1) circle (5pt)};
	
	\draw[name intersections={of=l3 and la,total=\t}, thick, fill = black!50]
	{(intersection-1) circle (5pt)};
	\draw[name intersections={of=l3 and lb,total=\t}, thick, fill = black!50]
	{(intersection-1) circle (5pt)};
	\draw[name intersections={of=l3 and lc,total=\t}, thick, fill = black!50]
	{(intersection-1) circle (5pt)};
	\draw[name intersections={of=l3 and ld,total=\t}, thick, fill = black!50]
	{(intersection-1) circle (5pt)};
	
	\end{scope}
	\end{tikzpicture}
	\caption{Finding a grid in $H_1 \times H_2$}
	\label{figure_grilla}
\end{figure}
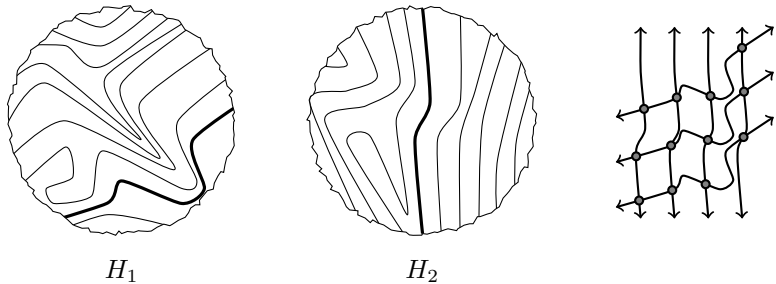

Finally, we shall use the structure of $\Grid$ to embed a nearest neighbour $\ZZ^2$-SFT $Y$.

Let us now proceed formally. For $i \in \{1,2\}$ consider a finite and symmetric set of generators $S_i$ for $H_i$ which contains the identity. Let $\mathcal{G}_i = S_i \times S_i$ be the alphabet of all pairs of generators. For $\mathsf{s} = (s_1,s_2) \in \mathcal{G}_i$ denote 

\begin{align*}
\texttt{left}_i(\mathsf{s}) = s_1, & &\texttt{right}_i(\mathsf{s}) = s_2.
\end{align*}

We may think of $\texttt{left}_i$ and $\texttt{right}_i$ as labels on each $h \in H_i$ pointing towards a left and right neighbour. Let $\Grid_i \subset \mathcal{G}_i^{H_i}$ be the subshift defined by forbidding all patterns $p$ with support on $\{1_{H_i},s\}$ for some $s \in S_i$ and such that either
\begin{itemize}
	\item $\texttt{right}_i(p(1_{H_i})) = s$ but $\texttt{left}_i(p(s)) \neq s^{-1}$.
	\item $\texttt{left}_i(p(1_{H_i})) = s$ but $\texttt{right}_i(p(s)) \neq s^{-1}$.
\end{itemize}

There are at most finitely many such patterns, and thus $\Grid_i$ is an $H_i$-SFT. Because of this constraint, each $\omega_i \in \Grid_i$ defines an action $[\omega_i]: \ZZ \curvearrowright H_i$ as follows:
\begin{align*}
[\omega_i](1,h) \isdef h\cdot \texttt{right}_i(\omega_i(h)), & &[\omega_i](-1,h) \isdef h \cdot \texttt{left}_i(\omega_i(h)).
\end{align*}

The forbidden patterns ensure that this is a $\ZZ$-action, more precisely, we have that $[\omega_i](-1,[\omega_i](1,h)) = [\omega_i](1,[\omega_i](-1,h)) = h$.

\begin{proposition}\label{proposition_ofseward}
	There exists a finite and symmetric set of generators $S_i$ for $H_i$  which contains the identity $1_{H_i}$ and such that the subshift $\Grid_i$ defined using that set of generators contains a configuration $\omega_i$ which acts freely on $H_i$.
\end{proposition}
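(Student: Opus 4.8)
The plan is to realize the translation-like action furnished by \Cref{teorema_seward} directly as a configuration of $\Grid_i$; the boundedness of that action is precisely what makes a single finite generating set sufficient. First I would fix an arbitrary finite symmetric generating set $S$ of $H_i$ containing $1_{H_i}$ and apply \Cref{teorema_seward} to obtain a translation-like action $* \colon \ZZ \curvearrowright H_i$ on the Cayley graph of $(H_i,S)$. By the boundedness condition applied to $1$ and $-1$ in $\ZZ$, there is a constant $R \in \NN$ with $|h^{-1}(1*h)| \le R$ and $|h^{-1}((-1)*h)| \le R$ for every $h \in H_i$, where $|\cdot|$ denotes word length with respect to $S$ (here I use that the word metric is left-invariant, so the displacement of $h$ equals $|h^{-1}(n*h)|$). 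The key observation is that, although the displacement elements $h^{-1}(n*h)$ vary with $h$, they all lie inside the fixed finite ball of radius $R$.

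This dictates the choice of generators: let $S_i \isdef \{g \in H_i \mid |g| \le R\}$ be that ball. It is finite, symmetric (since $S$ is), contains $1_{H_i}$, and generates $H_i$ because $S \subseteq S_i$. I would then define the candidate configuration $\omega_i \in \mathcal{G}_i^{H_i}$ over the alphabet $\mathcal{G}_i = S_i \times S_i$ by setting, for each $h \in H_i$,
\[
\texttt{right}_i(\omega_i(h)) \isdef h^{-1}(1*h), \qquad \texttt{left}_i(\omega_i(h)) \isdef h^{-1}((-1)*h).
\]
The previous paragraph guarantees that both values lie in $S_i$, so $\omega_i$ is a well-defined configuration.

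Next I would verify that $\omega_i \in \Grid_i$, i.e. that it avoids the two families of forbidden patterns. Writing $s = \texttt{right}_i(\omega_i(h)) = h^{-1}(1*h)$, so that $hs = 1*h$, I must check $\texttt{left}_i(\omega_i(hs)) = s^{-1}$. Since $*$ is a $\ZZ$-action, $(-1)*(hs) = (-1)*(1*h) = 0*h = h$, whence $\texttt{left}_i(\omega_i(hs)) = (hs)^{-1}\bigl((-1)*(hs)\bigr) = (1*h)^{-1}h = s^{-1}$, as required; the symmetric computation starting from the left label handles the other forbidden family. Hence none of the forbidden patterns occur and $\omega_i \in \Grid_i$.

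Finally I would identify the induced action with $*$ and conclude freeness. By construction $[\omega_i](1,h) = h\cdot \texttt{right}_i(\omega_i(h)) = 1*h$ and $[\omega_i](-1,h) = (-1)*h$, so $[\omega_i]$ and $*$ agree on the generator $1 \in \ZZ$ and therefore coincide as $\ZZ$-actions on $H_i$. Since $*$ is free by \Cref{teorema_seward}, so is $[\omega_i]$, which is exactly what the statement asks. I do not expect a genuine obstacle here: the entire dynamical content is imported from \Cref{teorema_seward}, and the only point that must be handled carefully is that the uniform bound $R$ on displacements is what allows the finite, local alphabet defining $\Grid_i$ to encode the global action — without boundedness there would be no finite generating set capable of recording all the displacement vectors at once.
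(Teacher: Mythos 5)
Your proposal is correct and follows essentially the same route as the paper's proof: both invoke \Cref{teorema_seward}, take $S_i$ to be the ball whose radius is the uniform displacement bound of the translation-like action, define $\omega_i(h)$ from the displacements $h^{-1}f_i(\pm 1,h)$, and conclude freeness by identifying the induced action $[\omega_i]$ with the translation-like action itself. Your explicit verification that $\omega_i$ avoids the forbidden patterns and that $S_i$ generates (via $S \subseteq S_i$) only makes explicit what the paper leaves implicit.
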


\begin{proof}
	By \Cref{teorema_seward} there exists a translation-like action $f_i\colon \ZZ \curvearrowright H_i$. Fix a preliminary set of generators $\bar{S}_i$ of $H_i$, consider the associated generator metric $\bar{d}_i$ on $H_i$ and define $$S_i \isdef \left\{ s \in H_i \mid \bar{d}_i(1_{H_i},s) \leq \sup_{h \in H_i} \bar{d}_i(h, f_i(1,h)) \right\}.$$ 
	As $f_i$ is a translation-like, we know that the distance from $f_i(1,h)$ to $h$ is uniformly bounded, therefore $S_i$ is finite. Furthermore, $S_i$ contains $1_{H_i}$ and is symmetric. We claim that $S_i$ satisfies the requirements. Indeed, we can define $\omega_i \in \Grid_i$ by:
	$$\omega_i(h) \isdef (h^{-1}f_i(-1,h), h^{-1}f_i(1,h)).$$
	As $f_i$ is uniformly bounded and by definition of $S_i$ both $h^{-1}f_i(-1,h)$ and $h^{-1}f_i(1,h)$ are in $S_i$. Furthermore, we have \begin{align*}
	[\omega_i](1,h) &= h \cdot \texttt{right}(\omega_i(h))= h \cdot (h^{-1}f_i(1,h)) = f_i(1,h)\\
	[\omega_i](-1,h) &= h \cdot \texttt{left}(\omega_i(h))= h \cdot (h^{-1}f_i(-1,h)) = f_i(-1,h)
	\end{align*}
	and thus the action $[\omega_i]$ induced by $\omega_i$ is the same as $f_i$, which is free.
\end{proof}

For the rest of the section, let $S_1,S_2$ be sets of generators for $H_1$ and $H_2$ respectively which satisfy the conditions of~\Cref{proposition_ofseward}. We can extend the $H_1$-SFT $\Grid_1$ to an $(H_1\times H_2)$-SFT $\overline{\Grid}_1$ by imposing that $H_2$ acts trivially. That is, $\overline{\omega} \in \overline{\Grid}_1$ if and only if $\overline{\omega}(h_1,h_2) = \overline{\omega}(h_1, 1_{H_2})$ for every $h_2 \in H_2$ and $\overline{\omega}|_{H_1 \times \{1_{H_2}\}} \in \Grid_1$. Analogously, we extend $\Grid_2$ to an $(H_1\times H_2)$-SFT $\overline{\Grid}_2$ by imposing that $H_1$ acts trivially. Finally, we define $\Grid \isdef \overline{\Grid}_1 \times \overline{\Grid}_2$. By definition, if $\omega = (\overline{\omega}_1,\overline{\omega}_2)$ we may naturally associate configurations $\omega_1 \in \Grid_1$ and $\omega_2 \in \Grid_2$ such that $$\omega(h_1,h_2) = (\overline{\omega}_1(h_1,h_2),\overline{\omega}_2(h_1,h_2)) = (\omega_1(h_1),\omega_2(h_2)).$$

In particular, each $\omega \in \Grid$ induces a $\ZZ^2$-action $[\omega] = [\omega_1]\times [\omega_2]$ on $H_1 \times H_2$, and by~\Cref{proposition_ofseward} there is one $\omega$ such that the action is free.

Finally, we shall use the subshift $\Grid$ to embed nearest neighbour $\ZZ^2$-subshifts. Let $Y \subset (\ag_Y)^{\ZZ^2}$ be a $\ZZ^2$-SFT given by a nearest neighbour set of forbidden patterns $\FF_Y$ over some finite alphabet $\ag_Y$. We define the $(H_1\times H_2)$-SFT $\Grid(Y)$ over the alphabet $\mathcal{G}_1 \times \mathcal{G}_2 \times \ag_Y$ as the subset of configurations in $\Grid \times \ag_Y^{H_1 \times H_2}$ which is further constrained by an additional set of forbidden patterns $\FF_{\Grid(Y)}$.

Before formally defining $\FF_{\Grid(Y)}$, let us explain the geometric meaning of $\Grid(Y)$. If we denote an element of $\Grid(Y)$ as a pair $(\omega,y) \in \Grid \times \ag_Y^{H_1 \times H_2}$. We have that $\omega$ is an element of $\Grid$ and as such indicates the neighbours on the simulated $\ZZ^2$-grid. The second component $y$ is adding a symbol from $\ag_Y$ to each element of $H_1 \times H_2$. We shall define $\FF_{\Grid(Y)}$ in such a way that reading the symbols in $y$ in a simulated $\ZZ^2$ grid according to $\omega$ yields a valid configuration in $Y$. Let us remark that a pattern $p$ over the alphabet $\mathcal{G}_1 \times \mathcal{G}_2 \times \ag_Y$ can be seen as a pair $(p_1,p_2) \in (\mathcal{G}_1 \times \mathcal{G}_2)^{\supp(p)} \times (\ag_Y)^{\supp(p)}$.

Let us define $\FF_{\Grid(Y)}$ as the set of patterns $p = (p_1,p_2)$ with support $\supp(p) = S_1 \times S_2$ such that the following condition holds:

Define $q_{p,\texttt{hor}} \in (\ag_Y)^{\{(0,0),(1,0)\}}$ and $q_{p,\texttt{ver}} \in (\ag_Y)^{\{(0,0),(0,1)\}}$ by setting:
\begin{align*}
q_{p,\texttt{hor}}(0,0)= p_2(1_{H_1},1_{H_2}) &  & q_{p,\texttt{hor}}(1,0) = p_2(\texttt{right}_1(p_1(1_{H_1},1_{H_2})),1_{H_2})\\
q_{p,\texttt{ver}}(0,0)= p_2(1_{H_1},1_{H_2}) &  & q_{p,\texttt{ver}}(0,1) = p_2(1_{H_1},\texttt{right}_2(p_1(1_{H_1},1_{H_2})))
\end{align*}

Declare $p \in \FF_{\Grid(Y)}$ if and only if $q_{p,\texttt{hor}} \in \FF_Y$ or $q_{p,\texttt{ver}} \in \FF_Y$. In other words, we forbid patterns where forbidden patterns from $\FF_Y$ appear along a neighbour defined by the $\Grid$ component.

\begin{remark}\label{remark_galletasfrak}
	Given a configuration $(\omega,y) \in \Grid(Y)$ and a pair $(h_1,h_2)\in H_1 \times H_2$ we can extract a configuration of $Y$ by following the action $[\omega] = [\omega_1]\times [\omega_2]$, namely, we can define a function $\mathfrak{C}\colon \Grid(Y) \times H_1 \times H_2 \to (\ag_Y)^{\ZZ^2}$ by setting for each $(z_1,z_2) \in \ZZ^2$
	\begin{align*}
	\mathfrak{C}((\omega,y),h_1,h_2)(z_1,z_2) & \isdef y([\omega]((z_1,z_2),(h_1,h_2)))\\
	& = y([\omega_1](z_1,h_1),[\omega_2](z_2,h_2)).
	\end{align*}
	
	In simpler words, $\mathfrak{C}((\omega,y),h_1,h_2)$ is the configuration obtained by reading $y$ along the two-dimensional grid defined by $[\omega]$ where the pair $(h_1,h_2)$ is identified to $(0,0)$. By definition of $\FF_{\Grid(Y)}$, we have that $\mathfrak{C}((\omega,y),h_1,h_2) \in Y$ as no forbidden patterns from $\FF_Y$ can occur. 
\end{remark}

\begin{remark}
	If the action $[\omega]$ is not free there exist $(h_1,h_2)$ and $(z_1,z_2) \in \ZZ^2 \setminus \{(0,0)\}$ such that $[\omega]((z_1,z_2),(h_1,h_2)) = (h_1,h_2)$ and this implies that $\mathfrak{C}((\omega,y),h_1,h_2){(z_1,z_2)} =\mathfrak{C}((\omega,y),h_1,h_2){(0,0)}$. In particular, if $Y$ is a nearest neighbour strongly aperiodic $\ZZ^2$-subshift, for instance, a nearest neighbour recoding of the Robinson tiling~\cite{Robinson1971} we have that every pair $(\omega,y) \in \Grid(Y)$ must satisfy that $[\omega]$ is free.
\end{remark}

Potentially, the subshift $\Grid(Y)$ may be empty. In the next proposition we show that this is not the case.

\begin{proposition}\label{propo_relacion}
	Let $Y$ be a nearest neighbour $\ZZ^2$-subshift. For each $c \in Y$ there exists $(\omega,y) \in \Grid(Y)$ such that for every $(h_1,h_2) \in H_1 \times H_2$ we have that $\mathfrak{C}((\omega,y),h_1,h_2) \in \Orb_{\sigma}(c)$ and $c = \mathfrak{C}((\omega,y),1_{H_1},1_{H_2})$.
\end{proposition}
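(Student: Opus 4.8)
The plan is to take a \emph{free} configuration $\omega \in \Grid$, use it to partition $H_1 \times H_2$ into disjoint copies of $\ZZ^2$, and then paint each copy with a suitable shift of the fixed configuration $c$. First I would invoke \Cref{proposition_ofseward} to fix generating sets $S_1,S_2$ and free configurations $\omega_1 \in \Grid_1$, $\omega_2 \in \Grid_2$. Setting $\omega = (\overline{\omega}_1,\overline{\omega}_2) \in \Grid$, the induced action $[\omega] = [\omega_1]\times[\omega_2]$ is free on $H_1 \times H_2$, since $[\omega]((z_1,z_2),(h_1,h_2)) = (h_1,h_2)$ forces $[\omega_1](z_1,h_1)=h_1$ and $[\omega_2](z_2,h_2)=h_2$, whence $z_1=z_2=0$ by freeness of each factor. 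Consequently $H_1 \times H_2$ decomposes into $[\omega]$-orbits, each a faithful copy of $\ZZ^2$, and on every orbit the map $z \mapsto [\omega](z,\cdot)$ is a bijection of $\ZZ^2$ onto the orbit.

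Next I would define $y$. Choosing one representative in each orbit, and taking $(1_{H_1},1_{H_2})$ as the representative of its own orbit, I would set, for a representative $(h_1^*,h_2^*)$,
$$y\bigl([\omega]((z_1,z_2),(h_1^*,h_2^*))\bigr) \isdef c(z_1,z_2).$$
The crux of the argument, and the step I expect to be the main obstacle, is the well-definedness of $y$: an element of $H_1\times H_2$ must receive a single symbol regardless of how it is expressed through the action. This is exactly where freeness is indispensable, as it guarantees that each element has a unique orbit coordinate $(z_1,z_2)$ relative to its representative, so the assignment is unambiguous. Once this is secured, every remaining step is a routine verification.

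Then I would verify the two asserted properties. For arbitrary $(h_1,h_2)$ lying in the orbit of $(h_1^*,h_2^*)$ with coordinates $(m_1,m_2)$, the $\ZZ^2$-action identity $[\omega]((z_1,z_2),(h_1,h_2)) = [\omega]((z_1+m_1,z_2+m_2),(h_1^*,h_2^*))$ gives
$$\mathfrak{C}((\omega,y),h_1,h_2)(z_1,z_2) = y\bigl([\omega]((z_1+m_1,z_2+m_2),(h_1^*,h_2^*))\bigr) = c(z_1+m_1,z_2+m_2),$$
which exhibits $\mathfrak{C}((\omega,y),h_1,h_2)$ as a shift of $c$, hence an element of $\Orb_\sigma(c)$. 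Taking $(h_1,h_2)=(1_{H_1},1_{H_2})$, for which $(m_1,m_2)=(0,0)$, yields $\mathfrak{C}((\omega,y),1_{H_1},1_{H_2}) = c$. Finally, to confirm $(\omega,y)\in\Grid(Y)$ I would argue that, since $Y$ is $\sigma$-invariant, each $\mathfrak{C}((\omega,y),h_1,h_2)$ belongs to $Y$; because $Y$ is nearest neighbour, every horizontal and vertical adjacent pair read along the grid is a legal pair of $Y$, so no pattern of $\FF_{\Grid(Y)}$ can occur (this is precisely the compatibility noted in \Cref{remark_galletasfrak}). Thus $(\omega,y)$ is a genuine element of $\Grid(Y)$ with the required properties.
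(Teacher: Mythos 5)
Your proposal is correct and follows essentially the same route as the paper's own proof: choose a free $\omega \in \Grid$, decompose $H_1 \times H_2$ into $[\omega]$-orbits with a set of representatives containing $(1_{H_1},1_{H_2})$, define $y$ by transporting $c$ along each orbit (with freeness securing well-definedness), and read off that every $\mathfrak{C}((\omega,y),h_1,h_2)$ is a shift of $c$. The only cosmetic difference is that you spell out the freeness of the product action and the absence of forbidden patterns in slightly more detail than the paper does.
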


\begin{proof}
	Let $\omega$ be an element of $\Grid$ such that $[\omega]$ is free. We begin by fixing an arbitrary starting point in every simulated grid, formally, define the equivalence relation over $H_1 \times H_2$ where $(h_1,h_2)\sim(h'_1,h'_2)$ if and only if there is $(z_1,z_2) \in \ZZ^2$ such that $[\omega]((z_1,z_2),(h_1,h_2)) = (h'_1,h'_2)$. Let $(h^i_1,h^i_2)_{i \in I}$ be a representing set of $(H_1 \times H_2)/\sim$ which contains $(1_{H_1},1_{H_2})$. We define $y \in \ag_Y^{H_1 \times H_2}$ by $$y({[\omega]((z_1,z_2),(h^i_1,h^i_2))}) \isdef c(z_1,z_2).$$
	By definition of $\sim$ and freeness of $[\omega]$, $y$ is well defined over all of $H_1 \times H_2$. Moreover, by definition of $\FF_{\Grid(Y)}$, we have that $(\omega,y) \in \Grid(Y)$. 
	
	Let $(z_1,z_2) \in \ZZ^2$ and $i \in I$ such that $(h_1,h_2) = [\omega]((z_1,z_2),(h^i_1,h^i_2))$. We have $\mathfrak{C}((\omega,y),h_1,h_2)(z'_1,z'_2) = c({(z_1,z_2)+(z'_1,z'_2)})$ and thus $\mathfrak{C}((\omega,y),h_1,h_2) = \sigma^{-(z_1,z_2)}(c) \in \Orb_{\sigma}(c)$. Finally, as $(1_{H_1},1_{H_2})$ belongs to the representing set we obtain $\mathfrak{C}((\omega,y),1_{H_1},1_{H_2}) = \sigma^{(0,0)}(c) = c.$
\end{proof}

\subsection{Proof of \Cref{simulationTHEOREM}}\label{subsecproof}

The final part of the proof of~\Cref{simulationTHEOREM} is quite simple, albeit a little heavy on notation. First, we extract a nearest neighbour extension $Y$ of $\Topzz(X,T)$ and embed it as an $(H_1 \times H_2)$-SFT using $\Grid(Y)$ as defined in~\Cref{subsecgrid}. Second, we construct a subshift $\Final(X,Y)$ over the group $G \times H_1 \times H_2$ which contains in every $(H_1 \times H_2)$-coset a copy of $\Grid(Y)$. We can thus think of a configuration in $\Final(X,T)$ as a sequence $(\omega_g,y_g)_{g \in G}$ of configurations in $\Grid(Y)$. We shall impose through local rules that in every such configuration all the $\omega_g$ are the same, that is, that every $(H_1 \times H_2)$-coset has the same grid structure. Furthermore, we shall also impose through local rules that if $(y_g)$ is coding a configuration $x \in X$, then for every generator $s \in S$ of $G$ we have that $y_{s^{-1}g}$ codes $T^s(x)$. This is also done through local rules. Finally, we shall define our factor map using the function $\gamma$ defined on~\Cref{lemma_allforone} and prove that it satisfies the required properties.

Let us now proceed formally. Consider first the subshift $\Topzz(X,T)$ from \Cref{lemma_allforone}. By~\Cref{remark_nn_and_1_block} we may extract a nearest neighbour $\ZZ^2$-SFT extension $Y$ and a $1$-block map $\widehat{\phi}\colon Y \twoheadrightarrow \Topzz(X,T)$ defined by a local function $\widehat{\Phi}$. We can construct the non-empty $(H_1 \times H_2)$-SFT $\Grid(Y)$ as defined in~\Cref{subsecgrid}. Let $\FF_{\Grid(Y)}$ be a finite set of forbidden patterns defining $\Grid(Y)$. 

Let $S$ be the finite set of generators of $G$ with which  $\Topz(X,T)$ was defined. We construct a $(G \times H_1 \times H_2)$-subshift $\Final(X,T)$ using the alphabet of $\Grid(Y)$ and the set of forbidden patterns $\FF_{\Final(X,T)} \isdef \FF_1 \cup \FF_2 \cup \FF_3$. Before defining these sets formally let us describe them in simpler words.

$\FF_1$ will be a set of forbidden patterns which forces each $(H_1 \times H_2)$-coset to contain configurations only from $\Grid(Y)$. $\FF_2$ will further constraint $\Final(X,T)$ in such a way that in every $(H_1 \times H_2)$-coset the first component (the $\omega \in \Grid$) is the same. Finally, $\FF_3$ will link the different $(H_1 \times H_2)$-cosets indexed by $G$ forcing them to respect the action of $T$. The effect of this last set of forbidden patterns is illustrated on~\Cref{figure_reglas} where the grids are induced by $\omega$ and appear somewhere in $\{1_G\} \times H_1 \times H_2$ and $\{s_1^{-1}\} \times H_1 \times H_2$. In each of the two grids a configuration from a nearest neighbour extension of $\Topzz(X,T)$ is encoded. The forbidden patterns from $\FF_3$ code the fact that if the configurations indexed by $1_G$ and $s_1^{-1}$ code respectively $x$ and $y$, we will forcefully have that $y = T^{s_1}(x)$.

\begin{figure}[h!]
	\centering
	\begin{tikzpicture}[scale=0.5]
	\node at (-7,0) {$1_G$};
	\node at (-7,5) {$s_1^{-1}$};
	\node at (9,5) {{\footnotesize $\begin{pmatrix}
			\Psi(T^{s_1}(x)) \\
			\Psi(T^{s_1s_1}(x)) \\
			\vdots \\
			\Psi(T^{s_ns_1}(x)) 
			\end{pmatrix}$}};
	\draw[color = black, dashed, fill = black!10] (13,6) rectangle (15,6.75);
	\draw[color = black, dashed, fill = black!10] (12.5,0.25) rectangle (15.5,1);
	\node at (11.7,5) {$=$};
	\node at (11.7,5.4) {{\tiny$\FF_3$}};
	\node at (14,0) {{\footnotesize $\begin{pmatrix}
			\Psi(x) \\
			\Psi(T^{s_1}(x)) \\
			\vdots \\
			\Psi(T^{s_n}(x)) 
			\end{pmatrix}$}};
	\node at (14,5) {{\footnotesize $\begin{pmatrix}
			\Psi(y) \\
			\Psi(T^{s_1}(y)) \\
			\vdots \\
			\Psi(T^{s_n}(y)) 
			\end{pmatrix}$}};
	\begin{scope}[shift={(3,0)},scale = 0.8]
	\def \c{0.3}
	\node (aa) at (0,0) {};
	\draw [<->] (-5,0) to (5+5/3,0);
	\draw [<->] (-6,-1) to (4+5/3,-1);
	\draw [<->] (-7,-2) to (3+5/3,-2);
	\draw [<->] (-4,1) to (6+5/3,1);
	\draw [<->] (-3,2) to (7+5/3,2);

	\draw [<->] (3,3) to (-3,-3);
	\draw [<->] (3-5/3,3) to (-3-5/3,-3);
	\draw [<->] (3-10/3,3) to (-3-10/3,-3);
	\draw [<->] (3+5/3,3) to (-3+5/3,-3);
	\draw [<->] (3+10/3,3) to (-3+10/3,-3);
	\draw [<->] (3+15/3,3) to (-3+15/3,-3);
	
	\foreach \i in {-10/3,-5/3,0,5/3,10/3,15/3}{
		\foreach \j in {-2,-1,0,1,2}{
			\draw[fill = black!20] (\i+\j,\j) circle (\c);
		}
	}
	\end{scope}
	
	\begin{scope}[shift={(0,5)},scale = 0.8]
	\def \c{0.3}
	\node (bb) at (0,0) {};
	\draw [<->] (-5,0) to (5+5/3,0);
	\draw [<->] (-6,-1) to (4+5/3,-1);
	\draw [<->] (-7,-2) to (3+5/3,-2);
	\draw [<->] (-4,1) to (6+5/3,1);
	\draw [<->] (-3,2) to (7+5/3,2);

	\draw [<->] (3,3) to (-3,-3);
	\draw [<->] (3-5/3,3) to (-3-5/3,-3);
	\draw [<->] (3-10/3,3) to (-3-10/3,-3);
	\draw [<->] (3+5/3,3) to (-3+5/3,-3);
	\draw [<->] (3+10/3,3) to (-3+10/3,-3);
	\draw [<->] (3+15/3,3) to (-3+15/3,-3);
	
	\foreach \i in {-10/3,-5/3,0,5/3,10/3,15/3}{
		\foreach \j in {-2,-1,0,1,2}{
			\draw[fill = black!20] (\i+\j,\j) circle (\c);
		}
	}
	\end{scope}
	\draw [->, bend left] (aa) to (bb);
	\node at (0,2) {{\footnotesize$s_1^{-1}$}};
	\end{tikzpicture}
	\caption{The set $\FF_3$ links the different $(H_1\times H_2)$-cosets by forcing that in all configurations as above then $\Psi(y) = \Psi(T^{s_1}(x))$.}
	\label{figure_reglas}
\end{figure}
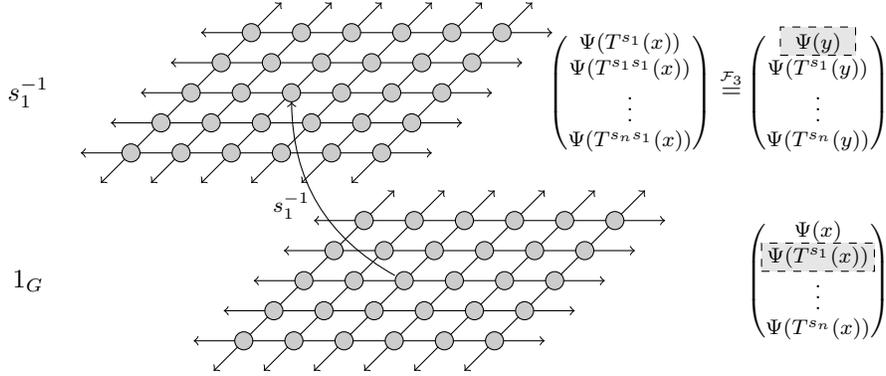

Let us now define the set of forbidden patterns $\FF_{\Final(X,T)}  \isdef \FF_1 \cup \FF_2 \cup \FF_3$.

\begin{enumerate}
	\item For each $q \in \FF_{\Grid(Y)}$ let $p$ be the pattern with $\supp(p) = \{1_G\} \times \supp(q)$ such that for every $(h_1,h_2) \in \supp(q)$ we have $p{(1_G,h_1,h_2)} = q{(h_1,h_2)}$. We define $\FF_1$ as the set of all such $p$.
	\item We define $\FF_2$ as the set of patterns $p = (p_1,p_2)$ which have support $\supp(p) =$ $\{(1_G,1_{H_1},1_{H_2}), (s,1_{H_1},1_{H_2})\}$ for some $s \in S$ and such that:
	$$p_1(1_G,1_{H_1},1_{H_2}) \neq p_1(s,1_{H_1},1_{H_2})$$
	\item We define $\FF_3$ as the set of patterns $p=(p_1,p_2)$ which have support $\supp(p) =$  $\{(1_G,1_{H_1},1_{H_2}), (s^{-1},1_{H_1},1_{H_2})\}$ for some $s \in S$ and such that: $$\widehat{\Phi}(p_2(1_G,1_{H_1},1_{H_2}))(s) \neq \widehat{\Phi}(p_2(s^{-1},1_{H_1},1_{H_2}))(1_G).$$
\end{enumerate}

As the union of the supports of the patterns in $\FF_{\Final(X,T)} $ is bounded, we have that $\Final(X,T)$ is an SFT. We shall denote elements of $\Final(X,T)$ as pairs $(\omega,y)$ and their restrictions to the set $\{1_G\}\times H_1 \times H_2$ by $(\omega^0,y^0) \isdef (\omega,y)|_{\{1_G\}\times H_1 \times H_2}$. From $\FF_1$ we obtain that $(\omega^0,y^0) \in \Grid(Y)$. Furthermore, from the definition of $\FF_2$ we deduce that $\omega$ must satisfy $\omega(g_1,h_1,h_2) = \omega(1_G,h_1,h_2)$ for every $g \in G$, therefore we may identify $\omega$ with $\omega^0 \in \Grid$ and thus unambiguously define an action $[\omega] : \ZZ^2 \curvearrowright H_1 \times H_2$.

To prove~\Cref{simulationTHEOREM} it suffices to show that its $G$-subaction is an extension of $(X,T)$. Indeed, consider the map $\varphi \colon \Final(X,T) \to X$ defined as follows: for $(\omega,y) \in \Final(X,T)$ let $(\omega^0,y^0) \isdef (\omega,y)|_{\{1_G\}\times H_1 \times H_2}$ be defined as above. Using the notation from~\Cref{lemma_allforone} and~\Cref{remark_galletasfrak} we define $$\varphi(\omega,y) \isdef \gamma\left(1_G,\widehat{\phi}\left(\mathfrak{C}((\omega^0,y^0),1_{H_1},1_{H_2})\right)\right).$$

By the definition of $\FF_1$, we have that $(\omega^0,y^0) \in \Grid(Y)$ and hence we obtain that $\mathfrak{C}((\omega^0,y^0),1_{H_1},1_{H_2}) \in Y$ as shown in~\Cref{propo_relacion}. In turn, $\widehat{\phi}(\mathfrak{C}((\omega^0,y^0),1_{H_1},1_{H_2})) \in \Topzz(X,T)$ and thus we obtain $\gamma(1_G, \widehat{\phi}(\mathfrak{C}((\omega^0,y^0),1_{H_1},1_{H_2}))) \in X$ and so $\varphi(\omega,y) \in X$. Moreover, as $\widehat{\phi}$ is a $1$-block map, in order to compute the first $n$ coordinates of $\varphi(\omega,y)$ it suffices to know the values of $\mathfrak{C}((\omega',y'),1_{H_1},1_{H_2})$ restricted to a ball of diameter $3^n$ of $\ZZ^2$. And in turn, it suffices to know $(\omega',y')$ restricted to the ball of diameter $3^n$ of $H_1 \times H_2$ with respect to the generators $S_1 \times S_2$. This means that $\varphi(\omega,y)$ is continuous. In order to conclude we need to show that $\varphi$ is onto and that it is $G$-equivariant.

\begin{claim}
	$\varphi$ is $G$-equivariant.
\end{claim}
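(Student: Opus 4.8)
The plan is to check the intertwining relation $\varphi\circ\sigma^{(g,1_{H_1},1_{H_2})}=T^g\circ\varphi$ only for the generators $g=s\in S$ and then promote it to all of $G$. This reduction is harmless: since the $G$-subaction $\sigma^{(\cdot,1_{H_1},1_{H_2})}$ and $T$ are genuine $G$-actions, one has $\sigma^{g_1g_2}=\sigma^{g_1}\sigma^{g_2}$ and $T^{g_1g_2}=T^{g_1}T^{g_2}$, so if the relation holds on each generator it holds on every product of generators by induction on word length; as $S$ is symmetric and contains $1_G$, this covers all of $G$. Fix then $s\in S$ and $(\omega,y)\in\Final(X,T)$, and put $(\omega',y')\isdef\sigma^{(s,1_{H_1},1_{H_2})}(\omega,y)$.

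First I would unwind the shift. By definition of $\sigma$ we have $(\omega',y')(1_G,h_1,h_2)=(\omega,y)(s^{-1},h_1,h_2)$ for all $(h_1,h_2)$, so ${y'}^0(h_1,h_2)=y(s^{-1},h_1,h_2)$; and since $\FF_2$ forces $\omega(g,h_1,h_2)=\omega(1_G,h_1,h_2)$ for every $g\in G$, the grid is unchanged, ${\omega'}^0=\omega^0$, so both configurations induce the same action $[\omega]\colon\ZZ^2\curvearrowright H_1\times H_2$. Writing $c\isdef\mathfrak{C}((\omega^0,y^0),1_{H_1},1_{H_2})$ and $c'\isdef\mathfrak{C}(({\omega'}^0,{y'}^0),1_{H_1},1_{H_2})$ --- both in $Y$, since $\FF_1$ forces every $(H_1\times H_2)$-coset into $\Grid(Y)$ and by \Cref{remark_galletasfrak} --- reading $y$ along the common grid gives, for every $z\in\ZZ^2$,
\[
c(z)=y\bigl(1_G,[\omega](z,(1_{H_1},1_{H_2}))\bigr),\qquad c'(z)=y\bigl(s^{-1},[\omega](z,(1_{H_1},1_{H_2}))\bigr).
\]

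The heart of the matter is to feed in $\FF_3$. Avoiding all translates of the patterns of $\FF_3$ is exactly the pointwise identity $\widehat{\Phi}(y(1_G,h_1,h_2))(s)=\widehat{\Phi}(y(s^{-1},h_1,h_2))(1_G)$, valid for every $(h_1,h_2)$. Evaluating this at each grid point $(h_1,h_2)=[\omega](z,(1_{H_1},1_{H_2}))$ and recalling that $\widehat{\phi}$ is the $1$-block map induced by $\widehat{\Phi}$, I would obtain $\widehat{\phi}(c)(z)(s)=\widehat{\phi}(c')(z)(1_G)$ for all $z\in\ZZ^2$. Restricting to the row $z=(j,0)$, this says that the $s$-component of $\widehat{\phi}(c)$ and the $1_G$-component of $\widehat{\phi}(c')$ agree as bi-infinite sequences over $\ZZ\times\{0\}$. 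Now $\gamma(s,\cdot)$ decodes the Toeplitz structure of the $s$-component read along $\ZZ\times\{0\}$, and $\gamma(1_G,\cdot)$ applies the very same decoding to the $1_G$-component; hence equality of these two sequences yields $\gamma(s,\widehat{\phi}(c))=\gamma(1_G,\widehat{\phi}(c'))$.

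Finally I would invoke property $(2)$ of \Cref{lemma_allforone}, namely $\gamma(s,w)=T^s(\gamma(1_G,w))$, to conclude
\[
\varphi(\omega',y')=\gamma\bigl(1_G,\widehat{\phi}(c')\bigr)=\gamma\bigl(s,\widehat{\phi}(c)\bigr)=T^s\bigl(\gamma(1_G,\widehat{\phi}(c))\bigr)=T^s\bigl(\varphi(\omega,y)\bigr),
\]
which is the required equivariance on the generator $s$. The main obstacle is not any computation but the bookkeeping that converts a shift in the $G$-direction into a shift of which $S$-component of the $\Topzz$-configuration is decoded: one must see simultaneously that $\FF_2$ keeps the grid of $\mathfrak{C}$ rigid across cosets, that $\FF_3$ matches the $s$-component of one coset with the $1_G$-component of its $s^{-1}$-neighbour, and that $\gamma$ decodes every component by the identical procedure, so that these matchings survive passage through $\widehat{\phi}$ and $\gamma$ and reproduce exactly $T^s$.
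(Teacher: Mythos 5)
Your proposal is correct and follows essentially the same route as the paper's proof: reduce to generators $s\in S$, use $\FF_2$ to identify the grids of the $1_G$- and $s^{-1}$-cosets, use $\FF_3$ to match the $s$-component of $\widehat{\phi}(\mathfrak{C}((\omega^0,y^0),1_{H_1},1_{H_2}))$ with the $1_G$-component of the shifted configuration, and conclude via property (2) of \Cref{lemma_allforone}. The only differences are presentational: you spell out the induction on word length and the fact that $\gamma(s,\cdot)$ depends only on the $s$-component, both of which the paper leaves implicit.
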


\begin{proof}
	We need to show that for every $(\omega,y) \in \Final(X,T)$ and $g \in G$ we have $\varphi(\sigma^g(\omega,y)) = T^g(\varphi(\omega,y))$. Clearly, it suffices to show the property for each $s \in S$. Let $(\omega,y) \in \Final(X,T)$ and denote $(\omega^0,y^0) \isdef (\omega,y)|_{\{1_G\}\times H_1 \times H_2}$ and $(\omega^1,y^1) \isdef \sigma^{s}(\omega,y)|_{\{1_{G}\}\times H_1 \times H_2}$. As no patterns from $\FF_2$ appear in $(\omega,y)$, we have that $\omega^0 = \omega^1$ and thus their induced actions are the same, that is $[\omega^0] = [\omega^1]$ and thus we may denote both actions just by $[\omega]$. Using this we get that on the one hand, for each $(z_1,z_2) \in \ZZ^2$ \begin{align*}
	\widehat{\phi}(\mathfrak{C}((\omega^1,y^1),1_{H_1},1_{H_2}))(z_1,z_2) &  = \widehat{\Phi}(\mathfrak{C}((\omega^1,y^1),1_{H_1},1_{H_2})(z_1,z_2))\\
	& = \widehat{\Phi}(y^1([\omega]((z_1,z_2),(1_{H_1},1_{H_2}))))\\
	& = \widehat{\Phi}(y(s^{-1},[\omega]((z_1,z_2),(1_{H_1},1_{H_2})))).
	\end{align*}
	And on the other hand,
	\begin{align*}
	\widehat{\phi}(\mathfrak{C}((\omega^0,y^0),1_{H_1},1_{H_2}))(z_1,z_2) &  = \widehat{\Phi}(\mathfrak{C}((\omega^0,y^0),1_{H_1},1_{H_2})(z_1,z_2))\\
	& = \widehat{\Phi}(y^0([\omega]((z_1,z_2),(1_{H_1},1_{H_2}))))\\
	& = \widehat{\Phi}(y(1_G,[\omega]((z_1,z_2),(1_{H_1},1_{H_2})))). \\
	\end{align*}
	Putting the previous equations together with the fact that no patterns from $\FF_3$ appear in $(\omega,y)$, we obtain $$\widehat{\phi}(\mathfrak{C}((\omega^1,y^1),1_{H_1},1_{H_2}))(1_G) = \widehat{\phi}(\mathfrak{C}((\omega^0,y^0),1_{H_1},1_{H_2}))(s).$$ Finally, a simple computation yields: \begin{align*}
	\varphi(\sigma^s(\omega,y)) & = \gamma(1_G, \widehat{\phi}(\mathfrak{C}((\omega^1,y^1),1_{H_1},1_{H_2})))\\
	& = \gamma(s, \widehat{\phi}(\mathfrak{C}((\omega^0,y^0),1_{H_1},1_{H_2})))\\
	& = T^s(\gamma(1_G,\widehat{\phi}(\mathfrak{C}((\omega^0,y^0),1_{H_1},1_{H_2})))) \\
	& = T^s(\varphi(\omega,y)).
	\end{align*}
	Where the penultimate equality is from \Cref{lemma_allforone}.
\end{proof}

\begin{claim}\label{varphi_onto}
	The map $\varphi$ is onto.
\end{claim}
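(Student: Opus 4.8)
The plan is to reverse-engineer a preimage of an arbitrary $x \in X$ by reading the local rules backwards. The key observation is that the patterns in $\FF_3$ tie the $(H_1 \times H_2)$-coset indexed by $g$ to the one indexed by $gs^{-1}$ so that if the former codes some $x' \in X$ then the latter must code $T^s(x')$. Iterating this, a globally consistent solution is obtained by declaring that the coset indexed by $g$ codes $x_g \isdef T^{g^{-1}}(x)$; this assignment is consistent precisely because $x_{gs^{-1}} = T^{sg^{-1}}(x) = T^s(x_g)$ for every $g \in G$ and $s \in S$. In particular the coset of $1_G$ codes $x$ itself, which is exactly what $\varphi$ recovers.

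Concretely, I would first fix a single $\omega \in \Grid$ whose induced action $[\omega]$ is free, which exists by \Cref{proposition_ofseward}. For each $g \in G$ set $x_g \isdef T^{g^{-1}}(x)$ and let $t_g \in \Topzz(X,T)$ be the configuration all of whose rows equal $\widetilde{\Psi}(x_g)$; this lies in $\Topzz(X,T)$ because $\widetilde{\Psi}(x_g) \in \Topz(X,T)$ and $\pi_{(\ZZ,0)}(\Topzz(X,T)) = \Topz(X,T)$. Since $\widehat{\phi}\colon Y \twoheadrightarrow \Topzz(X,T)$ is onto, choose any $c_g \in Y$ with $\widehat{\phi}(c_g) = t_g$. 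Applying \Cref{propo_relacion} to $c_g$ with this \emph{fixed} $\omega$ (and a common choice of orbit representatives for $\sim$) yields $y_g$ with $(\omega, y_g) \in \Grid(Y)$ and $\mathfrak{C}((\omega,y_g),1_{H_1},1_{H_2}) = c_g$. I would then assemble the configuration $(\omega,y) \in (\mathcal{G}_1 \times \mathcal{G}_2 \times \ag_Y)^{G \times H_1 \times H_2}$ whose restriction to each coset $\{g\} \times H_1 \times H_2$ is $(\omega, y_g)$.

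It remains to verify that $(\omega,y) \in \Final(X,T)$ and that $\varphi(\omega,y) = x$. Avoidance of $\FF_1$ is immediate since every coset lies in $\Grid(Y)$, and $\FF_2$ is avoided because $\omega$ is shared across all cosets. The main obstacle is $\FF_3$, whose constraint must hold at \emph{every} base point $(k_1,k_2)$, not only at the identity. Here I would use that, since the same $\omega$ and the same representatives are used in every coset, any $(k_1,k_2)$ can be written as $[\omega]((z_1,z_2),(h_1^i,h_2^i))$ with the same offset in all cosets, so that $\mathfrak{C}((\omega,y_g),k_1,k_2) = \sigma^{-(z_1,z_2)}(c_g)$; applying $\widehat{\phi}$, which commutes with the shift, gives $\widehat{\Phi}(y_g(k_1,k_2)) = t_g(z_1,0) = \widetilde{\Psi}(x_g)(z_1)$. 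Reading off components then yields $\widehat{\Phi}(y_g(k_1,k_2))(s) = \Psi(T^s(x_g))(z_1) = \Psi(x_{gs^{-1}})(z_1) = \widehat{\Phi}(y_{gs^{-1}}(k_1,k_2))(1_G)$, which is exactly the equality that forbids $\FF_3$ from occurring. Finally, evaluating $\varphi$ on the coset of $1_G$ gives $\varphi(\omega,y) = \gamma(1_G,\widehat{\phi}(c_{1_G})) = \gamma(1_G,t_{1_G}) = \gamma(1_G,\widetilde{\Psi}(x)) = T^{1_G}(x) = x$ by \Cref{lemma_allforone}, so $\varphi$ is onto.
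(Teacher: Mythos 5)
Your proposal is correct and follows essentially the same route as the paper's own proof: fix one free $\omega$, place on the coset of $g$ a $\widehat{\phi}$-preimage of the vertical extension of $\widetilde{\Psi}(T^{g^{-1}}(x))$ via \Cref{propo_relacion} with a common set of orbit representatives, check that $\FF_1,\FF_2,\FF_3$ are avoided (using the identity $\Psi(T^s(T^{g^{-1}}(x))) = \Psi(T^{sg^{-1}}(x))$ for $\FF_3$ at every base point), and evaluate $\varphi$ on the identity coset to recover $x$. The only differences are notational (your $y_g$, $c_g$, $x_g$ are the paper's $y^{g^{-1}}$, $c^{g^{-1}}$, $T^{g^{-1}}(x)$).
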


\begin{proof}
	Let $x \in X$. For each $g \in G$ let $c^g \in Y$ be a preimage under $\widehat{\phi}$ of the vertical extension of $\widetilde{\Psi}(T^g(x)) \in \Topz(X,T)$ as defined in~\Cref{subsectop}. Also, choose $\bar{\omega} \in \Grid$ such that $[\bar{\omega}]$ is free. By~\Cref{propo_relacion}, we obtain that for each $g \in G$ there exists a pair $(\bar{\omega},y^g) \in \Grid(Y)$ such that $\mathfrak{C}((\bar{\omega},y^g),1_{H_1},1_{H_2}) = c^g$. Moreover, by fixing a set of representatives $(h^i_1,h^i_2)_{i \in I}$ of $(H_1 \times H_2)/\sim$ as in \Cref{propo_relacion}, we can impose that for each $(h_1,h_2) \in H_1 \times H_2$ we have $\mathfrak{C}((\bar{\omega},y^g),h_1,h_2) = \sigma^{-(z_1,z_2)}(c^g)$ for the unique $(z_1,z_2) \in \ZZ^2$ such that there is an $i \in I$ satisfying $[\bar{\omega}]((z_1,z_2),(h^i_1,h^i_2)) = (h_1,h_2)$. We define the $G \times H_1 \times H_2$ configuration $(\omega,y)$ as follows:
	$$(\omega,y)(g,h_1,h_2) \isdef (\bar{\omega}(h_1,h_2), y^{g^{-1}}(h_1,h_2)).$$
	By definition, we have \begin{align*}
	\varphi(\omega,y) & = \gamma(1_G,\widehat{\phi}(\mathfrak{C}((\bar{\omega},y^{1_G}),1_{H_1},1_{H_2})))\\
	& = \gamma(1_G,\widehat{\phi}(c^{1_G})) \\
	& = \gamma(1_G,\widetilde{\Psi}(x)) \\
	& = x.
	\end{align*}
	Therefore, it suffices to show that $(\omega,y) \in \Final(X,T)$. As every $(H_1 \times H_2)$-coset contains a configuration from $\Grid(Y)$, no patterns from $\FF_1$ appear. Also, as the first component is always $\bar{\omega}$, we have that no patterns from $\FF_2$ appear. Finally, we have that for every $g \in G$ and $s \in S$ then $y{(g^{-1},h_1,h_2)} = (y^{g}){(h_1,h_2)} = (c^g)(z_1,z_2)$ and $y{(g^{-1}s^{-1},h_1,h_2)} = (y^{sg}){(h_1,h_2)} = (c^{sg})(z_1,z_2)$.
	
	Therefore we have that, for each $s \in S$, $\widehat{\Phi}((c^g)(z_1,z_2))(s) = (\widetilde{\Psi}(T^g(x))(z_1))(s)$ and $\widehat{\Phi}((c^{sg})(z_1,z_2))(s) = (\widetilde{\Psi}(T^{sg}(x))(s))(z_1)$. In particular as $$\widetilde{\Psi}(T^g(x))(s) = {\Psi}(T^s(T^g(x))) = {\Psi}(T^{sg}(x)) = \widetilde{\Psi}(T^{sg}(x))(1_G)$$
	we get that $\widehat{\Phi}((c^g)(z_1,z_2))(s) = \widehat{\Phi}((c^{sg})(z_1,z_2))(1_G)$ and thus, $$\widehat{\Phi}(y{(g^{-1},h_1,h_2)})(s) = \widehat{\Phi}(y{(g^{-1}s^{-1},h_1,h_2)})(1_G).$$ This implies that no patterns from $\FF_3$ appear. Therefore $(\omega,y) \in \Final(X,T)$.
\end{proof}

Collecting both claims and the previously proven properties of $\varphi$, we conclude that $\varphi \colon (\Final(X,T),\sigma_G) \twoheadrightarrow (X,T)$ is a topological factor. This proves~\Cref{simulationTHEOREM}.

\section{Consequences and remarks}\label{section.consequences}

In this last section we explore some consequences of \Cref{simulationTHEOREM}. In the case of expansive actions, we can give more detailed information about the factor. Specifically, we show that if $G$ is a recursively presented group, then every effectively closed $G$-subshift can be realized as the projective subdynamics of a sofic $(G \times H_1 \times H_2)$-subshift. Moreover, we prove that the sofic subshift can be picked in such a way that it is invariant under the shift action of $H_1 \times H_2$. 

This result is particularly helpful for the next part where we show that any group that can be written as the direct product of three infinite and finitely generated groups with decidable word problem admits a non-empty strongly aperiodic SFT.

Finally, we close this section by showing how the previous result can be used to prove the existence of non-empty strongly aperiodic subshifts in a class of branch groups which includes the Grigorchuk group.

\subsection{The case of effectively closed expansive actions}

The subshift $\Final(X,T)$ constructed in the proof of \Cref{simulationTHEOREM} satisfies the required properties, however, it has an undesirable perk. Namely, it might happen that for $(\omega,y) \in \Final(X,T)$ we have $\varphi(\omega,y) \neq \varphi(\sigma^{(1_G,h_1,h_2)}(\omega,y))$ for some $(h_1,h_2) \in H_1 \times H_2$. The reason is that in $\omega|_{\{1_G\}\times H_1 \times H_2} \in \Grid$ there might be many different coded grids and a priori there is no restriction forcing them to contain shifts of the same configuration.

The natural approach to get rid of this perk is to use the functions $\gamma_n$ defined after~\Cref{propo_effective_subshift} to impose in every $(H_1 \times H_2)$-coset that the first $n$-coordinates of the coded configuration are the same everywhere. While this works naturally for an expansive action, it fails in the case where $(X,T)$ is equicontinuous, see Proposition 6.1 of~\cite{Hochman2009b} for a simple example. This makes expansive systems particularly interesting in this construction, especially in the proof of \Cref{corollaryaperioci} where we show that every triple direct product of finitely generated groups with decidable word problem admit strongly aperiodic SFTs. 

So far, we have only used the notion of effectively closed subshift in the context of $\ZZ$-subshifts. In the case of a general finitely generated group, we need a way to code forbidden patterns into a language. This is achieved by the notion of pattern coding, for a longer survey see~\cite{ABS2017}.

Given a group $G$ generated by a finite set $S$ and a finite alphabet $\ag$ a \define{pattern coding} $c$ is a finite set of tuples $c=(w_i,a_i)_{i \in I}$ where $w_i \in S^{*}$ and $a_i \in \ag$. A set of pattern codings $\CC$ is said to be recursively enumerable if there is a Turing machine which takes as input a pattern coding $c$ and accepts it if and only if $c \in \CC$. We remark that every pattern can be coded by identifying each element in its support to some word in $S^*$ representing it.

\begin{definition}
	A subshift $X \subset \ag^G$ is \define{effectively closed} if there is a recursively enumerable set of pattern codings $\CC$ such that:$$ X = X_{\CC} \isdef \bigcap_{g \in G, c \in \CC} \left( \ag^G \setminus \bigcap_{(w,a) \in c}[a]_{gw} \right).$$
\end{definition}

\begin{theorem}\label{theorem_expansive_case}
	Let $G$ be a recursively presented and finitely generated group and $Y$ an effectively closed $G$-subshift. For every pair of infinite and finitely generated groups $H_1,H_2$ there exists a sofic $(G \times H_1 \times H_2)$-subshift $X$ such that:
	\begin{itemize}
		\item The $G$-subaction of $X$ is conjugate to $Y$.
		\item The $G$-projective subdynamics of $X$ is $Y$.
		\item The shift action $\sigma$ restricted to $H_1 \times H_2$ is trivial on $X$.
	\end{itemize}
\end{theorem}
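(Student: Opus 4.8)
The plan is to deduce the statement from \Cref{simulationTHEOREM} applied to a carefully chosen effectively closed $G$-dynamical system that is conjugate to $(Y,\sigma)$, and then to refine the construction of $\Final$ so that the simulated copies of $Y$ are synchronized across every $(H_1\times H_2)$-coset. First I would turn the subshift $Y\subseteq \ag_Y^{\,G}$ into an effectively closed action on a closed subset of $\{0,1\}^{\NN}$. Fix a computable enumeration $(w_n)_{n\in\NN}$ of $S^{*}$ with $w_0=\epsilon$, fix an injection of $\ag_Y$ into $\{0,1\}^{k}$, and let $\iota\colon \ag_Y^{\,G}\to\{0,1\}^{\NN}$ send $y$ to the concatenation of the binary codes of $\big(y(\overline{w_n})\big)_{n\in\NN}$, where $\overline{w}$ denotes the element of $G$ represented by $w$. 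Put $\widetilde{Y}\isdef \iota(Y)$ and let $\widetilde{T}$ be the action making $\iota$ equivariant, so that $\widetilde{T}^{s}$ is the coordinate reindexing induced by $w_n\mapsto s^{-1}w_n$; then $\iota$ is a topological conjugacy from $(Y,\sigma)$ onto $(\widetilde{Y},\widetilde{T})$. The pair $(\widetilde{Y},\widetilde{T})$ is effectively closed: a point fails to lie in $\widetilde{Y}$ exactly when some block is not a legal code, or when $\overline{w_m}=\overline{w_n}$ while blocks $m$ and $n$ disagree, or when the decoded $G$-configuration meets a forbidden pattern of $Y$; the first condition is clopen and the other two are recursively enumerable because $G$ is recursively presented (so $\{(m,n):\overline{w_m}=\overline{w_n}\}$ is r.e.) and $Y$ is effectively closed, while $\widetilde{T}^{s}$ is a computable reindexing, so the action is effectively closed in the sense of the definition of effectively closed action.

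Next I would run the entire construction of \Cref{subsecproof} on $(\widetilde{Y},\widetilde{T})$, producing the SFT $\Final(\widetilde{Y},\widetilde{T})$ together with the grid machinery $\mathfrak{C}$, the one-block map $\widehat{\phi}$ and the decoding maps $\gamma,\gamma_n$ of \Cref{lemma_allforone} and the paragraph after \Cref{propo_effective_subshift}. Rather than the map $\varphi$ of \Cref{subsecproof}, I would define a factor onto $\ag_Y^{\,G\times H_1\times H_2}$ by reading off, at every $(g,h_1,h_2)$, the $0$-th letter of the decoded point of $\widetilde{Y}$ obtained from the grid of the coset $g$ based at $(h_1,h_2)$. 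The essential point is that this letter depends on only finitely many of the $\gamma_n$ (the first $k$ binary coordinates), hence on the restriction of the configuration to a bounded ball of $H_1\times H_2$; so the resulting map $\pi$ is a sliding block code and is $(G\times H_1\times H_2)$-equivariant. Exactly as in the equivariance claim of \Cref{subsecproof}, the coset-linking forbidden set $\FF_3$ together with $\widetilde{Y}=\iota(Y)$ forces the $G$-indexed family of decoded points $d_{h}$ to satisfy $d_{h}=\widetilde{T}^{\,h^{-1}}d_{1_G}$, and since $d_{1_G}=\iota(y^{*})$ for some $y^{*}\in Y$ the letter read at $(g,\cdot,\cdot)$ is precisely $y^{*}(g)$.

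The crux is to make $\pi$ constant along each $(H_1\times H_2)$-coset, i.e. to make the restricted shift action of $H_1\times H_2$ trivial on the image. For this I would add to $\Final(\widetilde{Y},\widetilde{T})$ a further finite set of forbidden patterns that compares the decoded $0$-th letters of any two adjacent elements of $H_1\times H_2$ and forbids them to differ; this is of finite type precisely because that letter is read from a bounded window. Since within a single grid the decoding is shift-invariant (property 1 of \Cref{lemma_allforone}), adjacent cells lying in the same simulated grid automatically agree, while the new rule propagates agreement across grids; as the Cayley graph of $H_1\times H_2$ with generators $S_1\times S_2$ is connected, the decoded letter becomes constant on every coset. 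Non-emptiness of the enlarged SFT is inherited from \Cref{propo_relacion}: the configurations built there fill each coset with shifts of one common point, so the new constraints hold. Taking $X$ to be the image of the enlarged SFT under $\pi$ then yields a sofic subshift on which $H_1\times H_2$ acts trivially, whose points restrict in the $G$-direction to exactly $Y$ (both the surjectivity and the inclusion into $Y$ were obtained above), so that $\pi_{G}(X)=Y$ and the $G$-subaction $x\mapsto x|_{G\times\{(1_{H_1},1_{H_2})\}}$ is a conjugacy onto $Y$.

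I expect the main obstacle to be exactly this synchronization step, and it is where expansiveness is indispensable: for a genuine subshift $Y$ the ``current letter'' of the simulated point is recoverable from a bounded window, so the agreement rule is of finite type and spreads by connectivity, whereas for an equicontinuous system a point of $\{0,1\}^{\NN}$ can only be pinned down using unboundedly many coordinates and no finite-type rule can force two grids to carry the same point — which is precisely the failure alluded to after the statement. The remaining verifications (effectiveness of $(\widetilde{Y},\widetilde{T})$, continuity and equivariance of $\pi$, and that the new patterns do not destroy surjectivity) are routine adaptations of the arguments already given in \Cref{subsecproof}.
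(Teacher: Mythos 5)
Your proposal is correct and follows essentially the same route as the paper's proof: you conjugate $Y$ to an effectively closed system on a subset of $\{0,1\}^{\NN}$ by enumerating words over $S$ and block-coding letters (the paper's map $\rho$ and system $(Z,T)$), apply \Cref{simulationTHEOREM}, add a finite set of forbidden patterns forcing the finitely many decoded coordinates of the $0$-th letter to agree at $S_1\times S_2$-adjacent points (the paper's $\FF_4$, built from the $\gamma_m$ for $m<\kappa$), and then read off that letter to obtain the sofic factor $X$. The only cosmetic difference is that the paper invokes Lemma~2.3 of~\cite{ABS2017} (a maximal set of forbidden pattern codings) to justify the effectiveness of detecting forbidden patterns of $Y$ in the decoded configuration, a detail you fold into your ``routine verifications.''
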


\begin{proof}
	Let $S \subset G$ be a finite generating set and consider a recursive bijection $\xi \colon \NN \to S^*$ where $S^*$ is the set of all words over $S$. As $G$ is recursively presented, its word problem $\texttt{WP}(G) = \{w \in S^* \mid w =_G 1_G \}$ is recursively enumerable and there is a Turing machine $\mathcal{M}$ which accepts a pair $(n,n') \in \NN^2$ if and only if $\xi(n) = \xi(n')$ as elements of $G$. For simplicity, fix $\xi(0)$ to be the empty word representing $1_G$.
	
	Let $Y \subset \ag^G$ be the effectively closed $G$-subshift of the statement, we shall encode elements of $\ag$ as binary strings of a fixed length $\kappa \isdef \lceil \log_2(|\ag|)\rceil$. Since $2^{\kappa} > |\ag|$ we may arbitrarily choose a $1$-to-$1$ map $\upsilon\colon \ag \to  \{0,1\}^{\kappa}$ for this encoding. Define the function $\rho \colon Y \to \{0,1\}^{\NN}$ by concatenating all the codings of $y(\xi(i))$ for every $i \in \NN$, that is
	
	$$\rho(y) = \upsilon(y(\xi(0)))\upsilon(y(\xi(1)))\upsilon(y(\xi(2)))\dots$$
	
	Formally, this may be written as
	
	$$\rho(y)_{n} = (\upsilon(y(\xi( \lfloor n/\kappa \rfloor ))))_{n\bmod{\kappa}}.$$
	
	Here $\xi( \lfloor n/\kappa \rfloor) \in S^*$ is identified as an element of $G$. Consider the set $Z = \rho(Y) \subset \{0,1\}^{\NN}$ and the left $G$-action $T \colon G \curvearrowright Z$ defined by $T^g(\rho(y)) \isdef \rho(\sigma^g(y))$. Clearly $\rho$ is a topological conjugacy between $(Y,\sigma)$ and $(Z,T)$. We claim that $(Z,T)$ is an effectively closed $G$-dynamical system.
	
	Indeed, let $w \in \{0,1\}^*$. A Turing machine which accepts $w$ if and only if $[w] \cap Z = \emptyset$ is given by the following scheme: first, note that $Z$ is built from blocks of the form $\upsilon(a)$ for some $a \in \ag$. We detect all $w$ which do not follow that pattern accepting $w$ if for some $n < |w|/\kappa$ we have that $w_{\kappa n},\dots,w_{\kappa n+\kappa-1}$ does not belong to $\upsilon(\ag)$. Second, we must have that if $\xi(n) = \xi(n')$ then the words of length $n$ appearing at both $\kappa n$ and $\kappa n'$ must be identical. We detect the words for which this does not hold by checking for each pair $(\kappa n,\kappa n')$ in the support of $w$ and running $\mathcal{M}$ in parallel over the pair $(n,n')$. If $\mathcal{M}$ accepts for a pair such that $w_{\kappa n},\dots,w_{\kappa n+\kappa-1} \neq w_{\kappa n'},\dots,w_{\kappa n'+\kappa-1}$ then accept $w$. Also, in parallel, use the algorithm recognizing a maximal set of forbidden patterns for $Y$ (this exists by~\cite{ABS2017}, Lemma~2.3) over the pattern coding $$c_w = (\xi(n),\upsilon^{-1}(w_{\kappa n},\dots,w_{\kappa n+\kappa-1}))_{n < |w|/\kappa}.$$ This eliminates all $w$ which codify configurations containing forbidden patterns in $Y$. For the analogous algorithm for $T^s([w])$ just note that as $G$ is recursively presented, the set of pairs $(n,m)$ such that $\xi(n) =_G s\xi(m)$ also form a recursively enumerable set. Therefore $T^s([w])$ also admits the required algorithm.
	
	We use \Cref{simulationTHEOREM} to construct the $(G \times H_1 \times H_2)$-SFT $\Final(Z,T)$. In this case, we shall further restrict $\Final(Z,T)$ with an extra set of forbidden patterns $\FF_4$. Let $B_n$ be the ball of size $n$ in $H_1 \times H_2$ with respect to the metric induced by the set of generators $S_1 \times S_2$ used to construct $\Grid$ in~\Cref{subsecgrid}. Let $p$ be a pattern with support $\{1_G\} \times B_{3^m+1}$, let $(\omega,y) \in [p]$ and $(\omega^0,y^0)= (\omega,y)|_{\{1_G\}\times H_1 \times H_2}$. By definition of $\gamma_m$, we have that for each $(s_1,s_2) \in S_1 \times S_2$, $\gamma_m(1_G, \widehat{\phi}( \mathfrak{C}((\omega^0,y^0),s_1,s_2)))$ only depends on the ball of size $3^m$ around $(s_1,s_2)$. Thus, $\gamma_m$ depends only on $p$. We shall therefore write $\gamma_m(1_G,p,s_1,s_2)$.
	
	For each $m \in \{0,\dots,\kappa-1\}$ we put in $\FF_4$ all the patterns $p$ with support $\supp(p) = \{1_G\} \times B_{3^m+1}$ such that there exists $(s_1,s_2) \in S_1 \times S_2$ satisfying $$\gamma_m(1_G,p,s_1,s_2) \neq \gamma_m(1_G,p,1_{H_1},1_{H_2}).$$
	
	In other words, we force the first $\kappa$ symbols coded in every simulated grid to coincide. As the size of the support of these patterns is bounded, $\FF_4$ is finite and $\widehat{\Final}(Z,T)$ defined by forbidding additionally the patterns in $\FF_4$ is still an SFT. Moreover, the configuration constructed in~\Cref{varphi_onto} clearly satisfies these constraints so the map $\varphi$ is still onto.
	
	Finally, define a map $\hat{\varphi} \colon \widehat{\Final}(Z,T) \to \ag^{G \times H_1 \times H_2}$ by 
	
	$$(\hat{\varphi}(\omega,y)){(g,h_1,h_2)} \isdef \upsilon^{-1}\left( \varphi( \sigma^{(g,h_1,h_2)^{-1}}(\omega,y))|_{0,\dots,\kappa-1} \right).$$
	
	Let $X \isdef \hat{\varphi}(\widehat{\Final}(Z,T))$. The function $\varphi( \sigma^{(g,h_1,h_2)^{-1}}(\omega,y))|_{0,\dots,\kappa-1}$ depends only on a finite support (a ball of size $3^{\kappa}$ around the identity for instance) and clearly commutes with the shift. Therefore $\hat{\varphi}$ is indeed a topological factor and thus $X$ is a sofic subshift. Also, by definition of $\FF_4$ and the fact that $H_1 \times H_2$ is generated by $S_1 \times S_2$ we obtain that $\hat{\varphi}(\omega,y)$ does not depend on $(h_1,h_2)$ and thus $H_1 \times H_2$ acts trivially on $X$. 
	
	Finally, the projective subdynamics $\pi_G(X)$ clearly satisfy that $\pi_G(X) \subset Y$. Let $y \in Y$ and consider $(\omega,r)$ as in \Cref{varphi_onto} such that $\varphi(\sigma^{g}(\omega,r)) = T^g(\rho(y))$. By construction $(\omega,r) \in \widehat{\Final}(Z,T)$ and thus we can furthermore say that $\varphi(\sigma^{(g,h_1,h_2)}(\omega,r))|_{0,\dots,\kappa-1} = T^g(\rho(y))|_{0,\dots,\kappa-1}$. We deduce that
	\begin{align*}
	\hat{\varphi}(\omega,r){(g,h_1,h_2)} & = \upsilon^{-1}\left( \varphi( \sigma^{(g,h_1,h_2)^{-1}}(\omega,r))|_{0,\dots,\kappa-1} \right) \\
	& = \upsilon^{-1}\left(  T^{g^{-1}}(\rho(y))|_{0,\dots,\kappa-1} \right) \\
	& = \upsilon^{-1}\left( (\rho(\sigma^{g^{-1}}(y)))|_{0,\dots,\kappa-1} \right) \\
	& = \upsilon^{-1}\left( \upsilon( y(g)) \right) \\
	& = y(g)
	\end{align*}
	
	Therefore $\pi_G(X) = Y$. As  $H_1 \times H_2$ acts trivially on $X$ every configuration is a periodic extension of some $y \in Y$. Hence the subaction $(X,\sigma_G)$ is conjugate to $(Y,\sigma)$.\end{proof}

\subsection{Strongly aperiodic SFTs in triple direct products}

Next we show how \Cref{theorem_expansive_case} can be applied to produce strongly aperiodic subshifts of finite type. Recall that a $G$-subshift $(X,\sigma)$ is \emph{strongly aperiodic} if the shift action is free, that is, $\forall x \in X, \sigma^g(x) = x \implies g = 1_G$.

\begin{lemma}\label{lemmaperiodic}
	Let $G_i$ for $i \in \{1,2,3\}$ be infinite and finitely generated groups such that there exists a non-empty effectively closed subshift $Y_i \subset \ag^{G_i}$ which is strongly aperiodic. Then $G_1 \times G_2 \times G_3$ admits a non-empty strongly aperiodic SFT.
\end{lemma}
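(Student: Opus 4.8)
The plan is to apply \Cref{theorem_expansive_case} once for each of the three factors and then take a product, exploiting the fact that the theorem lets us make the two auxiliary factors act trivially. Write $G \isdef G_1 \times G_2 \times G_3$. For $i=1$ I would invoke \Cref{theorem_expansive_case} with $G_1$ as the simulated group, $Y_1$ as the effectively closed subshift, and $(H_1,H_2)=(G_2,G_3)$, obtaining a sofic $(G_1\times G_2\times G_3)$-subshift $X_1$ whose $G_1$-subaction is conjugate to $Y_1$ and on which $G_2\times G_3$ acts trivially via $\sigma$. Proceeding cyclically, and identifying $G_2\times G_1\times G_3$ and $G_3\times G_1\times G_2$ with $G$ through the obvious permutation isomorphisms, I obtain sofic $G$-subshifts $X_2$ and $X_3$ whose $G_2$- respectively $G_3$-subaction is conjugate to $Y_2$ respectively $Y_3$, and on which $G_1\times G_3$ respectively $G_1\times G_2$ act trivially. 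Since each $Y_i$ is non-empty and the corresponding subaction is conjugate to it, each $X_i$ is non-empty. (Each application requires the simulated group to be recursively presented; this holds in the intended applications, e.g.\ when the $G_i$ have decidable word problem.)

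Next I would form the product subshift $X \isdef X_1 \times X_2 \times X_3 \subset (\ag_{X_1}\times \ag_{X_2}\times \ag_{X_3})^{G}$ with the diagonal shift action, and check that it is strongly aperiodic. Suppose $g=(g_1,g_2,g_3)\in G$ fixes some $(x_1,x_2,x_3)\in X$, so that $\sigma^g(x_i)=x_i$ for each $i$. Because $G_2\times G_3$ acts trivially on $X_1$, we have $\sigma^g(x_1)=\sigma^{(g_1,1,1)}(x_1)=x_1$, i.e.\ $g_1$ fixes $x_1$ under the $G_1$-subaction; transporting this fixed point through the $G_1$-equivariant conjugacy onto the strongly aperiodic $Y_1$ forces $g_1=1_{G_1}$. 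The identical argument applied to $X_2$ and to $X_3$ yields $g_2=1_{G_2}$ and $g_3=1_{G_3}$, so $g=1_G$. Thus $X$ is a non-empty strongly aperiodic sofic $G$-subshift.

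It remains to replace the sofic subshift $X$ by an SFT. Each $X_i$ is sofic, so there are $G$-SFTs $\widehat{X_i}$ and factor maps $\pi_i\colon \widehat{X_i}\twoheadrightarrow X_i$; then $\widehat{X}\isdef \widehat{X_1}\times \widehat{X_2}\times \widehat{X_3}$ is a $G$-SFT, since a finite product of SFTs over a common group is defined by the union of the (finitely many) forbidden patterns of the factors, and $\pi\isdef \pi_1\times \pi_2\times \pi_3\colon \widehat{X}\twoheadrightarrow X$ is a factor map. Strong aperiodicity is inherited upward along surjective factor maps: if $\sigma^g(\hat x)=\hat x$ then $\sigma^g(\pi(\hat x))=\pi(\hat x)$, and since $X$ is strongly aperiodic this forces $g=1_G$. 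As $X\neq \emptyset$ and $\pi$ is onto, $\widehat{X}\neq \emptyset$. Hence $\widehat{X}$ is a non-empty strongly aperiodic $G$-SFT, as required.

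The conceptual crux, and the step I expect to require the most care, is that \Cref{theorem_expansive_case} forces the two auxiliary factors to act trivially, so a single application not only fails to give aperiodicity in all three directions but actively destroys it in two of them. The triple product is exactly what recombines three ``aperiodic-in-one-coordinate'' pieces into a genuinely strongly aperiodic object; the bookkeeping verifying that the only common period is $1_G$ is where one must be attentive to which coordinate each factor controls. A secondary, easily overlooked point is that the final passage from sofic to SFT is legitimate only because strong aperiodicity lifts to \emph{extensions} (SFT covers), not to factors.
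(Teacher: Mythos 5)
Your proposal is correct and follows essentially the same route as the paper: three applications of \Cref{theorem_expansive_case} (one per coordinate), the product $X_1\times X_2\times X_3$, the same coordinate-wise argument that triviality of the auxiliary actions plus conjugacy of the $G_i$-subaction to the strongly aperiodic $Y_i$ forces $g_i=1_{G_i}$, and the final passage from sofic to SFT by pulling freeness back along an SFT cover. The recursive-presentation caveat you flag is real, but it is equally implicit in the paper's own proof, which invokes \Cref{theorem_expansive_case} under the lemma's stated hypotheses in exactly the same way.
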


\begin{proof}
	Recall the following general property of factor maps. Suppose there is a factor $\phi\colon (X,T) \twoheadrightarrow (Y,S)$ and let $x \in X$ such that $T^g(x) = x$. Then $S^g(\phi(x)) = \phi(T^g(x)) = \phi(x) \in Y$. This means that if $S$ is a free action then $T$ is also a free action. In particular, it suffices to exhibit a non-empty strongly aperiodic sofic subshift to conclude.
	
	By \Cref{theorem_expansive_case} we can construct for each $i \in \{1,2,3\}$ a non-empty sofic $(G_1 \times G_2 \times G_3)$-subshift $X_i$ whose $G_i$-subaction is conjugate to $Y_i$ and is invariant under the action of $G_j \times G_k$ with $i \notin \{j,k\}$. Let $X = X_1 \times X_2 \times X_3$. We claim that $X$ is a non-empty strongly aperiodic sofic subshift.
	
	Clearly, $X$ is a non-empty sofic subshift. Let $g = (g_1,g_2,g_3) \in G_1 \times G_2 \times G_3$ and $x \in X$ such that $\sigma^{(g_1,g_2,g_3)}(x) = x$. Write $x = (x_1,x_2,x_3)$ and note that
	$$\sigma^{(g_1,g_2,g_3)} = \sigma^{(g_1,1_{G_2},1_{G_3})} \circ \sigma^{(1_{G_1},g_2,1_{G_3})} \circ \sigma^{(1_{G_1},1_{G_2},g_3)}.$$
	
	As $X_i$ is invariant under $G_j \times G_k$, we have that \begin{align*}
	(x_1,x_2,x_3)  & = \sigma^{(g_1,g_2,g_3)}(x_1,x_2,x_3)\\ & = (\sigma^{(g_1,1_{G_2},1_{G_3})}(x_1), \sigma^{(1_{G_1},g_2,1_{G_3})}(x_2) \sigma^{(1_{G_1},1_{G_2},g_3)}(x_3)).
	\end{align*}
	
	On the other hand, as the $G_i$-subaction is conjugate to $Y_i$ which is strongly aperiodic, we conclude that $g_i = 1_{G_i}$. Therefore $g = 1_{G_1 \times G_2 \times G_3}$. As the choice of $x$ was arbitrary, this shows that $X$ is strongly aperiodic.\end{proof}

\Cref{lemmaperiodic} requires the existence of a non-empty effectively closed and strongly aperiodic $G_i$-subshift. Luckily, these objects always exist whenever the word problem of the group is decidable. Furthermore, in the class of recursively presented groups, non-empty effectively closed subshifts which are strongly aperiodic exist if and only if the word problem of the group is decidable. This is proven in~\cite{Jeandel2015} and~\cite{AubrunBarbieriThomasse2015} and can be formally stated as follows.

\begin{lemma}[\cite{AubrunBarbieriThomasse2015}~Theorem~2.8]\label{Lemma_ABT}
	Let $G$ be a recursively presented group. There exists a non-empty, effectively closed and strongly aperiodic $G$-subshift if and only if the word problem of $G$ is decidable.
\end{lemma}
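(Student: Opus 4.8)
The statement is an equivalence, so I would prove the two implications separately. Note first that since $G$ is recursively presented its word problem $\mathrm{WP}(G)=\{w\in S^*\mid w=_G 1_G\}$ is recursively enumerable, so in the forward direction it suffices to show that the \emph{complement} of $\mathrm{WP}(G)$ is also recursively enumerable; the backward direction requires an explicit construction.

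\textbf{From an aperiodic subshift to decidability.} Suppose $X\subseteq\mathcal{A}^G$ is non-empty, effectively closed and strongly aperiodic. The plan is to semi-decide, given a word $w$ representing $g\in G$, the assertion $g\neq 1_G$. The key point is a compactness argument: for fixed $g$, the clopen sets $\{x\in X\mid x(gk)\neq x(k)\}$ with $k$ ranging over $G$ cover $X$ precisely when $\sigma^g$ has no fixed point, which by strong aperiodicity happens exactly when $g\neq 1_G$; by compactness a finite subcover exists, so there is a finite $F\subseteq G$ such that no configuration of $X$ satisfies $x(gk)=x(k)$ for all $k\in F$. Conversely, if $g=1_G$ then every configuration of the non-empty $X$ trivially satisfies these equalities. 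Thus $g\neq 1_G$ if and only if some finite ``$g$-invariance-on-$F$'' constraint is incompatible with membership in $X$. I would turn this into an enumeration: represent $F$ by finitely many words, and enumerate in parallel the true coincidences of words via the $\mathrm{WP}(G)$ enumerator (using that $\mathrm{WP}(G)$ is r.e.) together with the inadmissible finite patterns of $X$ (using that, for an effectively closed subshift, the set of patterns extending to no configuration is recursively enumerable). A certificate that every admissible pattern compatible with the invariance constraints is inadmissible exists precisely when $g\neq 1_G$, and---because a genuine configuration of $X$ always witnesses compatibility when $g=1_G$---no false certificate can ever be produced. Hence the complement of $\mathrm{WP}(G)$ is r.e. and $\mathrm{WP}(G)$ is decidable.

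\textbf{From decidability to an aperiodic subshift.} For the converse I would give an explicit construction. Since $\mathrm{WP}(G)$ is decidable one can computably enumerate $G$ without repetition as $g_0=1_G,g_1,g_2,\dots$ and decide equality of words. I would then build a subshift $X$ in which every configuration carries, around each group element, a hierarchy of finite markers encoding an injective ``address'' that distinguishes distinct group elements, with the local compatibility rules chosen so that two distinct elements are always forced to carry distinct addresses. Decidability of $\mathrm{WP}(G)$ is exactly what makes this finite list of forbidden patterns computable, hence $X$ effectively closed, while an explicit computable configuration witnesses $X\neq\emptyset$. Strong aperiodicity then follows because $\sigma^g(x)=x$ would equate the addresses of $h$ and $gh$ for all $h$, forcing $g=1_G$.

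\textbf{Main obstacle.} The forward implication is essentially the compactness-plus-effectiveness argument above, whose only genuine subtlety is the avoidance of circularity: one cannot presuppose a decision procedure for word equality inside the argument, and I resolve this by only ever \emph{verifying} equalities through the r.e. procedure for $\mathrm{WP}(G)$ and checking that soundness holds regardless of which equalities have been verified so far. The real work is the backward construction: one must simultaneously guarantee non-emptiness, make the forbidden patterns computable from the word problem, and---crucially---ensure that \emph{every} configuration of the orbit closure, not merely one well-chosen point, has trivial stabilizer, since the set of aperiodic points is only a $G_\delta$ and need not be closed. Controlling the whole orbit closure while keeping the defining rules finite-range and computable is the delicate heart of the proof.
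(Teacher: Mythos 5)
Your forward implication is essentially the argument the paper itself invokes: it attributes this direction to Jeandel, and the content is exactly your compactness argument — for $g \neq 1_G$, strong aperiodicity means the non-coincidence sets cover $X$, so finitely many constraints on a finite $F \subset G$ are incompatible with $X$, and this incompatibility can be certified by dovetailing the enumeration of the (recursively enumerable) word problem with the enumeration of inadmissible patterns of the effectively closed subshift, soundness being guaranteed because an actual configuration of the non-empty $X$ witnesses compatibility whenever $g = 1_G$. That half of your proposal is correct and matches the paper.

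The backward implication, however, contains a genuine gap, located exactly where you yourself call it ``the delicate heart of the proof.'' Your proposed mechanism --- a hierarchy of finite markers whose local rules ``force two distinct elements to carry distinct addresses'' --- is not a construction but a restatement of the goal: since any ``address'' readable from a configuration is a function of the configuration restricted to a translated neighborhood, demanding that all addresses be pairwise distinct in \emph{every} configuration of $X$ is literally the demand that every configuration have trivial stabilizer, i.e., strong aperiodicity itself. You give no indication of how finite-range rules over a finite alphabet could enforce this on an arbitrary finitely generated group with decidable word problem, and no marker/hierarchy construction of this kind is known at that level of generality (this is what Robinson-type constructions achieve on $\ZZ^2$, and already there it is hard and heavily uses the geometry of the grid). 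The paper closes this gap by a completely different, non-constructive route, citing Theorem 2.8 of Aubrun--Barbieri--Thomass\'e: by the Lov\'asz local lemma result of Alon, Grytczuk, Ha\l uszczak and Riordan, every Cayley graph $\Gamma(G,S)$ admits a vertex coloring with $2^{19}|S|^2$ colors in which the color sequence along any non-self-intersecting path is square-free; the set of all such colorings is a subshift, it is strongly aperiodic because a nontrivial period would force a squared color word along a suitable simple path, it is non-empty by the local lemma together with a compactness argument --- notably \emph{not} by exhibiting an explicit computable configuration, as your sketch assumes --- and it is effectively closed because decidability of the word problem lets a Turing machine construct the balls $B(1_G,n)$ of the Cayley graph and enumerate codings of all patterns containing a squared colored path. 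Without this (or an equally substantive) construction, your backward direction does not go through.
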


The only if part of this proof is a result by Jeandel~\cite{Jeandel2015} and is basically the fact that a strongly aperiodic SFT (or more generally, an effectively closed strongly aperiodic subshift) in a recursively presented group gives enough information to recursively enumerate the complement of the word problem of the group. Conversely, the existence part of the proof of \Cref{Lemma_ABT} relies on a proof by Alon, Grytczuk, Haluszczak and Riordan~\cite{alonetal_nonrepetitivecoloringofgraphs} which uses Lov\'asz local lemma to show that every finite regular graph of degree $\Delta$ can be vertex-coloured with at most $(2e^{16}+1)\Delta^2$ colours in a way such that the sequence of colours in any non-intersecting path does not contain a square word. Using compactness arguments this result is extended to Cayley graphs $\Gamma(G,S)$ of finitely generated groups where the bound takes the form $2^{19}|S|^2$ colours where $|S|$ is the cardinality of a set of generators of $G$. One can also show that the set of square-free vertex-colourings of $\Gamma(G,S)$ yields a strongly aperiodic subshift, which is thus non-empty if the alphabet has at least $2^{19}|S|^2$ symbols. In the case where $G$ has decidable word problem, a Turing machine can construct a representation of the sequence of balls $B(1_G,n)$ of the Cayley graph and enumerate a codification of all patterns containing a square coloured path.

Adding up \Cref{lemmaperiodic} and \Cref{Lemma_ABT} gives us the following result.

\begin{theorem}\label{corollaryaperioci}
	For any triple of infinite and finitely generated groups $G_1,G_2$ and $G_3$ with decidable word problem, then $G_1 \times G_2 \times G_3$ admits a non-empty strongly aperiodic subshift of finite type.
\end{theorem}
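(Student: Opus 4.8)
The plan is to assemble this result directly from the two lemmas that have just been established, namely \Cref{lemmaperiodic} and \Cref{Lemma_ABT}. The only conceptual gap to bridge is that \Cref{Lemma_ABT} is stated for recursively presented groups, whereas the hypothesis here is the a priori stronger assumption of decidable word problem. So the first thing I would verify is the standard implication that a finitely generated group with decidable word problem is recursively presented: if there is an algorithm deciding whether a word $w \in S^*$ represents $1_G$, then the set $\texttt{WP}(G) = \{w \in S^* \mid w =_G 1_G\}$ is recursive, hence in particular recursively enumerable, which is exactly the condition of being recursively presented.

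With that observation in hand, I would apply \Cref{Lemma_ABT} to each factor individually. Each $G_i$ is infinite, finitely generated, and recursively presented by the preceding remark, and by assumption has decidable word problem; therefore the ``if'' direction of \Cref{Lemma_ABT} furnishes, for each $i \in \{1,2,3\}$, a non-empty effectively closed $G_i$-subshift $Y_i \subset \ag^{G_i}$ which is strongly aperiodic.

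The final step is to feed these three subshifts into \Cref{lemmaperiodic}. Its hypotheses are met verbatim: each $G_i$ is infinite and finitely generated, and we have just produced the required non-empty effectively closed strongly aperiodic $Y_i$. The conclusion of \Cref{lemmaperiodic} is precisely that $G_1 \times G_2 \times G_3$ admits a non-empty strongly aperiodic SFT, which is the statement to be proved.

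I do not expect a genuine obstacle in this argument, since all the substantial work — the simulation machinery of \Cref{theorem_expansive_case} feeding into \Cref{lemmaperiodic}, and the Lov\'asz-local-lemma colouring argument underlying \Cref{Lemma_ABT} — has already been carried out. The proof is a bookkeeping composition of the two lemmas, and the only point requiring a moment's care is checking that the hypotheses line up, in particular that decidability of the word problem delivers the recursive presentation demanded by \Cref{Lemma_ABT}.
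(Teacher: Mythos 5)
Your proposal is correct and follows essentially the same route as the paper, which obtains the theorem precisely by combining \Cref{lemmaperiodic} with \Cref{Lemma_ABT} applied to each factor $G_i$. Your explicit verification that a decidable word problem yields a recursive (hence recursively enumerable) set $\texttt{WP}(G)$, so that each $G_i$ is recursively presented and \Cref{Lemma_ABT} applies, is exactly the bookkeeping point the paper leaves implicit.
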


Note that the hypothesis of having decidable word problem is necessary, if not, any finitely generated and recursively presented $G_i$ with undecidable word problem gives a counterexample by \Cref{Lemma_ABT}. On the other hand, to the best of the knowledge of the author, there are no known examples of a group of the form $G_1 \times G_2$ where both $G_i$ are infinite, finitely generated, have decidable word problem, and $G_1 \times G_2$ does not admit a strongly aperiodic SFT. Therefore, it is possible that \Cref{corollaryaperioci} can be improved in that direction.

\subsection{Strongly aperiodic SFTs in branch groups}

Here we exhibit a new class of groups which admit strongly aperiodic SFTs. In particular, this class contains the Grigorchuk group~\cite{Grigorchuk1980}. In order to present this result we need to recall the notion of commensurability.

\begin{definition}
	We say two groups $G,H$ are commensurable if they have isomorphic subgroups of finite index. Namely, $G' \leq G$, $H' \leq H$ such that $[G:G'] < \infty, [H:H'] < \infty$ and $G' \cong H'$.
\end{definition}

A result by Carroll and Penland~\cite{CarrollPenland} establishes that the group property of admitting a non-empty strongly aperiodic SFT is invariant under commensurability. In their article they say that a group $G$ is \emph{weakly periodic} if every non-empty SFT $X \subset \ag^G$ admits a periodic configuration, that is, there exists $x \in X$ and $g \in G \setminus \{1_G\}$ such that $\sigma^g(x) = x$. In other words, a group $G$ is weakly periodic if it does not admit a non-empty strongly aperiodic SFT. Finitely generated free groups are an example of weakly periodic groups~\cite{Piantadosi2008}.

\begin{theorem}[\cite{CarrollPenland}, Theorem~1]\label{teorema_carolpenland}
	Let $G_1$ and $G_2$ be finitely generated commensurable groups. If $G_1$ is
	weakly periodic, then $G_2$ is weakly periodic.
\end{theorem}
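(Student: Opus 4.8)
The plan is to show that the property $P(G)$ --- ``$G$ admits a non-empty strongly aperiodic SFT'' --- is an invariant of commensurability among finitely generated groups; since ``weakly periodic'' is exactly $\neg P$ and commensurability is symmetric, the statement follows at once from the contrapositive. Because $P$ is obviously preserved under group isomorphism, and because commensurability of $G_1$ and $G_2$ is witnessed by finite-index subgroups $G_1' \leq G_1$ and $G_2' \leq G_2$ with $G_1' \cong G_2'$ (each finitely generated, being finite-index in a finitely generated group), it suffices to prove that $P$ is preserved in \emph{both} directions along a single finite-index inclusion $H \leq G$. Then $P(G_1) \iff P(G_1') \iff P(G_2') \iff P(G_2)$.

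For the descending direction ($P(G) \Rightarrow P(H)$), I would fix right-coset representatives $G = \bigsqcup_{i=1}^{n} H t_i$ with $n = [G:H]$ and use that the map $\Theta(x)(h) \isdef (x(h t_1), \dots, x(h t_n))$ is a topological conjugacy between the $H$-subaction of the full $G$-shift $\ag^G$ and the full $H$-shift $(\ag^{n})^H$; a direct check gives $\Theta \circ \sigma^h = \sigma^h \circ \Theta$ for $h \in H$. Under $\Theta$ a finite family of forbidden $G$-patterns becomes a finite family of forbidden $H$-patterns, since every finite window in $G$ meets only a bounded window of $H$; hence a $G$-SFT $X$ is carried to an $H$-SFT $\Theta(X)$. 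Moreover any $H$-period of a configuration is in particular a $G$-period, so $G$-strong aperiodicity of $X$ forces $\Theta(X)$ to be $H$-strongly aperiodic, and non-emptiness is preserved. This gives $P(H)$.

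The ascending direction ($P(H) \Rightarrow P(G)$) is the main difficulty, and I would first reduce to a normal subgroup. Replacing $H$ by its normal core $N \isdef \bigcap_{g \in G} gHg^{-1}$, which is normal and of finite index in $G$ with $N \leq H$, the descending direction applied to $N \leq H$ yields $P(N)$; so it is enough to treat a normal finite-index subgroup $N \trianglelefteq G$ with finite quotient $Q = G/N$, canonical projection $\pi\colon G \to Q$, and a fixed finite generating set $S$ of $G$. Starting from a non-empty strongly aperiodic $N$-SFT $Y \subseteq \bg^N$, I would build $X = X_Y \times \tilde{X}_Q$ as a product of two layers. The first layer $X_Y \isdef \{ x \in \bg^G \mid (n \mapsto x(gn)) \in Y \ \text{for all } g \in G \}$ forces the restriction of $x$ to every left $N$-coset to lie in $Y$; it is a non-empty $G$-SFT, and any period $a \in N$ of an $x \in X_Y$ restricts to an $N$-period of the coset configuration $n \mapsto x(n) \in Y$, forcing $a = 1_G$ by strong aperiodicity of $Y$. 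The second layer is the nearest-neighbour $G$-SFT $\tilde{X}_Q \isdef \{ c \in Q^G \mid c(gs) = c(g)\,\pi(s)\ \text{for all } g \in G,\, s \in S \}$, whose configurations are exactly $c(g) = q\,\pi(g)$ for $q \in Q$ (the local rule is globally consistent precisely because $\pi$ is a homomorphism); on it $\sigma^a c = c$ forces $\pi(a) = 1_Q$, i.e. $a \in N$.

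Putting the layers together with the diagonal $G$-action, $X$ is a non-empty $G$-SFT, and if $\sigma^a(x,c) = (x,c)$ then the $\tilde{X}_Q$-coordinate gives $a \in N$ and the $X_Y$-coordinate then gives $a = 1_G$, so $X$ is strongly aperiodic and $P(G)$ holds. The crux of the whole argument is exactly this ascending step: the naive ``induce $Y$ coset-by-coset'' subshift $X_Y$ is \emph{not} strongly aperiodic on its own, since it cannot exclude periods $a \notin N$ that merely permute the cosets while fixing the configuration. The hard part is therefore detecting the coset of each position, which is what the passage to the normal core plus the coset-colouring layer $\tilde{X}_Q$ accomplishes, killing precisely the periods outside $N$.
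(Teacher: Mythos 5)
The paper does not actually prove this statement: it is imported verbatim as Theorem~1 of \cite{CarrollPenland} and used as a black box in \Cref{theorema_bonito} and \Cref{teorema_branch}, so there is no in-paper argument to compare yours against. Your blind proof is, however, correct and self-contained, and it supplies exactly the kind of argument the citation stands in for: reduce commensurability to a single finite-index inclusion $H \leq G$ of finitely generated groups; go \emph{down} via the coset-recoding conjugacy $\Theta(x)(h) = (x(ht_1),\dots,x(ht_n))$, under which a $G$-SFT becomes an $H$-SFT (each forbidden $G$-pattern placed at $g = ht_i$ turns into one of finitely many $i$-indexed forbidden $H$-patterns placed at $h$) and $H$-periods lift to $G$-periods; go \emph{up} by inducing a strongly aperiodic SFT of a finite-index \emph{normal} subgroup $N$ over its cosets, together with a $Q$-coloring layer whose configurations are precisely the translates $g \mapsto q\pi(g)$ of the quotient map, which kills all periods outside $N$. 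You also correctly isolate the crux that a naive induction alone fails: $X_Y$ by itself admits coset-permuting periods, and it is the coloring layer (consistent as an SFT precisely because $\pi$ is a homomorphism) that excludes them. Three small remarks. First, in $\tilde{X}_Q$ you should either take $S$ symmetric or observe that the rule applied at $gs^{-1}$ gives $c(gs^{-1}) = c(g)\pi(s)^{-1}$, so connectivity of the Cayley graph really does force $c(g) = c(1_G)\pi(g)$. Second, non-emptiness of $X_Y$ deserves its one-line proof: pick a left transversal $G = \bigsqcup_i t_iN$ and set $x(t_in) \isdef y(n)$, so that each coset map is a translate of $y$. Third, your normal-core reduction is clean but not strictly necessary: coloring by \emph{right} cosets $H\backslash G$ (on which right multiplication by generators is well defined even for non-normal $H$) only forces a period $a$ of a coloring $c(g) = Hg_0g$ into the conjugate $g_0^{-1}Hg_0$, but evaluating the induced layer on the left coset $g_0^{-1}H$ then still gives $a = 1_G$; your version trades this conjugation bookkeeping for one extra application of the descending direction, which you had already established, so the logic is sound either way.
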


With this result in hand, we can show the following:

\begin{lemma}\label{theorema_bonito}
	Let $G$ be a finitely generated group with decidable word problem such that $G$ is commensurable to $G \times G$. Then $G$ admits a non-empty strongly aperiodic subshift of finite type.
\end{lemma}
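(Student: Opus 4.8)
The plan is to upgrade the hypothesis $G \sim G \times G$ (writing $\sim$ for commensurability) to $G \sim G \times G \times G$, apply \Cref{corollaryaperioci} to the triple product to produce a non-empty strongly aperiodic SFT there, and then transport this conclusion back to $G$ using the commensurability invariance of \Cref{teorema_carolpenland}.

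First I would record two elementary facts about commensurability. It is an equivalence relation on groups and in particular is transitive. Moreover it is stable under taking a direct product with a fixed group: if $A \sim B$ is witnessed by finite-index subgroups $A' \leq A$ and $B' \leq B$ with $A' \cong B'$, then for any group $C$ the subgroups $A' \times C \leq A \times C$ and $B' \times C \leq B \times C$ have the same (finite) indices $[A:A']$ and $[B:B']$, and $A' \times C \cong B' \times C$ via the isomorphism $\psi \times \mathrm{id}_C$; hence $A \times C \sim B \times C$. Applying this product-stability with $A = G$, $B = G \times G$ and $C = G$ to the hypothesis $G \sim G \times G$ gives $G \times G \sim G \times G \times G$, and combining this with $G \sim G \times G$ and transitivity yields $G \sim G \times G \times G$.

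We may assume $G$ is infinite, since if $G$ is finite it admits a non-empty strongly aperiodic SFT directly: the shift orbit of an injective configuration $G \to G$ is a finite, hence finite-type, subshift on which the shift acts freely. When $G$ is infinite, the group $G \times G \times G$ is infinite, finitely generated, and has decidable word problem, and its three factors each satisfy the hypotheses of \Cref{corollaryaperioci}; that theorem therefore provides a non-empty strongly aperiodic SFT over $G \times G \times G$, so in the terminology of \Cref{teorema_carolpenland} the group $G \times G \times G$ is not weakly periodic. Since $G$ and $G \times G \times G$ are commensurable finitely generated groups, the contrapositive of \Cref{teorema_carolpenland} shows that $G$ is not weakly periodic either, i.e.\ $G$ admits a non-empty strongly aperiodic SFT, which is the desired conclusion.

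The only step that requires genuine care is the commensurability bookkeeping of the second paragraph, namely promoting $G \sim G \times G$ to $G \sim G \times G \times G$ via the product-stability and transitivity of $\sim$. Once this is established, the two cited theorems do all the remaining work, so I do not anticipate a serious obstacle beyond keeping track of the finite-index subgroups.
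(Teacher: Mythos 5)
Your proposal is correct and follows essentially the same route as the paper's proof: both reduce to the infinite case (via an explicit strongly aperiodic SFT on a finite group), upgrade $G$ commensurable to $G \times G$ into $G$ commensurable to $G \times G \times G$ by multiplying the witnessing finite-index subgroups by $G$ and using transitivity, and then combine \Cref{corollaryaperioci} with \Cref{teorema_carolpenland}. Your ``product-stability'' lemma is exactly the paper's construction of $\widetilde{H}_1 = G \times H_1$ and $\widetilde{H}_2 = G \times H_2$, so there is no substantive difference.
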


\begin{proof}
	If $G$ is finite, the result is immediate as $X = \{ x \in \{a,b\}^G \mid |x^{-1}(a)| = 1 \}$ is a non-empty strongly aperiodic subshift of finite type. We suppose from now on that $G$ is infinite. If $G$ is commensurable to $G \times G$, then there exists $H_1 \leq G$, $H_2 \leq G \times G$ of finite index such that $H_1 \cong H_2$. In particular, if we define $\widetilde{H}_1 = G \times H_1$ and $\widetilde{H}_2 = G \times H_2$ we get that $\widetilde{H}_1 \cong \widetilde{H}_2$, $\widetilde{H}_1$ is a finite index subgroup of $G \times G$ and $\widetilde{H}_2$ is a finite index subgroup of $G \times G \times G$. Therefore $G \times G$ and $G \times G \times G$ are commensurable. As commensurability is an equivalence relation in the class of groups we get that $G$ is commensurable to $G \times G \times G$.
	
	As $G$ is infinite, finitely generated and has decidable word problem, \Cref{corollaryaperioci} implies that $G \times G \times G$ is not weakly periodic. It follows by \Cref{teorema_carolpenland} that $G$ is not weakly periodic as well and thus admits a non-empty strongly aperiodic SFT.
\end{proof}

The Grigorchuk group~\cite{Grigorchuk1980} is a famous example of an infinite and finitely generated group of intermediate growth which contains no isomorphic copy of $\ZZ$ and has decidable word problem. It can be defined as the group generated by the involutions $a,b,c,d$ over $\{0,1\}^{\NN}$ as follows, let $x = x_0x_1x_2\dots$ and\begin{align*}
a(x) & = \begin{cases}
1x_1x_2\dots  \mbox{ if } x_0 = 0\\
0x_1x_2\dots  \mbox{ if } x_0 = 1\\
\end{cases} & b(x) & = \begin{cases}
0a(x_1x_2\dots)  \mbox{ if } x_0 = 0\\
1c(x_1x_2\dots)  \mbox{ if } x_0 = 1\\
\end{cases}\\
c(x) & = \begin{cases}
0a(x_1x_2\dots)  \mbox{ if } x_0 = 0\\
1d(x_1x_2\dots)  \mbox{ if } x_0 = 1\\
\end{cases} & d(x) & = \begin{cases}
0x_1x_2\dots\ \ \ \  \mbox{ if } x_0 = 0\\
1b(x_1x_2\dots)  \mbox{ if } x_0 = 1\\
\end{cases}
\end{align*}

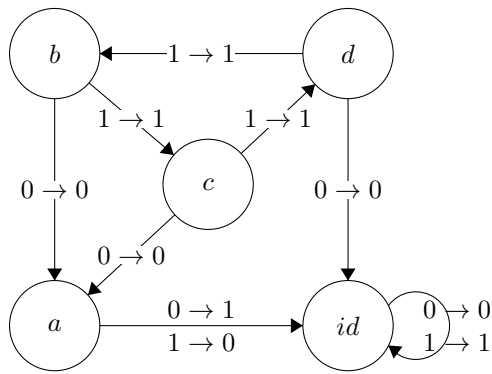
\begin{figure}[h!]
	\centering
	\begin{tikzpicture}[scale=0.2]
	\tikzstyle{every node}+=[inner sep=0pt]
	\draw [black] (20.5,-39.7) circle (3);
	\draw (20.5,-39.7) node {$a$};
	\draw [black] (40,-39.7) circle (3);
	\draw (40,-39.7) node {$id$};
	\draw [black] (20.5,-21.6) circle (3);
	\draw (20.5,-21.6) node {$b$};
	\draw [black] (30.7,-30.3) circle (3);
	\draw (30.7,-30.3) node {$c$};
	\draw [black] (40,-21.6) circle (3);
	\draw (40,-21.6) node {$d$};
	\draw [black] (23.5,-39.7) -- (37,-39.7) node[midway, above = 0.1, fill=white] {$0 \to 1$} node[midway, below= 0.1,fill=white] {$1 \to 0$};
	\fill [black] (37,-39.7) -- (36.2,-39.2) -- (36.2,-40.2);
	\draw [black] (42.68,-38.377) arc (144:-144:2.25);
	\draw (47.25,-39.7) node [right, above = 0.1] {$0 \to 0$};
	\draw (47.25,-39.7) node [right, below = 0.1] {$1 \to 1$};
	\fill [black] (42.68,-41.02) -- (43.03,-41.9) -- (43.62,-41.09);
	\draw [black] (40,-24.6) -- (40,-36.7) node[midway,fill=white] {$0 \to 0$};
	\fill [black] (40,-36.7) -- (40.5,-35.9) -- (39.5,-35.9);
	\draw [black] (37,-21.6) -- (23.5,-21.6) node[midway,fill=white] {$1 \to 1$};
	\fill [black] (23.5,-21.6) -- (24.3,-22.1) -- (24.3,-21.1);
	\draw [black] (22.78,-23.55) -- (28.42,-28.35) node[midway,fill=white] {$1 \to 1$};
	\fill [black] (28.42,-28.35) -- (28.13,-27.45) -- (27.48,-28.21);
	\draw [black] (32.89,-28.25) -- (37.81,-23.65) node[midway,fill=white] {$1 \to 1$};
	\fill [black] (37.81,-23.65) -- (36.88,-23.83) -- (37.57,-24.56);
	\draw [black] (20.5,-24.6) -- (20.5,-36.7) node[midway,fill=white] {$0 \to 0$};
	\fill [black] (20.5,-36.7) -- (21,-35.9) -- (20,-35.9);
	\draw [black] (28.49,-32.33) -- (22.71,-37.67) node[midway,fill=white] {$0 \to 0$};
	\fill [black] (22.71,-37.67) -- (23.63,-37.49) -- (22.96,-36.76);
	\end{tikzpicture}
	\caption{Mealy automaton defining the Grigorchuk group.}
	\label{figure_grigori}
\end{figure}

These four actions can be represented in the Mealy automaton of \Cref{figure_grigori}. Here an arrow of the form $i \to j$ means: ``replace $i$ by $j$ and follow the arrow to the next node''. To compute the image of $x \in \{0,1\}^{\NN}$ under one of these involutions, initialize $n = 0$, start at the node $\texttt{NODE}$ which corresponds to the action and follow the outgoing arrow of the form $x_n \to i$ towards the node it points at. Replace $x_n$ by $i$, $\texttt{NODE}$ by the node the arrow points at and increase $n$ by $1$. The string obtained after iterating this process infinitely many steps is the image of $x$.

Besides the remarkable aforementioned properties, the Grigorchuk group is commensurable to its square.

\begin{lemma}[\cite{ceccherini-SilbersteinC09}~Lemma~6.9.11]
	The Grigorchuk group $G$ is commensurable to $G\times G$.
\end{lemma}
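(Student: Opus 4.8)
The plan is to exploit the self-similar action of the Grigorchuk group $G$ on the rooted binary tree $\{0,1\}^*$, equivalently on its boundary $\{0,1\}^{\NN}$, encoded by the Mealy automaton of Figure~\ref{figure_grigori}. Let $\varepsilon \colon G \to \ZZ/2\ZZ$ record whether an element swaps the two halves $0\{0,1\}^{\NN}$ and $1\{0,1\}^{\NN}$; composing automorphisms shows $\varepsilon$ is a homomorphism with $\varepsilon(a)=1$ and $\varepsilon(b)=\varepsilon(c)=\varepsilon(d)=0$, so $H \isdef \ker\varepsilon$ (the stabilizer of the first level) has index $2$ in $G$. Each $g \in H$ preserves the two halves and, after identifying each half with $\{0,1\}^{\NN}$ by deleting the first letter, induces a pair $(g_0,g_1)$ of elements of $G$; this gives a homomorphism $\psi \colon H \to G \times G$, $\psi(g)=(g_0,g_1)$. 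The commensurability will follow at once if I show $\psi$ is injective with finite-index image, since then $H$ has index $2$ in $G$ and is isomorphic through $\psi$ to a finite-index subgroup of $G\times G$.

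Injectivity is immediate: if $\psi(g)=(1_G,1_G)$ then $g$ fixes the first level and acts trivially on each half, hence trivially on the whole tree, and faithfulness of the action forces $g=1_G$. For the index, I would first read the sections of the generators directly off the automaton:
\[
\psi(b)=(a,c),\qquad \psi(c)=(a,d),\qquad \psi(d)=(1_G,b),
\]
and, using that conjugation by $a$ swaps the two coordinates,
\[
\psi(aba)=(c,a),\qquad \psi(aca)=(d,a),\qquad \psi(ada)=(b,1_G).
\]
A Reidemeister--Schreier computation with transversal $\{1_G,a\}$ shows $H=\langle b,c,d,aba,aca,ada\rangle$, so $\psi(H)$ is generated by these six pairs; in particular both coordinate projections of $\psi(H)$ are onto $G$, since the first (resp. second) coordinates already include $a,b,c,d$.

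The heart of the argument, and the step I expect to be the main obstacle, is upgrading ``surjective on each coordinate'' to genuine finite index — this is precisely the regular-branch structure of $G$. The plan is to isolate one coordinate. Since $\psi(d)=(1_G,b)\in\psi(H)$ and conjugating $(1_G,b)$ by $(g_0,g_1)\in\psi(H)$ yields $(1_G,\,g_1 b g_1^{-1})$ with $g_1$ ranging over the full second projection $G$, the subgroup $\{1_G\}\times B$ lies in $\psi(H)$, where $B\isdef\langle b\rangle^{G}$ is the normal closure of $b$. The symmetric argument from $\psi(ada)=(b,1_G)$ gives $B\times\{1_G\}\subseteq\psi(H)$, whence $B\times B\subseteq\psi(H)$. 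It then remains only to see that $B$ has finite index in $G$, for which I would invoke the classical fact that the Grigorchuk group is just-infinite, so the nontrivial normal subgroup $B$ is automatically of finite index (alternatively, compute $[G:B]$ via the abelianization and the self-similar relations). Consequently $B\times B$, and a fortiori $\psi(H)$, has finite index in $G\times G$, finishing the proof. The delicate bookkeeping throughout is the involution relations $b^2=c^2=d^2=1$ together with $bc=d$ and the swap formulas $\psi(aba)=(c,a)$, etc.; I would verify each section against the automaton before assembling them.
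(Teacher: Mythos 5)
Your proof is correct, but note that the paper itself contains no proof of this lemma: it is imported verbatim from \cite{ceccherini-SilbersteinC09}, and your argument is essentially the standard one given in that reference. All the concrete steps check out: the sections $\psi(b)=(a,c)$, $\psi(c)=(a,d)$, $\psi(d)=(1_G,b)$ agree with the automaton of \Cref{figure_grigori}; conjugation by $a$ does swap the coordinates; the Schreier generators of $H=\ker\varepsilon$ are as you list; injectivity of $\psi$ follows from faithfulness of the action (which holds by definition, since $G$ is given as a group of transformations of $\{0,1\}^{\NN}$); and the conjugation trick is sound, since $\psi(H)$ is a subgroup containing $(1_G,b)$ whose second projection is all of $G$, so it contains $\{1_G\}\times\langle b\rangle^G$, and symmetrically $\langle b\rangle^G\times\{1_G\}$, hence $B\times B$ with $B=\langle b\rangle^G$. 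The only place you lean on an external black box is the finiteness of $[G:B]$ via just-infiniteness of $G$; that is a legitimate classical theorem of Grigorchuk, but it is heavier than necessary. A self-contained alternative (and the route taken in the cited reference) is to note that in $G/B$ one has $\bar{d}=\bar{c}$ (from $d=bc$), so $G/B$ is generated by the involutions $\bar{a},\bar{c}$ with
\[
(\bar{a}\bar{c})^4=\overline{(ad)}^{\,4}=\bar{1},
\]
where $(ad)^4=1_G$ follows from your own section formulas: $\psi\left((ad)^2\right)=(b,b)$, hence $\psi\left((ad)^4\right)=(1_G,1_G)$ and injectivity of $\psi$ applies. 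Thus $G/B$ is a quotient of the dihedral group of order $8$, giving $[G:B]\leq 8$ directly.
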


Therefore, we can apply \Cref{theorema_bonito} to obtain:

\begin{corollary}\label{cor_grigo}
	There exists a non-empty strongly aperiodic subshift of finite type defined over the Grigorchuk group.
\end{corollary}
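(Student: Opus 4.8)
The plan is to verify that the Grigorchuk group $G$ satisfies the three hypotheses of \Cref{theorema_bonito} and then invoke that lemma directly. First I would record that $G$ is finitely generated: by its very definition it is generated by the four involutions $a,b,c,d$ exhibited in the Mealy automaton of \Cref{figure_grigori}. Second, I would recall that $G$ has decidable word problem; this is one of its classical properties and can be read off from its self-similar structure, since the automaton description gives an algorithm deciding whether a word in $a,b,c,d$ acts trivially on $\{0,1\}^{\NN}$. Third, the commensurability requirement is supplied verbatim by the cited Lemma~6.9.11 of~\cite{ceccherini-SilbersteinC09}, which asserts that $G$ is commensurable to $G \times G$.

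With these three facts assembled, \Cref{theorema_bonito} applies and immediately produces a non-empty strongly aperiodic $G$-SFT, which is precisely the statement of the corollary. In other words, the entire argument is a hypothesis check followed by a single appeal to the preceding lemma.

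I would stress that there is essentially no genuine obstacle remaining at this point, because all of the substantive work has been pushed upstream. The existence of strongly aperiodic SFTs on triple direct products of finitely generated groups with decidable word problem is \Cref{corollaryaperioci}; the passage from a triple product to $G$ itself is handled by the commensurability transfer of Carroll--Penland (\Cref{teorema_carolpenland}), which is already packaged inside \Cref{theorema_bonito} together with the observation that commensurability of $G$ with $G \times G$ forces commensurability of $G$ with $G \times G \times G$; and the only nontrivial group-theoretic input, namely that the Grigorchuk group absorbs an additional direct factor up to finite index, is exactly the cited lemma. Consequently the proof reduces to confirming that each hypothesis of \Cref{theorema_bonito} holds for $G$, a routine verification, and the conclusion follows without further computation.
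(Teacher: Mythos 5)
Your proof is correct and is essentially identical to the paper's own argument: the paper likewise records that the Grigorchuk group is finitely generated with decidable word problem, cites Lemma~6.9.11 of~\cite{ceccherini-SilbersteinC09} for commensurability with its square, and then applies \Cref{theorema_bonito} to conclude.
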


In fact, a similar argument applies to the much larger class of branch groups. An extensive survey about these groups can be found in~\cite{BartholdiGrigorchuk2003Branchgroups}. There is not a unique definition of these groups, we shall work with the following one:

\begin{definition}
	A group $G$ is called a \define{branch group} if there exist two sequences of groups $(L_i)_{i \in \NN}$ and $(H_i)_{i \in \NN}$ and a sequence of positive integers $(k_i)_{i \in \NN}$ such that $k_0 = 1$, $G = L_0 = H_0$ and:
	\begin{enumerate}
		\item $\bigcap_{i \in \NN}{H_i} = 1_G$.
		\item $H_i$ is normal in $G$ and has finite index.
		\item there are subgroups $L_i^{(1)},\dots, L_i^{k(i)}$ of $G$ such that $H_i = L_i^{(1)}\times\dots\times L_i^{k(i)}$ and each of the $L_i^{(j)}$ is isomorphic to $L_i$.
		\item Conjugation by elements of $g$ transitively permutes the factors in the above product decomposition.
		\item $k_{i}$ properly divides $k_{i+1}$ and each of the factors $L_i^{(j)}$ contains $k_{i+1}/k_i$ factors $L_{i+1}^{(j')}$.
	\end{enumerate}
\end{definition}

In particular, the first and second condition say that these are residually finite groups. The third and fifth conditions imply that they are infinite. Note that there are examples where these groups might not be finitely generated~\cite{BartholdiGrigorchuk2003Branchgroups}~Proposition 1.22, or where they might have undecidable word problem, see~\cite{Grigorchuk1984} or~\cite{BartholdiGrigorchuk2003Branchgroups}~Theorem 3.1. We can characterize all finitely generated recursively presented branch groups which have decidable word problem by the existence of strongly aperiodic SFTs. What is more, we only use the second, third and fifth conditions in our proof.

\begin{theorem}\label{teorema_branch}
	Let $G$ be a finitely generated and recursively presented branch group. Then $G$ has decidable word problem if and only if there exists a non-empty strongly aperiodic $G$-SFT.
\end{theorem}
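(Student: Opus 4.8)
The statement is a biconditional; the reverse implication is immediate from the machinery already assembled, while the forward implication is the substantial direction and follows the same template used for the Grigorchuk group in \Cref{theorema_bonito}.

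For the reverse implication, suppose $G$ admits a non-empty strongly aperiodic SFT. Every SFT is effectively closed, since a finite list of forbidden patterns is trivially a recursively enumerable set of pattern codings. Thus $G$ possesses a non-empty, effectively closed, strongly aperiodic subshift, and because $G$ is recursively presented, \Cref{Lemma_ABT} forces the word problem of $G$ to be decidable. No branch structure is needed here.

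For the forward implication, the plan is to exhibit a finite index subgroup of $G$ isomorphic to a direct product of three infinite, finitely generated groups with decidable word problem, and then transfer the conclusion of \Cref{corollaryaperioci} back to $G$ through commensurability. In line with the remark preceding the statement, I would use only conditions (2), (3) and (5) of the branch group definition. Condition (5) makes the sequence $(k_i)$ strictly increasing, so I fix an index $i$ with $k_i \geq 3$; for instance $i = 2$ works, since $k_0 = 1$ forces $k_1 \geq 2$ and hence $k_2 \geq 4$. By condition (3) the finite index normal subgroup $H_i$ (condition (2)) decomposes as an internal direct product $H_i = L_i^{(1)} \times \cdots \times L_i^{(k_i)}$ with each factor isomorphic to $L_i$, so abstractly $H_i \cong L_i^{k_i}$.

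The key verification is that $L_i$ is an infinite, finitely generated group with decidable word problem. Since a branch group is infinite and $H_i$ has finite index, $H_i \cong L_i^{k_i}$ is infinite, so $L_i$ is infinite; and finite index subgroups of finitely generated groups are finitely generated, so $H_i$ is finitely generated, which forces the direct factor $L_i$ to be finitely generated as well. For the word problem I would observe that each $L_i^{(j)}$ is a finitely generated subgroup of $G$, and any finitely generated subgroup of a group with decidable word problem again has decidable word problem: fix a finite generating set written as words in the generators of $G$ and decide triviality by substitution into the algorithm for $G$. As decidability of the word problem is an isomorphism invariant, $L_i$ inherits it. Grouping the $k_i \geq 3$ factors as $L_i \times L_i \times L_i^{k_i - 2}$ then exhibits $H_i$ as a direct product of three infinite, finitely generated groups with decidable word problem, so \Cref{corollaryaperioci} applies and $H_i$ is not weakly periodic. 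Finally $H_i$, being a finite index subgroup, is commensurable to $G$, and both are finitely generated, so the contrapositive of \Cref{teorema_carolpenland} yields that $G$ is not weakly periodic either, i.e. $G$ admits a non-empty strongly aperiodic SFT.

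I expect the main obstacle to be the bookkeeping around the word problem for the factor $L_i$: one must resist appealing to any effectivity of the branch decomposition itself, since the subgroups $L_i^{(j)}$ are only asserted to exist and need not be given algorithmically. The point is that existence of a finite generating set for $L_i^{(j)}$ suffices to hardcode an algorithm reducing its word problem to that of $G$, combined with the isomorphism-invariance of the property. Everything else — the reduction to a triple product via $k_i \geq 3$ and the commensurability transfer — is a direct application of \Cref{corollaryaperioci} and \Cref{teorema_carolpenland}.
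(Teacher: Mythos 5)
Your proposal is correct and follows essentially the same route as the paper: the reverse implication via \Cref{Lemma_ABT}, and the forward implication by taking the finite index subgroup $H_2 = L_2^{(1)} \times \cdots \times L_2^{(k_2)}$ with $k_2 \geq 4$, checking each $L_2^{(j)}$ is infinite, finitely generated and has decidable word problem, applying \Cref{corollaryaperioci}, and transferring back to $G$ via \Cref{teorema_carolpenland}. Your explicit grouping $L_i \times L_i \times L_i^{k_i-2}$ merely spells out a step the paper leaves implicit.
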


\begin{proof}
	As $G$ is recursively presented, \Cref{Lemma_ABT} implies that if $G$ admits a non-empty strongly aperiodic $G$-SFT then it has decidable word problem. Conversely, let $(L_i)_{i \in \NN}$, $(H_i)_{i \in \NN}$ and $(k_i)_{i \in \NN}$ be the sequences associated to $G$. Recall that $H_i = L_i^{(1)} \times \dots \times L_i^{(k_i)}$ where each $L_i^{(j)}$ is isomorphic to $L_i$. As $H_i$ is a finite index subgroup of $G$, it is infinite and finitely generated. Moreover, as each $H_i$ is a finite direct product of $L_i$, each $L_i$ is also infinite and finitely generated. As any finitely generated subgroup of a group with decidable word problem also has decidable word problem, we conclude that $L_i$ is an infinite, finitely generated group with decidable word problem.
	
	By definition $H_2 = L_2^{(1)} \times \dots \times L_2^{(k_2)}$ and as $k_{i}$ properly divides $k_{i+1}$ we have that $k_2 \geq 4$. Applying~\Cref*{corollaryaperioci} shows that $H_2$ admits a non-empty strongly aperiodic $H_2$-SFT. Finally, as $H_2$ is a subgroup of finite index of $G$, \Cref{teorema_carolpenland} implies that $G$ admits a non-empty strongly aperiodic $G$-SFT.\end{proof}

Examples of finitely generated branch groups with decidable word problem can be found in~\cite{BartholdiGrigorchuk2003Branchgroups}. The Grigorchuk group is an example, but general $\mathsf{G}$-groups and $\mathsf{GGS}$-groups and more generally, finitely generated spinal groups given by recursive sequences also satisfy those properties and thus admit non-empty strongly aperiodic SFTs.

\section*{Acknowledgments} %
The author wishes to thank two anonymous referees who suggested style improvements and corrected several typos. This research was mainly carried out while the author was affiliated to ENS de Lyon in France.

\bibliographystyle{amsplain}

\begin{dajauthors}
\begin{authorinfo}[pgom]
  Sebasti\'an Barbieri\\
  Department of Mathematics\\
  University of British Columbia\\
  Vancouver, Canada\\
  sbarbieri\imageat{}math\imagedot{}ubc\imagedot{}ca \\
  \url{https://www.math.ubc.ca/\textasciitilde sbarbieri}
\end{authorinfo}
\end{dajauthors}

\end{document}